\documentclass{amsart}

\usepackage{hyperref}
\usepackage{amsfonts,amsmath,amssymb,amsthm,amscd}
\usepackage{graphicx}
\usepackage{tikz}
\usepackage{pgfplots}
\usepackage{todonotes}

\newcounter{mainTheorem}
\newtheorem{theorem}{Theorem}[section]

\newtheorem{maintheorem}[mainTheorem]{Theorem}
\newtheorem{lemma}[theorem]{Lemma}
\newtheorem{proposition}[theorem]{Proposition}
\newtheorem{corollary}[theorem]{Corollary} 
\newtheorem{definition}[theorem]{Definition}
\newtheorem{remark}[theorem]{Remark}

\newtheorem{example}[theorem]{Example}

\newcommand\Acal{{\mathcal A}}
\newcommand\Bcal{{\mathcal B}}
\newcommand\Ccal{{\mathcal C}}
\newcommand\Fcal{{\mathcal F}}

\newcommand\Ical{{\mathcal I}}

\newcommand\Kcal{{\mathcal K}}
\newcommand\Qcal{{\mathcal Q}}
\newcommand\Gcal{{\mathcal G}}
\newcommand\Lcal{{\mathcal L}}
\newcommand\Pcal{{\mathcal P}}
\newcommand\Tcal{{\mathcal T}}
\newcommand\Scal{{\mathcal S}}
\newcommand\dd{{\bf d}}
\newcommand\ee{{\bf e}}
\newcommand\ff{{\bf f}}
\newcommand\uu{{\bf u}}
\newcommand\vv{{\bf v}}
\newcommand\RR{{\mathbb R}}
\newcommand\CC{{\mathbb C}}
\newcommand\ZZ{{\mathbb Z}}
\newcommand\Ztwo{{\mathbb Z}/2{\mathbb Z}}
\newcommand\FF{{\mathbb F}}

\newcommand\Pfrak{{\mathfrak P}}

\newcommand\qbv{{q}}
\newcommand\abv{{a}}
\newcommand\pj{{\rm proj}}
\DeclareMathOperator{\Image}{Im}

\DeclareMathOperator{\rank}{rank}
\DeclareMathOperator{\supp}{supp}
\DeclareMathOperator{\Hom}{Hom}
\DeclareMathOperator{\Sal}{Sal}
\DeclareMathOperator{\OS}{OS}
\DeclareMathOperator{\BC}{BC}

\newcommand{\comment}[1]{}

\begin{document}
\title{Filtrations of tope spaces of oriented matroids}
\author{Kris Shaw and Chi Ho Yuen} 

\maketitle

\begin{abstract}
We compare three filtrations of the tope space of an oriented matroid. 
The first is the dual Varchenko--Gelfand degree filtration, the second filtration is from Kalinin's spectral sequence, and the last one derives from Quillen's augmentation filtration. We show that all three filtrations and the respective maps coincide over $\mathbb{Z}/ 2\mathbb{Z}$. 

We also show that the dual Varchenko--Gelfand degree filtration can be made into a filtration of the $\mathbb{Z}$-sign cosheaf on the fan of the underlying matroid.
This was previously carried out with $\mathbb{Z}/ 2\mathbb{Z}$-coefficients by the first author and Renaudineau using the Quillen filtration and has applications to real algebraic geometry via patchworking.
\end{abstract}

\tableofcontents

\section{Introduction}

The {\em tope space} of a real hyperplane arrangement $\Acal$ is the homology of its complement. While the complement simply consists of disjoint polyhedra, the interaction with the combinatorics and (complexified) geometry of $\Acal$ leads to interesting filtrations of the tope space.
By the Folkman--Lawrence Topological Representation Theorem \cite{FolkmanLawrence}, every oriented matroid can be realised by a {\em pseudohyperplane arrangement}, and the above notions can be generalised to the setting of {\em oriented matroids}.
The topes of an oriented matroid are in correspondence with the connected components of the complement of the pseudoarrangement, and the role of the complexification of the complement is played by the {\em Salvetti complex}.

In this paper, we start with studying the $\mathbb{Z}/2 \mathbb{Z}$-vector space generated by the topes of an oriented matroid, which we call the {\em $\mathbb{Z}/2 \mathbb{Z}$-tope space}. 
In the first part of this paper we show the equality of three filtrations of the $\Ztwo$-tope space of an oriented matroid in the literature. 
The first filtration is the mod $2$ reduction of the dual of the {\em Varchenko--Gelfand degree filtration} $\Pcal_\bullet$ on the $\ZZ$-tope space \cite{GV87}. This filtration comes from the ring of Heaviside functions, and we denote its mod $2$ reduction by $\overline{\Pcal}_\bullet$. The second filtration is the {\em Kalinin filtration} $\Kcal_\bullet$ \cite{Kalinin}, which is induced by a spectral sequence on a topological space with involution. The third filtration, denoted $\mathcal{Q}_{\bullet}$, is derived from {\em Quillen's augmentation filtration} on initial matroids; it was introduced by Renaudineau and the first author in \cite{RS}, and further developed by Rau, Renaudineau, and the first author in \cite{RRS2}.
We briefly state our first main result before describing the filtrations in more detail.

\begin{maintheorem} \label{mainthm:Z2}
Let $M$ be an oriented matroid.
The filtrations of the $\mathbb{Z}/2\mathbb{Z}$--tope space $\Qcal_{\bullet}(M), \Kcal_{\bullet}(M), $ and $\overline{\Pcal}_{\bullet}(M)$ all coincide.
\end{maintheorem}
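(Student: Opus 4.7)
The plan is to split the statement into two equalities, $\Qcal_\bullet(M) = \overline{\Pcal}_\bullet(M)$ and $\Kcal_\bullet(M) = \overline{\Pcal}_\bullet(M)$, and to use the mod $2$ Orlik--Solomon algebra of $M$ as a common bridge. Each filtration originates in a different framework (a combinatorial function ring, a topological spectral sequence, and a group-algebra augmentation), so the first step is to normalise them: I would give each $F_i$ an explicit set of generators in the $\Ztwo$-tope space indexed by independent sets of $M$ of size $i$, so that comparisons reduce to verifying that corresponding generators match.

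For $\Qcal_\bullet = \overline{\Pcal}_\bullet$, I would exploit the perfect pairing between the $\Ztwo$-tope space and the mod $2$ Heaviside ring. The filtration $\overline{\Pcal}_\bullet$ is by definition the annihilator filtration of the degree filtration on the Heaviside ring. The Quillen filtration was shown in \cite{RS,RRS2} to be generated, in each degree, by classes of the form $\sum_T \varepsilon(T) T$, where $T$ runs over topes and $\varepsilon$ encodes signs from a choice of initial oriented matroid; these classes are naturally indexed by mod $2$ Orlik--Solomon monomials. I would evaluate a typical Heaviside monomial of degree $>i$ on such a class and verify, via the axioms of covector signs, that the pairing vanishes, giving the inclusion $\Qcal_\bullet \subseteq \overline{\Pcal}_\bullet$; a dimension count using the known graded dimensions of both filtrations (in terms of the Whitney numbers of $M$) then forces equality.

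For $\Kcal_\bullet = \overline{\Pcal}_\bullet$, I would compute the Kalinin spectral sequence on the Salvetti complex $\Sal(M)$ with its canonical $\Ztwo$-action (the oriented-matroid analogue of complex conjugation). The $E_1$-page can be identified with the cellular cohomology of the fixed-point set, and the successive differentials $d_r$ are Bockstein-type operators. The key computation is to show that these differentials correspond, under the pairing above, to multiplication by the Heaviside indicator functions; under this identification the Kalinin filtration, defined as the filtration by kernels of successive differentials, becomes exactly the annihilator of the degree filtration on the Heaviside ring, i.e.\ $\overline{\Pcal}_\bullet$.

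The main obstacle is the Kalinin equality, because its definition is intrinsically topological and the differentials of the spectral sequence must be made combinatorially explicit on the Salvetti complex. If a direct spectral-sequence computation proves unwieldy, a safer backup is to show that all three filtrations satisfy the same characterising properties—namely, they are natural under deletion and contraction, they agree in rank at most one, and their associated graded pieces realise the mod $2$ Orlik--Solomon algebra—so that an induction on the rank of $M$ forces them to coincide. The naturality of $\Qcal_\bullet$ and $\overline{\Pcal}_\bullet$ under minors is standard; establishing the analogous naturality of $\Kcal_\bullet$ via the behaviour of $\Sal(M)$ under deletion--contraction would then be the technical heart of the argument.
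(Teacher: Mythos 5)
Your plan for $\Qcal_\bullet \subseteq \overline{\Pcal}_\bullet$ (pair Quillen generators against Heaviside monomials, then count dimensions) is essentially the paper's route, though the paper factors through a useful intermediate step: it first checks that \emph{affine coordinate chains} (those parallel to a span of $\dd_i$'s) lie in $\Pcal_p$ (Proposition~\ref{prop:prefix_to_coor}), and separately proves a pure linear-algebra fact over $\Ztwo$ that arbitrary affine subspaces are $\Ztwo$-linear combinations of coordinate ones (Proposition~\ref{prop:coor_to_all}); your ``evaluate covector signs'' sketch would need this or an equivalent reduction to become a proof. You should also note that the paper never proves two separate \emph{equalities}; it proves a chain of \emph{inclusions} $\Qcal_p \subseteq \overline{\Pcal}_p \subseteq \Kcal_p$ and then applies a single dimension count using $\dim\Qcal_p/\Qcal_{p+1}=b_p=\dim\Kcal_p/\Kcal_{p+1}$ (Theorem~\ref{thm:sign_cosheaf_dim} and Proposition~\ref{prop:Kalinin}). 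That organization halves the work.

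The real gap is in your treatment of the Kalinin filtration, and it is not merely a matter of detail. You propose identifying the Kalinin spectral-sequence differentials with Bockstein-type or Heaviside-multiplication operators and then reading off the filtration as successive kernels. But Proposition~\ref{prop:Kalinin} shows the differentials $d^r$ are \emph{identically zero}: since $|\Tcal(M)| = \dim H_\bullet(\Sal_M;\Ztwo)$ (Zaslavsky's theorem), the Salvetti complex is maximal in the Smith--Thom sense, so the spectral sequence degenerates at $E^1$. There is no nontrivial ``kernel of differentials'' picture to compare against the Heaviside degree filtration. The filtration $\Kcal_\bullet$ lives on the $H_0$ of the fixed locus (the tope space), and $\gamma\in\Kcal_p$ is witnessed by an explicit tower of chains $\beta_1,\dots,\beta_p$ with $\partial\beta_1 = i_*\gamma$ and $\partial\beta_i = \beta_{i-1}+\overline{\beta_{i-1}}$. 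The paper's Theorem~\ref{thm:Kalinin_prefix} constructs these $\beta_i$ directly (as sums of Salvetti cells $Z(T_\vv\setminus F_i, T_\uu)$, i.e.\ the $p$-bricks of Definition~\ref{def:pbricks}) for each prefix chain, by induction on $p$, with a careful case analysis of which boundary cells cancel. That explicit cell-level construction is what is missing from your sketch, and the ``multiplication by Heaviside'' framing does not supply it. Your fallback---characterising all three filtrations by deletion/contraction naturality, agreement in rank $\leq 1$, and associated graded $\cong \OS_\bullet(\underline M;\Ztwo)$---is a coherent alternative strategy, but as you acknowledge the naturality of $\Kcal_\bullet$ under oriented-matroid minors would itself require controlling how $\Sal_M$ and its involution restrict, which is roughly the same amount of topological work as the direct construction and is not addressed here.
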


The Salvetti complex $\Sal_M$ of an oriented matroid \cite{Sal87} is a CW complex playing the role of the complement of a complexified hyperplane arrangement. In fact, when $M$ is the oriented matroid of a real hyperplane arrangement, the Salvetti complex is a deformation retract of (hence homotopic equivalent to) the complexification of the complement. For any oriented matroid $M$, there is an involution on $\Sal_M$ and the fixed locus under this involution consists of a discrete collection of points corresponding to the topes of the oriented matroid.

The Varchenko--Gelfand filtration arises by recognising that the ring of $\ZZ$-valued functions over the $\ZZ$-tope space can be expressed as a quotient of the polynomial ring generated by {\em Heaviside functions} (indicator functions of halfspaces). The degree filtration of the polynomial ring induces a filtration $\Pcal^\bullet$ of said ring of functions, which in turn gives a filtration of the tope space itself by taking duals (or orthogonal complements).
Varchenko proved that the dual degree filtration is equal to the {\em asymptotic filtration} of a real hyperplane arrangement \cite{Var93}, which has applications in Lie theory and singularity theory; see Remark~\ref{rem:asym} for its definition.
Working from the perspective of singularity theory, Denham \cite{Denham_OS} defined an isomorphism $\abv_p: \Pcal_p(M)/ \Pcal_{p+1}(M) \to H_p(\Sal_M; \ZZ)$ for every $p$.
We can take the mod $2$ reduction of everything to obtain a filtration $\overline{\Pcal}_\bullet(M)$ of the $\Ztwo$-tope space and isomorphisms $\overline{\abv}_p :
\overline{\Pcal}_p(M) / \overline{\Pcal}_{p+1}(M) \to H_p(\Sal_M; \Ztwo)$. See also the work of Proudfoot on the relation between the Varchenko--Gelfand rings and Salvetti complexes in an equivariant setting \cite{Pro06}.

The second filtration is the Kalinin filtration $\Kcal_\bullet$. 
The Kalinin filtration of a real algebraic variety, or more generally, a topological space (that is reasonably nice, e.g., a CW complex) with an involution representing complex conjugation, is induced by a spectral sequence that starts with the homology of the whole space over $\Ztwo$ and converges to the homology of the real part of the variety (respectively, the fixed locus of the space).
This spectral sequence can be thought of as a categorification of the Smith--Thom inequality.
As mentioned above, the fixed locus of the involution on $\Sal_M$ is a collection of points corresponding to the topes of $M$. Hence we obtain a filtration of the tope space together with an isomorphism $bv_p : \Kcal_p(M)/\Kcal_{p+1}(M) \to H_p(\Sal_M;\ZZ/2\ZZ)$ for every $p$ from the spectral sequence.

The last filtration derives from Quillen's augmentation filtration \cite{Quillen}, which was originally used in \cite{RS} in the case of orientations of the uniform matroid $U_{n, n+1}$. 
The usual Quillen's augmentation filtration is defined on $(\mathbb{Z}/2\mathbb{Z})[V]$, the group algebra of a $\Ztwo$-vector space $V$.
For a general oriented matroid $M$, denote its underlying matroid by $\underline{M}$. The collection of topes of $M$ {\em adjacent} to a complete flag of flats of $\underline{M}$ has the structure of an affine subspace of $(\mathbb{Z}/2\mathbb{Z})^E$. By choosing an arbitrary origin of this affine space, {we obtain a filtration of its group algebra which is independent of the choice made.
Taking the sum of these filtrations over all complete flags yields a filtration $\Qcal_\bullet$ of the tope space of $M$.
Moreover, \cite{RS} (implicitly) described an isomorphism $\qbv_p$ between $\Qcal_p/\Qcal_{p+1}$ and the $p$-th graded piece $\OS_p(\underline{M};\Ztwo)$ of the dual of the {\em Orlik--Solomon algebra} of $\underline{M}$.

Once we have identified the filtrations of the $\ZZ/2\ZZ$--tope space in Theorem \ref{mainthm:Z2}, we also compare their associated maps. In order to do that, we make use of the (dual of the) isomorphism given by Bj\"{o}rner--Ziegler \cite{BZ92}, denoted by $BZ^\vee$, between $H_p(\Sal_M;\ZZ/2\ZZ)$ and $ \OS_p(\underline{M};\ZZ/2\ZZ)$.

\begin{maintheorem} \label{mainthm:maps}
 As maps from $\Kcal_p(M)=\overline{\Pcal}_p(M)$ to $H_p(\Sal_M;\Ztwo)$, $bv_p,\overline{\abv}_p$  are equal. Furthermore, we have the following commutative diagram
\begin{equation}
\begin{CD}
\overline{\Pcal}_p(M)    @>\overline{\abv}_p>>  H_p(\Sal_M;\Ztwo)\\
@V\parallel VV        @V BZ^\vee VV\\
\Qcal_p(M)      @>\qbv_p>>  \OS_p(\underline{M};\Ztwo)
\end{CD}
\end{equation}
In particular, the three maps descend to the same isomorphism from the $p$-th associated gradeds of the filtrations to $H_p(\Sal_M;\Ztwo)$ up to the isomorphism $BZ^\vee$ as explained above.
\end{maintheorem}

Quillen's filtration applied to $\mathbb{Z}/2\mathbb{Z}$-tope spaces was used by Renaudineau and the first author to obtain a combinatorial spectral sequence converging to the homology of the real part of hypersurfaces arising from Viro's patchworking. This patchworking construction has been generalised to tropical manifolds equipped with real phase structures in \cite{RRS2}. The result of the patchworking is the \emph{real part} of the tropical manifold and a {\em sign cosheaf} on the tropical manifold which computes the homology of this real part; the sign cosheaf of a tropical space equipped with a real phase structure is a cosheaf whose stalks are the tope spaces of oriented matroids.

The above two theorems further suggest that the spectral sequence from \cite{RS}, and more generally \cite{RRS2}, can be thought of as a tropical/combinatorial version of Kalinin's filtration. A similar spectral sequence for real semi-stable degenerations satisfying some conditions was recently found by Ambrosi and Manzaroli using equivariant cohomology and log geometry \cite{ambrosimanzaroli}.

The application of Quillen's filtration to the $\ZZ$-tope space does not stabilise, see Example~\ref{ex:Quillen_no_Z}. 
Moreover, a naive extension of the Kalinin filtration to the $\ZZ$-tope space is not well-behaved, see Remark~\ref{rm:Kalinin}. 
However, the dual Varchenko--Gelfand filtration is defined on the $\ZZ$-tope space of an oriented matroid.
Here we show that this filtration can be turned into a filtration of the $\mathbb{Z}$-sign cosheaf on the polyhedral fan of a matroid. To do this, we first establish the functoriality of the filtration with respect to taking initial matroids. In the following statement, $\Acal_p$ is the dual of the {\em Cordovil algebra} and $\tilde{\abv}_p$ is the map introduced in Proposition~\ref{prop:gamma_epsilon}; see Section~\ref{sec:OS_Cord} and \ref{sec:Cordovil} for details.

\begin{proposition}\label{prop:Z-functorial}
The dual Varchenko--Gelfand filtration is functorial with respect to initial matroids. More precisely, for any pair of flags $\Fcal \subset \Fcal'$ we have $\Pcal_p(M_{\Fcal'})\subset \Pcal_p(M_\Fcal)$ and the following diagram commutes 
\begin{equation} \label{eq:GV_sheaf}
\begin{CD}
\Pcal_p(M_{\Fcal'})     @>\tilde{\abv}_p>> \Acal_p(M_{\Fcal'};\ZZ)\\
@V{\iota}VV        @V{\iota}VV\\
\Pcal_p(M_\Fcal)     @>\tilde{\abv}_p>>   \Acal_p(M_{\Fcal};\ZZ).
\end{CD}
\end{equation}
\end{proposition}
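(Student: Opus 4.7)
The plan is to reduce the proposition to two ingredients: a cosheaf-style inclusion $\iota$ on tope spaces coming from the refinement $\Fcal \subset \Fcal'$, and a compatibility of the Heaviside presentations of $M_\Fcal$ and $M_{\Fcal'}$ under the initial-matroid operation.

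First, I would unpack the vertical map $\iota$ in the combinatorial language of the paper. A tope of $M_{\Fcal'}$ can be identified with a sign vector of $M$ that is adjacent to the refined flag $\Fcal'$, and any such sign vector is automatically adjacent to the shorter flag $\Fcal$; this yields an inclusion of tope sets and hence of free $\ZZ$-modules $\iota : \ZZ[\text{topes}(M_{\Fcal'})] \hookrightarrow \ZZ[\text{topes}(M_\Fcal)]$. Concretely this is the corestriction of the sign cosheaf between the corresponding faces of the matroid fan.

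Second, I would establish the dual compatibility at the level of Heaviside rings. The map dual to $\iota$ is the restriction of functions on topes of $M_\Fcal$ to topes of $M_{\Fcal'}$, and the key claim is that this restriction sends the degree $\leq p$ part of the Varchenko--Gelfand algebra of $M_\Fcal$ onto the degree $\leq p$ part for $M_{\Fcal'}$. The point is that each Heaviside generator associated to a halfspace of $M_{\Fcal'}$ is the restriction of the Heaviside generator on the corresponding halfspace of $M_\Fcal$; the only new phenomenon is that halfspaces which no longer separate any pair of $\Fcal'$-adjacent topes restrict to constants, so they drop out without raising degree. Given this, for $x \in \Pcal_p(M_{\Fcal'})$ and any degree $<p$ Heaviside polynomial $f$ on $M_\Fcal$, one has $\langle \iota(x), f \rangle = \langle x, \bar f\rangle = 0$ where $\bar f$ denotes the restriction; this yields $\iota(\Pcal_p(M_{\Fcal'})) \subseteq \Pcal_p(M_\Fcal)$.

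Finally, the commutativity of the square in~\eqref{eq:GV_sheaf} follows by unwinding the construction of $\tilde{\abv}_p$ from Proposition~\ref{prop:gamma_epsilon}: this map is built out of precisely the Heaviside generators whose behaviour under $\iota$ we have just pinned down, and the right vertical arrow on the dual Cordovil algebra is defined by the same restriction rule at the associated graded level. The main obstacle I expect is the degree-preserving surjectivity in the second step, namely isolating which Varchenko--Gelfand relations are inherited from $\Fcal$ versus newly introduced at $\Fcal'$ and verifying that every generator dropped under restriction is genuinely constant on the smaller tope set rather than merely reducible modulo relations of strictly higher degree. Once this is secured, both the inclusion of filtrations and the commutativity of the diagram are formal.
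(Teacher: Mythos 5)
Your first two steps track the paper's own argument for the inclusion $\Pcal_p(M_{\Fcal'})\subset\Pcal_p(M_\Fcal)$ quite closely: the paper likewise observes that $\Tcal_{\Fcal'}\subset\Tcal_\Fcal$, that restriction sends Heaviside polynomials to Heaviside polynomials of no higher degree (and conversely every Heaviside polynomial on the smaller tope set lifts), so $\Pcal^p(M_{\Fcal'})$ is exactly the image of $\Pcal^p(M_\Fcal)$ under restriction, and the inclusion of the $\Pcal_p$'s follows by taking annihilators. That portion of your plan is sound and is essentially the paper's Proposition on inclusion of tope filtrations.

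The gap is in your step handling commutativity, which you describe as ``formal once the second step is secured.'' It is not, and the reason is that the right-hand vertical arrow $\iota\colon\Acal_p(M_{\Fcal'})\to\Acal_p(M_\Fcal)$ is \emph{not} a restriction map: the paper defines it as the set-theoretic containment of submodules inside ${\rm SFSym}(E)$. Two things then require genuine work. First, one must prove that this containment actually holds, i.e.\ that $\Acal_p(M_{\Fcal'})\subseteq\Acal_p(M_\Fcal)$ as subsets of ${\rm SFSym}(E)$; this is nontrivial because the Cordovil ideals of $M_\Fcal$ and $M_{\Fcal'}$ are quotients by different circuit relations, so there is no naive inclusion or restriction on the $\Acal^p$ side to dualise. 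Second, one must show that the resulting containment makes the square commute with $\tilde{a}_p$. The paper does both by (i) a flag-lifting lemma showing that every complete flag $\Gcal$ of $\underline{M}_{\Fcal'}$ gives rise to a complete flag of $\underline{M}_\Fcal$ with the same collection of consecutive difference sets $G_i\setminus G_{i-1}$, so that a prefix chain $\gamma_{\Gcal,\vv,p}$ of $M_{\Fcal'}$ can be reinterpreted as an affine coordinate chain of $M_\Fcal$, and (ii) an explicit computation of $\tilde{a}_p$ on affine coordinate chains (Proposition~\ref{prop:affine_image}) which identifies the image as the same element $\epsilon_{\Gcal,\vv,p}\in{\rm SFSym}(E)$ under both $\tilde{a}_p$ maps. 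Your proposal contains neither ingredient, and ``the map is built out of Heaviside generators'' does not by itself yield either one. In fact, if you instead \emph{define} the Cordovil-side map abstractly by transporting the restriction on $\Pcal^p/\Pcal^{p-1}$ through the isomorphism $\Acal^p\cong\Pcal^p/\Pcal^{p-1}$ and dualising, commutativity does become a tautology — but then you have not shown that this abstract map coincides with the geometric containment inside ${\rm SFSym}(E)$, which is precisely what the cosheaf structure in Theorem~\ref{mainthm:Z_cosheaf} needs. So the obstacle you single out (degree-preserving surjectivity in the restriction step) is in fact the easy part, while the part you call formal is where the real content lies.
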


The fan of a matroid has cones corresponding to the flags of flats of the matroid, hence to its initial matroids. 
The $\ZZ$-sign cosheaf on a matroid fan of an oriented matroid associates to each face the $\ZZ$-tope space of the initial matroid corresponding to the face. 
The cosheaf maps are inclusions of tope spaces.

Using the above proposition, we can then establish the following theorem for the $\ZZ$-sign cosheaf of an oriented matroid, which is an extension of the $\Ztwo$-sign cosheaf from \cite{RRS2}. We let $\Pfrak_p$ denote a cosheaf formed by the dual Varchenko-Gelfand filtration.   Notice that in the theorem below, we  replace the Orlik--Solomon algebra with the {\em Cordovil algebra}. 

\begin{maintheorem} \label{mainthm:Z_cosheaf}
The integral dual Varchenko--Gelfand filtration produces a filtration of the $\mathbb{Z}$-sign cosheaf on the fan (of the underlying matroid) of an oriented matroid,
$$\Pfrak_d  \subset  \dots \subset \Pfrak_{p+1} \subset \Pfrak_p \subset \dots \subset \Pfrak_1 \subset \mathcal{S}.$$
Moreover, for every $p$ there is a short  exact sequence of cosheaves 
$$0 \to \Pfrak_{p+1} \to \Pfrak_p \to \mathfrak{A}_p \to 0,$$ where $\mathfrak{A}_p$ is the $p$-th {\em Cordovil cosheaf}. 

\end{maintheorem}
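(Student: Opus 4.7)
The plan is to read Proposition~\ref{prop:Z-functorial} as supplying exactly the data needed to upgrade the stalkwise Varchenko--Gelfand filtration $\Pcal_\bullet(M_\Fcal)$ to a filtration of cosheaves on the matroid fan. Recall that a cone of the fan corresponds to a flag of flats $\Fcal$, that $\Fcal \subset \Fcal'$ means the cone of $\Fcal'$ is a face of the cone of $\Fcal$, and that the $\ZZ$-sign cosheaf $\mathcal{S}$ has stalks $\mathcal{S}(\Fcal) = \Pcal_0(M_\Fcal)$ with corestrictions given by tope-space inclusions $\Pcal_0(M_{\Fcal'}) \hookrightarrow \Pcal_0(M_\Fcal)$.

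For each $p$, define $\Pfrak_p$ stalkwise by $\Pfrak_p(\Fcal) := \Pcal_p(M_\Fcal)$, viewed as a subspace of $\mathcal{S}(\Fcal)$. The left column of \eqref{eq:GV_sheaf} in Proposition~\ref{prop:Z-functorial} says exactly that the corestrictions of $\mathcal{S}$ restrict to maps $\Pfrak_p(\Fcal') \to \Pfrak_p(\Fcal)$, making $\Pfrak_p$ a subcosheaf of $\mathcal{S}$; the nested chain $\Pfrak_d \subset \cdots \subset \Pfrak_1 \subset \mathcal{S}$ is then immediate from the stalkwise chain $\Pcal_d(M_\Fcal) \subset \cdots \subset \Pcal_1(M_\Fcal) \subset \Pcal_0(M_\Fcal)$. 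Similarly, I would define the dual Cordovil cosheaf $\mathfrak{A}_p$ by $\mathfrak{A}_p(\Fcal) := \Acal_p(M_\Fcal;\ZZ)$ with corestrictions given by the right column of \eqref{eq:GV_sheaf}; the cosheaf axioms reduce to commutativity of \eqref{eq:GV_sheaf} applied to composable pairs of flag inclusions, which follows from the left column together with the naturality of $\tilde{\abv}_p$.

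Finally, by the construction of $\tilde{\abv}_p$ in Section~\ref{sec:Cordovil}, the stalkwise map fits into a short exact sequence $0 \to \Pcal_{p+1}(M_\Fcal) \to \Pcal_p(M_\Fcal) \to \Acal_p(M_\Fcal;\ZZ) \to 0$, and the commutativity of \eqref{eq:GV_sheaf} says these stalkwise sequences intertwine the corestrictions on both sides. They therefore assemble into a short exact sequence of cosheaves $0 \to \Pfrak_{p+1} \to \Pfrak_p \to \mathfrak{A}_p \to 0$. The main obstacle is really packaged into Proposition~\ref{prop:Z-functorial}: once the compatibility of $\Pcal_p$ and $\tilde{\abv}_p$ with passing to initial matroids is in hand, the promotion to cosheaves is a formal exercise. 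The one subtle point I would verify carefully is transitivity of the corestrictions on $\mathfrak{A}_p$ along a chain $\Fcal \subset \Fcal' \subset \Fcal''$, but this reduces to the corresponding transitivity for $\Pfrak_p$ together with the stalkwise isomorphism induced by $\tilde{\abv}_p$.
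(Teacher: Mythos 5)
Your proposal is correct and follows essentially the same route as the paper: the key content is packaged into Proposition~\ref{prop:Z-functorial} (which the paper proves via Lemma~\ref{lem:CD_for_MF}, Proposition~\ref{prop:tope_filtration_incl}, and Proposition~\ref{prop:Cordovil_incl}), and the promotion to a filtration of cosheaves and a short exact sequence of cosheaves is, as you say, a formal assembly once stalkwise functoriality of $\Pcal_p$, $\Acal_p$, and $\tilde{\abv}_p$ with respect to initial matroids is established. The only small simplification over your last paragraph is that transitivity of the corestrictions on $\mathfrak{A}_p$ does not need to be deduced from $\Pfrak_p$ via $\tilde{\abv}_p$: the maps $\iota^{\Acal}$ are literally inclusions of submodules of the fixed ambient ${\rm SFSym}(E)$, so transitivity is automatic, which is exactly the argument the paper gives.
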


Via the spectral sequence of the above filtration, the Cordovil cosheaves on a tropical manifold with a real phase structure provide information on the $\ZZ$-homology groups of the real part. In future work, it will be interesting to compare the homology of these Cordovil cosheaves with the {\em $\ZZ$-tropical homology cosheaves} \cite{IKMZ}.

\subsection*{Acknowledgement}
Both authors were supported by the Trond Mohn Foundation project ``Algebraic and Topological Cycles in Complex and Tropical Geometries'', and acknowledge the support of the Centre for Advanced Study (CAS) in Oslo, Norway, which funded and hosted the Young CAS research project ``Real Structures in Discrete, Algebraic, Symplectic, and Tropical Geometries'' during the 2021/2022 and 2022/2023 academic years.

Both authors thank Nick Proudfoot for pointing out Cordovil algebra as the better-behaved object for Theorem~\ref{mainthm:Z_cosheaf}, and Graham Denham for providing helpful references; they are also grateful to the anonymous referee whose comments and suggestions helped us to improve this paper. 
CHY was also supported by the Danish National Research Foundation project DNRF151 and the Ministry of Science and Technology of Taiwan project MOST 113-2115-M-A49-004-MY2 during his affiliation to the University of Copenhagen and National Yang Ming Chiao Tung University, respectively; he would like to thank Henry Tsang for the numerous conversations, Galen Dorpalen-Barry for the discussion on this project, and Basile Coron for the explanation on connections with operad theory.

\section{Background}

\subsection{Oriented Matroids}
We follow the conventions for oriented matroids from \cite{BLSWZ} and we mostly use the covector description of oriented matroids \cite[Section 4]{BLSWZ}.
 The {\em covectors} of an oriented matroid $M$ are elements of $\{0, +, -\}^E$ where $E$ is the ground set of $M$. See \cite[Section 4.1]{BLSWZ} for the set of axioms for covectors. 
The partial ordering of the covectors of $M$ is induced via the relation $0 < \pm$, and $+$ and $-$ are incomparable. 
The covector lattice of an oriented matroid $M$ is denoted by $\mathcal{L}(M)$, where we drop the $M$ when the oriented matroid is clear from context.

When an oriented matroid is realised by a real hyperplane arrangement consisting of hyperplanes indexed by $E$, the covectors are precisely the sign vectors encoding the relative position of the points in the ambient space (see Example~\ref{ex:CovectorsU23}): we fix a positive halfspace for the $i$-th hyperplane, and record which side a point is in (including the case of being on the hyperplane itself) by the $i$-th coordinate of the covector.
The hyperplanes provide a cellular decomposition of the ambient space, and the collection of covectors index the cells.
By the Folkman--Lawrence Topological Representation Theorem \cite{FolkmanLawrence}, every oriented matroid arises from a {\em pseudohyperplane arrangement}, which allows us to extend the above intuition to all oriented matroids.
The topes of an oriented matroid are in correspondence with the connected components of the complement of the pseudoarrangement and all points of the ambient space can be assigned a covector upon choosing a positive side of each pseudohyperplane.

\begin{example}\label{ex:CovectorsU23} \rm
The real hyperplane arrangement in Figure~\ref{diag:U23} consists of three hyperplanes $\ell_1,\ell_2,\ell_3$. 
For each hyperplane, the positive halfspace is pointed out by the arrows in the figure. 
The arrangement defines an oriented matroid with 13 covectors as labeled by letters. Some examples as sign vectors are $T_1=+++, T_3=--+, \zeta_2=-0+,\zeta_4=0--, O=000$. The underlying matroid is $U_{2,3}.$
\end{example}

\begin{figure}[]
\begin{center}
    \includegraphics[scale=0.35]{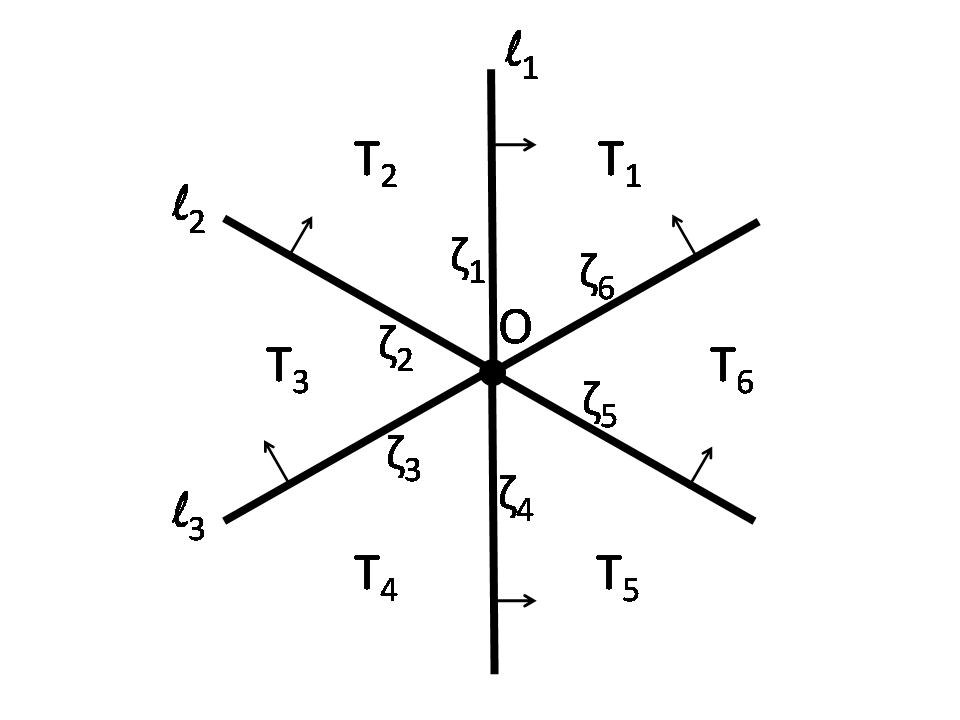}
  \label{diag:U23}
  \end{center}
  \vspace{-0.5cm}
  \caption{A real hyperplane arrangement realising $U_{2,3}$.}
\end{figure}

Every oriented matroid $M$ has an underlying matroid $\underline{M}$. The \emph{flats}  of $\underline{M}$  are obtained from the covectors of $M$ by considering $E \backslash \supp(L)$ over all covectors $L \in \mathcal{L}(M)$, here $\supp(L)\subset E$ is the support of the vector. We will sometimes speak of flats of the oriented matroid $M$, meaning flats of $\underline{M}$.

We also make use of the rank function of the underlying matroid which we denote by $\rank \colon 2^E \to \mathbb{Z}_{\geq 0}$.
A \emph{circuit} of $\underline{M}$ is a subset $C\subset E$ such that $\rank(C\setminus i)=\rank(C)=|C|-1$ for any $i\in A$. An oriented matroid $M$ can as well be described by its collections of {\em signed circuits}, which are elements of $\{+,-,0\}^E$ supported on circuits of $\underline{M}$ \cite[Section~3.2]{BLSWZ}. A \emph{loop} of $\underline{M}$ (hence $M$) is an element $i\in E$ such that $\rank(\{i\}) = 0$.
Throughout we assume that the oriented matroid $M$ is loopless. 

There are operations of deletion and contraction on oriented matroids.
If $M$ is the oriented matroid on ground set $E$ obtained from an pseudohyperplane arrangement then for $i \in E$, the deletion  $M \backslash i$ is the oriented matroid arising from removing the $i$-th pseudohyperplane from the arrangement. The contraction $M / i$ is the oriented matroid obtained by restricting the pseudohyperplane arrangement to the $i$-th sphere. The deletion and contraction operations can be defined for arbitrary subsets of $E$. Moreover, the restriction of $M$ to $I \subset E$ is the oriented matroid $M|_I = M \backslash (E \backslash I)$. The operations of deletion and contraction commute which allow us to define initial oriented matroids. 

\begin{definition}
Let $\Fcal:\emptyset=F_0 \subsetneq \ldots \subsetneq  F_t=E$ be a flag of flats of a loopless oriented matroid $M$.
The {\em initial oriented matroid} $M_{\Fcal}$ with respect to the flag is the (necessarily loopless) oriented matroid 
\begin{equation} \label{def:IM}
    \bigoplus_{i=0}^{t-1} M|_{F_{i+1}}/F_i,
\end{equation}
with the same ground set $E$.
\end{definition}

Since we assume that $M$ is loopless, the covectors of $M$ that are maximal with respect to the partial order are contained in $\{+, -\}^E$ and are called the \emph{topes} of $M$. The set of topes of $M$ is denoted by \[\mathcal{T}(M) \subseteq \{+, -\}^E.\] 

Let $S\subset E$, the restriction of a tope $T$ of $M$ to $\{+,-\}^S$ is always a tope of $M|_S$, and also a tope of $M/(E\setminus S)$ as long as $S$ is a flat. 
For a tope $T$ of $M$ and a subset $F\subseteq E$, the sign vector $T\setminus F$ is obtained from $T$ by setting all coordinates in $F$ to $0$. More precisely, 

$$ (T \setminus F)_e = 
\begin{cases} 
T_e \text{ \ \  if } e \not \in F \\
 0\text{ \ \ \  if } e \in F.
 \end{cases}
 $$ 
This is not to be confused with the operation of matroid deletion.

The composition operation $\circ$ on covectors $L$ and $K$ of $M$ is defined by 
$$(L \circ K)_e = 
\begin{cases} 
L_e \text{ \ \  if } L_e \neq 0\\
 K_e \text{ \ \ \  if } L_e = 0 .
 \end{cases}
$$
The tuple $L \circ K \in \{0, +, -\}^E$ is another covector of $M$.
Hence, $L \circ T$ is always a tope of $M$ for any covector $L$ and tope $T$.

\subsection{Salvetti Complexes}

Let $M$ be an oriented matroid on ground set $E$ of rank $d$.
Recall that the complement of the support $\supp(L)$ of a covector $L$ is a flat of the underlying matroid, and we define the dimension of a covector as the rank of such a flat.

The {\em Salvetti complexes} are topological models for the ``complexification'' of $M$: as stated in the introduction, when $M$ is an oriented matroid realised by a real hyperplane arrangement, the Salvetti complexes are deformation retracts of the complement of the complexification of the arrangement \cite{DDP_Sal, Sal87}; see also the alternative proof for simplicial arrangement case due to Paris \cite{Paris93}.

We describe two realisations of the Salvetti complex.
The finer version describes the complex as a simplicial complex.

\begin{definition}
The {\em fine Salvetti complex} $\widetilde{\Sal}_M$ of $M$ is the order complex of the {\em Salvetti poset} $$\{w(L,T):L\in\Lcal, T\in\Tcal, L\leq T\},$$
where $w(L,T)\leq w(L', T')$ if $L'\leq L$ and $T=L\circ T'$.
\end{definition}

In this paper, we most often use a coarser description of the space as a regular CW complex, which we denote as the (coarse) Salvetti complex.

\begin{proposition}\label{prop:salComplex} \cite[Proposition 5.50]{OT_book}
For every $L\leq T$, denote by $Z(L,T)$ the restriction of the fine Salvetti complex to the vertex set $\{w(L',L'\circ T):L'\geq L\}$.
Then $Z(L,T)$ is homeomorphic to a ball of dimension $\dim L$, whose boundary is equal to $\bigcup_{L'>L} Z(L',L'\circ T)$.
Furthermore, the collection of cells $\{Z(L,T):L\leq T\}$ forms a regular CW complex $\Sal_M$ homeomorphic to the fine Salvetti complex.
\end{proposition}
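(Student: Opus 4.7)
The plan is to first recognize $Z(L,T)$ combinatorially as the order complex of an upper interval in the covector lattice $\mathcal{L}(M)$, next apply the Folkman--Lawrence Topological Representation Theorem to see this order complex is a ball, and finally verify that the cells assemble into a regular CW decomposition homeomorphic to the fine Salvetti complex.

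For the combinatorial identification, I observe that the map $L' \mapsto w(L', L' \circ T)$ bijects $\{L' \in \mathcal{L}(M) : L' \geq L\}$ with the vertex set of $Z(L,T)$. For $L_1', L_2' \geq L$, the Salvetti order relation $w(L_1', L_1' \circ T) \leq w(L_2', L_2' \circ T)$ unpacks to $L_2' \leq L_1'$ together with $L_1' \circ T = L_1' \circ (L_2' \circ T)$, and the second condition is automatic because $L_2' \leq L_1'$ forces $L_1' \circ L_2' = L_1'$. Hence the subposet of the Salvetti poset underlying $Z(L,T)$ is anti-isomorphic to the upper interval $\{L' \geq L\}$, and since the order complex coincides with that of the opposite poset, $Z(L,T)$ equals the order complex of $\{L' \geq L\}$ in $\mathcal{L}(M)$.

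To see this order complex is a $\dim L$-ball, I would invoke the Folkman--Lawrence Topological Representation Theorem, which identifies $\mathcal{L}(M) \setminus \{\hat{0}\}$ with the face poset of a regular CW decomposition of $S^{d-1}$ (with $d = \mathrm{rank}(M)$), under which the cell for $L$ has sphere-dimension $d - 1 - \dim L$. The proper upper interval $\{L' > L\}$ is then the face poset of the link of this cell, whose order complex realizes a sphere of dimension $\dim L - 1$. Adjoining $L$ turns this into the cone over the link with apex $L$, so the order complex of $\{L' \geq L\}$ is a ball of dimension $\dim L$. The boundary sphere decomposes into the closed cells of the link, which under the identification in the previous paragraph become the sub-balls $Z(L', L' \circ T)$ for $L' > L$, giving the asserted boundary description.

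It remains to verify the cells $\{Z(L,T) : L \leq T\}$ form a regular CW complex. I would show the open cells $\mathrm{int}\, Z(L,T)$ partition the fine Salvetti complex: any simplex is a chain $w(L_0, T_0) < \cdots < w(L_n, T_n)$, and unpacking the Salvetti order along with $L \leq T \Rightarrow L \circ T = T$ shows the chain lies in $\mathrm{int}\, Z(L,T)$ precisely when its minimum vertex $w(L_n, T_n)$ equals $w(L, T)$, so each simplex is placed in the unique open cell determined by its minimum vertex. Regularity is then immediate from each closed cell being a simplicial ball bounded by a union of lower-dimensional cells, and the homeomorphism with the fine Salvetti complex follows because both share the same underlying space. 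The most delicate step is extracting from Folkman--Lawrence that the link of the cell for $L$ is itself realized by a natural rank-$\dim L$ oriented matroid (the ``contraction at $L$''), so that the link dimension is exactly $\dim L - 1$; this matching of conventions, rather than the topology, is where I expect the bookkeeping to be most careful.
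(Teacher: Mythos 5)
The structure of your argument is sound and, in spirit, matches what the paper gestures at in the paragraph immediately following the proposition (where the identification $w(L',L'\circ T)\mapsto L'|_F$ is stated). Two points need correction, though.

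First, you have the direction of the Salvetti order backwards at the cell‑apex stage. Since $w(L,T)\le w(L',T')$ requires $L'\le L$ in the covector order, the identification of $\{w(L',L'\circ T):L'\ge L\}$ with $\{L'\ge L\}$ is an \emph{anti}-isomorphism of posets, as you correctly note. But this means the vertex $w(L,T)$ (corresponding to the minimum covector $L'=L$) is the unique \emph{maximum} of $Z(L,T)$'s vertex set in the Salvetti order, and the vertex set of $Z(L,T)$ is the principal down-set of $w(L,T)$. Consequently a chain $w(L_0,T_0)<\cdots<w(L_n,T_n)$ lies in $\mathrm{int}\,Z(L,T)$ precisely when its \emph{maximum} vertex $w(L_n,T_n)$ equals $w(L,T)$, not its minimum. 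Your index $n$ is the right one, but the word ``minimum'' is wrong and will confuse a reader trying to verify the partition claim.

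Second, the oriented matroid realizing the link/interval is the \emph{restriction} $M|_F = M\setminus(E\setminus F)$ to the flat $F=E\setminus\supp(L)$, not a contraction at $L$. The paper states this explicitly: the cell $Z(L,T)$ is canonically the order complex $\Delta(\mathcal L(M|_F))$. You do get the rank right ($\rank(M|_F)=\rank_{\underline M}(F)=\dim L$), which is what the ball dimension rests on, but the contraction $M/F$ has rank $d-\dim L$ and is the wrong object. Indeed, the cleanest way to close the gap you flag as ``most delicate'' is to identify $\{L'\ge L\}\cong\mathcal L(M|_F)$ directly (via $L'\mapsto L'|_F$), apply Folkman--Lawrence to $M|_F$ to see $\mathcal L(M|_F)\setminus\{\hat 0\}$ is the face poset of a regular CW decomposition of $S^{\dim L-1}$, and then observe $\Delta(\mathcal L(M|_F))$ is the cone over this sphere with apex $\hat 0$, hence a $\dim L$-ball. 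This sidesteps invoking the general ``order complex of a proper upper interval in a face poset of a PL sphere is the link'' fact, which would require a PL-ness argument you haven't supplied.
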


We elaborate more details and notations from the above proposition.
For any pair of $L\in\Lcal, T\in\Tcal$, define $Z(L,T)$ to be the cell $Z(L,L\circ T)$; it is consistent with the already defined case $L<T$.
Therefore, we have $Z(L,T) = Z(L,T')$ for distinct $T,T'$ if and only if $L \circ T = L \circ T'$.

The set of vertices of the Salvetti complex is $\{Z(T,T):T\in\Tcal\}$, hence there is a bijection between the set of vertices and $\mathcal{T}$. This is because $Z(T,T')$ is independent of the choice of $T'$ since $ Z(T,T') = Z(T,T \circ T') = Z(T,T)$.

\begin{definition}\label{def:conj}
The Salvetti poset, hence the fine Salvetti complex, admits a canonical involution which we call \emph{the conjugation action}, given by $\overline{w(L,T)}=w(L,L\circ(-T))$. The conjugation action is well-defined for the Salvetti complex, in which the conjugate of $Z(L,T)$ is simply $Z(L,-T)$.

The fixed locus of the conjugation action consists of cells $\{w(T, T) \ | \ T \in \mathcal{T} \}$ and $\{Z(T,T) \ | \ T \in \mathcal{T} \}$ in the fine Salvetti complex and Salvetti complex, respectively.
\end{definition}

Throughout we denote by $\Delta(P)$ the order complex of a poset $P$ whose vertex set is $P$ itself. 
Let $L$ be a covector and $F$ be the flat complement to $\supp(L)$.
The cell $Z(L,T)$ is canonically homeomorphic to the order complex $\Delta(\Lcal(M|_F))$ (i.e., a topological representation of the oriented matroid $M|_F$) in the following sense:
consider the barycentric subdivision of $Z(L, T)$ as a subcomplex of $\widetilde{\Sal}_M$, whose vertices are $w(L',L'\circ T)$'s for $L'\geq L$, then the homeomorphism is defined by the map of vertices $w(L',L'\circ T)\mapsto L'|_F$.
In particular, when the oriented matroid is realisable by a real hyperplane arrangement $\{{\bf v}_i^\perp: {\bf v}_i\in\mathbb{R}^d\setminus\{{\bf 0}\}\}$, each top dimensional cell $Z({\bf 0},T)$ has the same face poset as the {\em zonotope} of the arrangement, i.e., the Minkowski sum of the vectors ${\bf v}_i$, hence the notation $Z(L,T)$.

\begin{example} \label{ex:SalvettiU23}
\normalfont
Let $M$ be the oriented matroid realised by the real hyperplane arrangement in Example~\ref{ex:CovectorsU23} (and depicted in 
Figure \ref{diag:U23}).
The topes $\Tcal=\{T_1,\ldots,T_6\}$ are covectors of dimension $0$, the set 
$\Lcal^*=\{\zeta_1,\ldots,\zeta_6\}$ are the covectors of dimension 
$1$, and $O$ is of dimension $2$. Notice that the dimension of a covector $L$ corresponds to the dimension of the zonotope $Z(L,T)$.
The Salvetti complex of $M$ thus has six 2-dimensional cells $Z(O,T_i), T_i\in\Tcal$, each cell has the cellular structure of a hexagon (the Minkowski sum of three generic vectors in $\mathbb{R}^2$).

\begin{figure}[]
\begin{center}
    \includegraphics[scale=0.4]{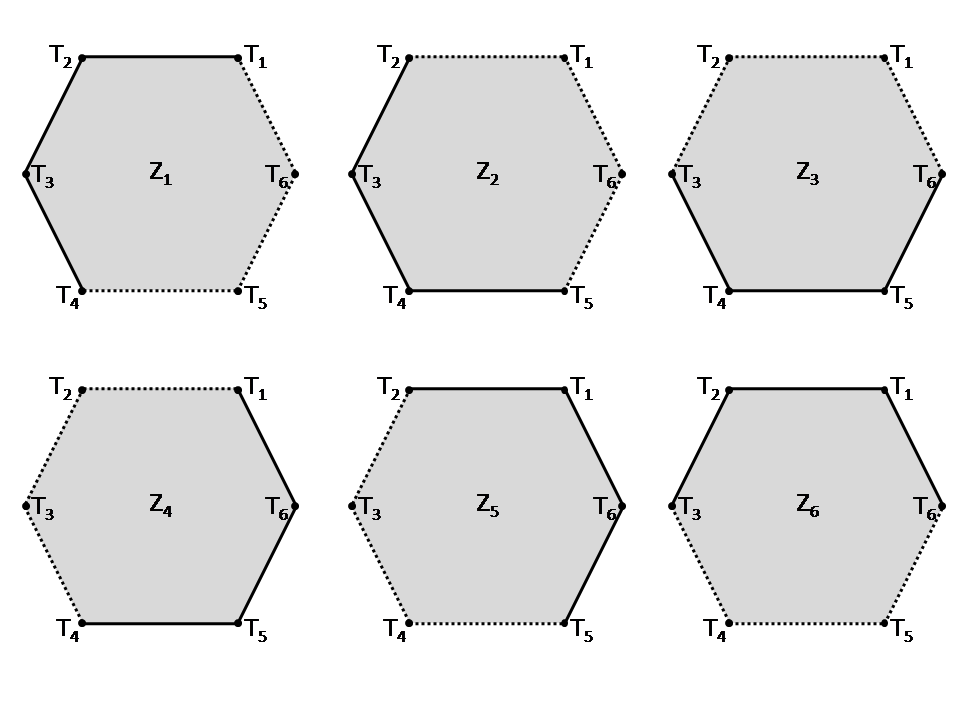}\\
  \vspace{-0.2cm}
  \caption{The six 2-dimensional cells of the Salvetti complex from Example \ref{ex:SalvettiU23}}
  \label{diag:Salvetti}
\end{center}
\end{figure}

The complex $\Sal_M$ has 6 vertices $Z(T_i,\bullet), T_i\in\Tcal$, which we just label by $T_i$ in the diagram.
It also has 12 1-cells $Z(\zeta_i,T)$, two for each $\zeta_i\in\Lcal^*$: recall that $T_i,\zeta_i,T_{i+1}$ are listed in counter-clockwise order in Figure~\ref{diag:U23}, and $\zeta_i$ gives rise to $Z(\zeta_i,T_i),Z(\zeta_i,T_{i+1})$, whose boundaries both consist of the vertices $Z(T_i,\bullet),Z(T_{i+1},\bullet)$. In the diagram,  $Z(\zeta_i,T_i)$ is drawn as a solid segment whereas $Z(\zeta_i,T_{i+1})$ is dotted.

The complex $\Sal_M$ is obtained by identifying the pieces in Figure \ref{diag:Salvetti} by the corresponding labels.
\end{example}

\subsection{The Orlik--Solomon Algebra and the Cordovil Algebra} \label{sec:OS_Cord}

Next we introduce the Orlik--Solomon algebra of a matroid.
Its significance is that when the matroid is representable over $\CC$, its Orlik--Solomon algebra is naturally isomorphic to the (de Rham) cohomology ring of the complement of any complex hyperplane arrangement representing the matroid.

\begin{definition}
For a matroid $\underline{M}$, its {\em Orlik--Solomon algebra} $\OS^\bullet(\underline{M}; R)$ (where $R=\ZZ$ or $\Ztwo$) is the algebra defined by \[ \frac{\bigwedge(R^*)^E}{\langle \sum_{k=1}^t (-1)^k e^*_{i_1}\wedge\ldots\wedge\widehat{e^*_{i_k}}\wedge\ldots\wedge e^*_{i_t}\rangle},\] where the generators of the ideal are taken over all circuits $\{e_{i_1}<\ldots<e_{i_t}\}$ of $\underline{M}$, ordered by an arbitrary but fixed ordering of $E$.
\end{definition}

The Orlik--Solomon algebra is naturally graded and $\OS^\bullet(\underline{M}; \ZZ)$ is torsion free. 
Denote the rank of the degree $p$ piece of $\OS^{p}(M; \ZZ)$ by $b_p(\underline{M})$, or simply $b_p$ if the underlying matroid is clear. Then $b_p(\underline{M}) = \dim\OS^p(\underline{M}; \Ztwo)$. 

\begin{theorem} \cite[Section~7]{BZ92} \label{thm:H_eq_OS}
Let $M$ be an oriented matroid.
Then 
$$
H^\bullet(\Sal_M;\ZZ)\cong \OS^\bullet(\underline{M}; \ZZ).
$$
In particular, $b_p(\underline{M})=\dim H^p(\Sal_M;\Ztwo)=\rank H^p(\Sal_M;\ZZ)$.
\end{theorem}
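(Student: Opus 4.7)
The plan is to compute $H^\bullet(\Sal_M;\ZZ)$ directly from the cellular cochain complex of the CW structure given in Proposition~\ref{prop:salComplex}, and then to construct an explicit graded ring map from $\OS^\bullet(\underline{M};\ZZ)$ to it and prove that it is an isomorphism. Recall that the $p$-cells of $\Sal_M$ are the $Z(L,T)$ with $L\leq T$ and $\dim L = p$, so $p$-cochains are functions on such pairs and the coboundary is determined by the regular CW incidence numbers coming from the zonotopal boundaries described in Proposition~\ref{prop:salComplex}.

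I would construct the map degree-by-degree. In degree one, for each $e\in E$ send the generator $e^*$ to the cellular $1$-cocycle $\omega_e$ which vanishes on $Z(\ell,T)$ unless $\supp \ell = E\setminus \{e\}$, in which case it records a signed ``winding'' across the $e$-th pseudohyperplane, e.g.\ $\omega_e(Z(\ell,T)) = T_e \cdot \varepsilon(\ell)$ for a choice of orientation $\varepsilon(\ell)\in\{\pm 1\}$ of the 1-cell. A local check on the $2$-skeleton (which is a union of hexagonal faces, as in Example~\ref{ex:SalvettiU23}, reflecting rank-$2$ minors) shows $\omega_e$ is a cocycle. Extending multiplicatively via cup product gives a graded algebra homomorphism once the Orlik--Solomon relations are verified.

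For the relations, fix a circuit $C=\{e_{i_1}<\cdots<e_{i_t}\}$ of $\underline{M}$ and restrict attention to the subcomplex of $\Sal_M$ consisting of cells $Z(L,T)$ with $\supp L \supseteq E\setminus C$; this subcomplex encodes the Salvetti complex of the oriented restriction $M|_C$, whose underlying matroid has rank $t-1$. On this piece the product $\omega_{e_{i_1}}\cup\cdots\cup\omega_{e_{i_t}}$ necessarily vanishes since there are no $t$-cells, and a direct computation shows the alternating sum $\sum_k (-1)^k \omega_{e_{i_1}}\cup\cdots\cup\widehat{\omega_{e_{i_k}}}\cup\cdots\cup\omega_{e_{i_t}}$ is a coboundary in $\Sal_{M|_C}$, forcing it to be a coboundary in $\Sal_M$ as well.

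The main obstacle is showing this map is an isomorphism, and in particular integrally. My plan is to match bases on each side: fix an ordering of $E$, let $\mathrm{nbc}(\underline{M})$ be the set of no-broken-circuit subsets, which indexes a $\ZZ$-basis of $\OS^\bullet(\underline{M};\ZZ)$, and show that the corresponding cup products of the $\omega_e$'s form a $\ZZ$-basis of $H^\bullet(\Sal_M;\ZZ)$. I would set up a discrete Morse matching on the cellular chain complex of $\Sal_M$ whose critical cells are indexed by pairs (nbc set, tope) in such a way that the resulting chain complex is free of rank $b_p(\underline{M})$ in each degree $p$, with trivial differentials. The hardest aspect is constructing this matching compatibly with the combinatorics of the Salvetti cell structure and showing that there is no integral torsion; as a sanity check, for realizable $M$ the classical Orlik--Solomon theorem for the complexified complement directly gives the isomorphism and matches the integral rank, and non-realizable cases can be handled by a deletion-contraction long exact sequence relating $\Sal_M$, $\Sal_{M\setminus e}$, and $\Sal_{M/e}$.
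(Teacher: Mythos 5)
This theorem is not proved in the paper: it is cited to Bj\"orner--Ziegler \cite[Section~7]{BZ92}. The paper only uses the result, and later (Theorem~\ref{thm:BZ_Zbasis}) imports the specific form of the isomorphism: a $\ZZ$-basis of cellular cocycles $c^S$ on the \emph{fine} (simplicial) Salvetti complex, indexed by NBC-sets, mapping to the NBC-monomials of $\OS^\bullet$. So your proposal should be judged against that reference rather than a proof in the paper. Your overall plan --- explicit cocycles, OS relations, NBC bases --- is in the correct spirit, but there are several concrete gaps.

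First, ``extending multiplicatively via cup product'' on the coarse CW structure is not automatic: cellular cochain complexes of regular CW complexes do not carry a canonical cup product; one needs a diagonal approximation, which is why Bj\"orner--Ziegler work on the \emph{fine} Salvetti complex, where the Alexander--Whitney map is available, and define the NBC-cocycles directly there rather than as products of degree-one classes. Second, the step verifying the OS relations has a logical gap: you restrict the alternating sum to the subcomplex $X_C := \{Z(L,T) : \supp L \supseteq E\setminus C\}$, observe it is a coboundary there, and conclude it is a coboundary in $\Sal_M$. That implication fails in general --- a cochain supported on a subcomplex can be exact on the subcomplex without being exact on the ambient complex, since extending a primitive by zero may produce extra boundary terms on cells outside the subcomplex. (Also, $X_C$ is not $\Sal_{M|_C}$: its covectors can vary freely on $E\setminus C$, so it is a ``thickened'' version of that complex.) Third, the discrete Morse matching carrying the NBC count and the integral torsion-freeness is precisely the hard content, and it is asserted rather than built; and the deletion--contraction long exact sequence for Salvetti complexes of general oriented matroids that your fallback invokes is itself a nontrivial statement that would need to be established and shown compatible with the OS deletion--contraction sequence before it can close the induction.

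In short: right shape, but the key steps (well-defined cup product, descent of exactness from the subcomplex, the Morse matching or the LES) are the actual theorem and are left open. If you want to follow the route actually used, look up \cite[Theorem~7.2]{BZ92} (reproduced as Theorem~\ref{thm:BZ_Zbasis} here), which gives explicit simplicial cocycles $c^S$ on $\widetilde{\Sal}_M$ and proves directly that they form a $\ZZ$-basis dual to NBC monomials, avoiding the cup-product and subcomplex issues above.
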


By a theorem of Zaslavsky \cite{GZ83}, usually stated using the {\em characteristic polynomial} of $\underline{M}$, the number of topes of an oriented matroid $M$ is equal to the sum of $b_p$'s.
\begin{equation}\label{Zaslavsky}
|\Tcal(M)| = \dim \OS^\bullet(\underline{M}; \Ztwo)=\rank \OS^\bullet(\underline{M}; \ZZ).
\end{equation}

When a statement holds for both $R = \ZZ$ or $\Ztwo$ we drop $R$ from the notation $\OS_p(\underline{M}; R)$.
We also require the duals of the graded pieces of the Orlik--Solomon algebra $\OS_p(\underline{M}) := \OS^p(\underline{M})^*$. From the definition of $\OS^\bullet(\underline{M})$ it follows that $\OS_p(\underline{M})$ is a vector subspace/submodule of $\bigwedge^p R^E$. 

For $i \in E$ let $\ee_i$ denote the standard basis element of $R^E$ corresponding to $i$.
For a subset $F \subset E$ denote by $\ee_F = \sum_{i \in F} \ee_i$. Then we have the following description of generators of $\OS_p(\underline{M})$. The statement below is equivalent to the one in \cite{Zhar_OS}, which is expressed in terms of faces of the polyhedral fan of the matroid.

\begin{proposition}\cite[Lemma~5]{Zhar_OS}\label{prop:OSZhar}
For a matroid $M$ 
\[\OS_p(\underline{M}) = \langle  \ee_{F_1} \wedge \dots \wedge \ee_{F_p} \ | \ F_1 \subsetneq \dots \subsetneq F_p \rangle, \]
where $F_1 \subsetneq \dots \subsetneq F_p$ is a flag of flats in $\underline{M}$ where $\rank F_i=i$.
\end{proposition}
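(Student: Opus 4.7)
The plan is to prove the equality by establishing both inclusions, working inside $\bigwedge^p R^E$ which contains $\OS_p(\underline{M})$ as the annihilator of the degree-$p$ part $I^p$ of the Orlik--Solomon ideal under the standard pairing with $\bigwedge^p (R^E)^*$.

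For the inclusion $\supseteq$, fix a flag $F_1 \subsetneq \ldots \subsetneq F_p$ with $\rank F_i = i$; I would show that $\ee_{F_1}\wedge\ldots\wedge\ee_{F_p}$ annihilates every generator $\omega \wedge \partial C$ of $I^p$, where $C = \{e_{j_1}<\ldots<e_{j_t}\}$ is a circuit and $\omega$ is a wedge of $p-t+1$ standard basis covectors indexed by $A\subset E\setminus C$. The key structural fact I would use is that for any circuit $C$ and flat $F$, either $C\subseteq F$ or $|C\setminus F|\geq 2$: indeed, if $|C\setminus F|=1$ then the unique element of $C\setminus F$ would lie in the closure of $C\cap F\subseteq F$, contradicting its absence from $F$. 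Expanding each $\ee_{F_i}$ as $\sum_{k\in F_i}\ee_k$ and tracking which positions of the wedge can absorb circuit elements, this structural fact implies that a position $i$ with $C\not\subseteq F_i$ cannot receive all but one element of $C$. Hence the only nontrivial contributions come from positions indexing $F_i\supseteq C$, on which choosing the ``missing'' element of $C$ (matching the alternating sum in $\partial C$) produces a symmetry forcing the pairing to vanish.

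For the inclusion $\subseteq$, I would argue by a dimension/spanning comparison using the no-broken-circuit (nbc) basis. Fix a total order on $E$; by Bj\"orner's theorem, $\OS^p(\underline{M};R)$ has a basis indexed by nbc sets of size $p$, so $\rank \OS_p(\underline{M};\ZZ)=b_p(\underline{M})$ equals the number of such nbc sets. For each nbc set $\{i_1<\ldots<i_p\}$ (which is automatically independent), set $F_j := \overline{\{i_1,\ldots,i_j\}}$; then $\rank F_j=j$ and by independence $\{i_1,\ldots,i_p\}\cap F_j = \{i_1,\ldots,i_j\}$. Expanding $\ee_{F_1}\wedge\ldots\wedge\ee_{F_p}$ in the standard basis of $\bigwedge^p R^E$, the coefficient of $\ee_{i_1}\wedge\ldots\wedge\ee_{i_p}$ is exactly $+1$, while every other $p$-subset $J$ appearing satisfies $|J\cap F_j|\geq j$ for all $j$. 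Ordering $p$-subsets lexicographically with respect to these intersection profiles exhibits $\{i_1,\ldots,i_p\}$ as the unique ``leading'' term, giving a triangular, hence invertible, transition matrix; thus the $b_p$ flag elements coming from nbc sets are linearly independent and hence span $\OS_p(\underline{M})$.

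The main obstacle is the forward inclusion, where the bookkeeping of how the expansion of $\ee_{F_1}\wedge\ldots\wedge\ee_{F_p}$ interacts with the circuit relation can get intricate. A cleaner route, if available, is to interpret the pairing as the boundary of the cellular chain on the Bergman fan represented by the cone spanned by $(\ee_{F_1},\ldots,\ee_{F_p})$, reducing the vanishing to $\partial^2=0$; this is essentially the perspective taken in \cite{Zhar_OS}.
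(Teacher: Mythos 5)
The paper does not supply its own proof of Proposition~\ref{prop:OSZhar} --- it is cited directly from \cite[Lemma~5]{Zhar_OS} --- so I will assess your argument on its own merits.

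Your forward inclusion is essentially sound. The circuit--flat dichotomy (for a circuit $C$ and flat $F$, either $C\subseteq F$ or $|C\setminus F|\geq 2$) is correct and is exactly the right tool. Combined with the observation that $\langle \ee_{F_1}\wedge\dots\wedge\ee_{F_p}, e^*_B\rangle\neq 0$ forces $B$ to be a transversal of the level sets $F_i\setminus F_{i-1}$, one finds that the only generator $e^*_A\wedge\partial e_C$ that could pair nontrivially must have exactly two elements of $C$ landing in a single level $F_{i^*}\setminus F_{i^*-1}$ (where $i^*$ is minimal with $C\subseteq F_{i^*}$), and the two surviving terms in the alternating sum carry opposite signs. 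The write-up is terse on the sign bookkeeping, but the mechanism is correct.

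The backward inclusion has a genuine gap. The assignment $F_j := \overline{\{i_1,\dots,i_j\}}$ with $S = \{i_1 < \dots < i_p\}$ listed in \emph{increasing} order does not produce distinct flags for distinct NBC sets, so the corresponding flag elements can literally coincide and the ``triangular transition matrix'' collapses. A concrete counterexample: in $U_{2,3}$ with ground set $\{1,2,3\}$ and the natural order, the NBC sets of size $2$ are $\{1,2\}$ and $\{1,3\}$, but both produce the same flag $\{1\}\subsetneq E$ because $\overline{\{1,2\}} = \overline{\{1,3\}} = E$; the associated flag element is $\ee_1\wedge\ee_E$ in both cases, so no ordering can make the matrix triangular with nonzero diagonal. (Also, your proposed ``intersection profile'' ordering cannot distinguish anything: every $p$-subset $J$ occurring in the expansion is a transversal, so all have the identical profile $(1,2,\dots,p)$.)

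The repair is to reverse the order: write $S = \{i_1 > i_2 > \dots > i_p\}$ in \emph{decreasing} order and set $F_j := \overline{\{i_1,\dots,i_j\}}$. One can then check, using the no-broken-circuit condition, that $i_j = \min(F_j\setminus F_{j-1})$: if some $m < i_j$ lay in $F_j\setminus F_{j-1}$, the set $\{i_1,\dots,i_j, m\}\subseteq F_j$ would contain a circuit $C$ with $\min C = m$ (the other candidates $i_k$ all exceed $m$), making $C\setminus m\subseteq S$ a broken circuit. With this choice the paper's Lemma~\ref{lem:unique_NBC} gives, for any NBC set $S'\neq S$, a level index $k$ with $S'\cap(F_k\setminus F_{k-1})=\emptyset$, so the coefficient of $\ee_{S'}$ in $\ee_{F_1}\wedge\dots\wedge\ee_{F_p}$ vanishes while the coefficient of $\ee_S$ is $\pm 1$. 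Evaluating against the NBC monomial basis of $\OS^p(\underline{M};\ZZ)$ then gives a monomial (permutation-with-signs) matrix over $\ZZ$, so these $b_p$ flag elements already span $\OS_p(\underline{M};\ZZ)$. This is precisely the pattern that the paper uses later in Proposition~\ref{prop:alpha_dual_basis}, Corollary~\ref{coro:NBC_Hp_basis} and Corollary~\ref{coro:NBC_Pp_basis}.
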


In Section~\ref{sec:cosheaf}, we will work with another algebra defined for an oriented matroid, first introduced by Cordovil \cite{Cord02} as a combinatorial analogue of the {\em Orlik--Terao algebra} of a real hyperplane arrangement.

\begin{definition} \label{def:Cordovil}
For an oriented matroid $M$, its {\em Cordovil Algebra} $\Acal^\bullet(M; R)$ is the naturally graded algebra
\[ \frac{R[x^*_i:i\in E]}{\langle (x^*_i)^2\rangle+\langle \sum_{k=1}^t C(i_k) x^*_{i_1}\ldots\widehat{x^*_{i_k}}\ldots x^*_{i_t}\rangle},\] where the generators of the second ideal are taken over all circuits $\{e_{i_1},\ldots,e_{i_t}\}$ of $\underline{M}$, and $C:E\rightarrow\{+,-,0\}$ is a signed circuit of $M$ supported on it.
\end{definition}

It is evident that $\OS^\bullet(\underline{M}; \Ztwo)\cong\Acal^\bullet(M;\Ztwo)$ as algebras via the isomorphism $e^*_{i_1}\wedge\ldots\wedge e^*_{i_k}\mapsto x^*_{i_1}\ldots x^*_{i_k}$. Moreover, it follows easily from the deletion-contraction recurrence \cite[Theorem~2.7]{Cord02} for the Cordovil algebra that it is torsion free, thus $\rank \Acal^p(M;\ZZ)=b_p(\underline{M})$ as well. However, $\OS^\bullet(\underline{M}; \ZZ)$ is anti-commutative while $\Acal^\bullet(M;\ZZ)$ is commutative; more importantly, $\Acal^\bullet(M;\ZZ)$ records the oriented matroid data but $\OS^\bullet(\underline{M}; \ZZ)$ does not.

The Cordovil algebra is less studied than the Orlik--Solomon algebra in the literature, and some basic (but novel to the best of our knowledge) properties of the Cordovil algebra and their duals are proven in Section~\ref{sec:Cordovil}. Nevertheless, we note a topological interpretation of $\Acal^\bullet(M)$ is given in \cite{Mose17} and extended in \cite{DPW_COM}; the connection of our work with these interpretations is to be explored. 

\subsection{Matroid Fans and Cosheaves}

In Section \ref{sec:cosheaf}, we will show that the Varchenko--Gelfand filtration of the $\ZZ$-tope space of an oriented matroid as well its Cordovil algebra can be made into cosheaves on the fan of the underlying matroid. 

We first present the fan of a matroid as introduced by Ardila and Klivans \cite{AK06}. 

\begin{definition}\label{def:matfan}
The fan of a matroid $\Sigma_{\underline{M}}$ is the polyhedral fan in $\RR^E$ consisting of the collection of cones 
$$\{ \sigma_{\mathcal{F}} \ | \ \mathcal{F} = \emptyset \subsetneq F_1 \subsetneq \dots \subsetneq F_k \subsetneq E \},$$ 
where $\mathcal{F}$ is a flag of flats of $\underline{M}$ and 
$\sigma_{\mathcal{F}} = \langle \ee_{F_1} \dots, \ee_{F_k} \dots, \pm \ee_E\rangle_{\geq 0}$.
\end{definition}

The projective matroid fan $\Sigma^{\pj}_{\underline{M}}$ is the image of the fan above under the quotient map 
$\RR^E \to \RR^E / \langle (1 ,\dots, 1) \rangle.$ Note that there is a bijection between the cones of $\Sigma_M$ and $\Sigma^{proj}_{\underline{M}}$. 

To describe sheaves and cosheaves on polyhedral fans, we will consider them as (finite) categories. The objects of the category of $\Sigma$ are its cones and the morphisms correspond to inclusions of cones. In other words, there is a map between the cones $\tau \to \sigma$ if and only if there is an inclusion $\tau  \subset \sigma$. The opposite category obtained by reversing the arrows is denoted by $\Sigma^{\text{op}}$. 

\begin{definition}\label{def:sheaf}
A sheaf on a polyhedral fan $\Sigma$ is a functor 
$\mathfrak{F} : \Sigma \to {\rm Mod_R}$
and a cosheaf on a polyhedral fan $\Sigma$ is a functor 
$\mathfrak{G} : \Sigma^{\text{op}} \to {\rm Mod}_R$.
\end{definition}

In the next section, we define the tope space of an oriented matroid. We define the $\ZZ$-sign cosheaf $\Scal$ of an oriented matroid as the functor which assigns to each face of the fan of the underlying matroid the $\ZZ$-tope space of $M_\Fcal$, where $\Fcal$ is the flag of flats corresponding to the face of the fan. The morphisms of the $\ZZ$-sign cosheaf are inclusions. 

\section{Three Filtrations of the Tope Space}

This section presents three different filtrations of the tope space of an oriented matroid. 

\begin{definition}
The \emph{$R$-tope space} of an oriented matroid $M$ is free module $R[\Tcal(M)]$.
\end{definition}

Notice that the $\Ztwo$-tope space is a vector subspace of the $2^{|E|}$-dimensional group algebra 
$(\Ztwo)[(\Ztwo)^E]$, and the $\ZZ$-tope space is a submodule of $\ZZ [(\Ztwo)^E]$.

\subsection{Dual Varchenko--Gelfand Filtration} \label{sec:GV}

Let $M$ be an oriented matroid with topes $\Tcal$. The dual Varchenko--Gelfand filtration filters the $\ZZ$-tope space. In fact, it is most easily defined on the dual of the tope space (in which the filtration is commonly referred as Varchenko--Gelfand filtration). 
The collection of functions $\ZZ[\Tcal]^*:=\{f:\ZZ[\Tcal]\rightarrow\ZZ: f \text{ linear}\}$ is equipped with a ring structure given by pointwise addition and multiplication, known as the {\em Varchenko--Gelfand ring}.

\begin{definition}
For each $e\in E$, define the {\em Heaviside function} $h_e\in\ZZ[\Tcal(M)]^*$ by extending $h_e(T)=
\begin{cases}
  1  & T_e= + \\
  0 & T_e = -
\end{cases}$
linearly.
\end{definition}

The ring $\ZZ[\Tcal]^*$ is then generated by $h_e$, $e\in E$, and the unit ${\bf 1}$ given by ${\bf 1}(T) = 1$ for all $T \in \Tcal$.
Now for each non-zero $f\in \ZZ[\Tcal]^*$, we can define its {\em degree} as the minimum degree of polynomial in $h_e$'s that represents $f$. We define the degree of the zero function as $-1$.
The degree of any element of $\ZZ[\Tcal(M)]^*$ is well-defined and always at most $d$ \cite{GV87}.

\begin{definition}
Denote by $\Pcal^p$ the collection of functions in $\ZZ[\Tcal(M)]^*$ whose degree is at most $p$.
The {\em degree filtration} of $\ZZ[\Tcal(M)]^*$ is the filtration $$\ZZ[\Tcal(M)]^*=\Pcal^d(M)\supset\Pcal^{d-1}(M)\supset\ldots\supset\Pcal^{-1}(M)=\{0\}.$$
The {\em dual degree filtration} of $\ZZ[\Tcal(M)]$ is $$\ZZ[\Tcal(M)]=\Pcal_0(M)\supset\Pcal_1(M)\supset\ldots\supset\Pcal_d(M)\supset\Pcal_{d+1}(M)=\{0\},$$ where $\Pcal_p(M)$ is the annihilator of $\Pcal^{p-1}(M)$ with respect to the pairing $$\ZZ[\Tcal(M)]^*\times\ZZ[\Tcal(M)]\rightarrow\ZZ$$ given by $(f,\gamma)\mapsto f(\gamma)$.

Denote by $\overline{\Pcal}_\bullet(M)$ the filtration of $(\Ztwo)[\Tcal(M)]$ given by the mod 2 reduction of $\Pcal_\bullet$.
\end{definition}

Next we give an alternative description of the dual degree filtration of Varchenko--Gelfand using {\em prefix chains}.
The notion was first introduced in the context of hyperplane arrangements by Varchenko--Gelfand under the name of {\em flag cochains}, and reintroduced by Denham for general oriented matroids under the name of {\em bricks}.

For an element ${\bf v} \in (\Ztwo)^E$, let $T_{{\bf v}} \in \{+,-\}^E$ be given by $(T_{\bf v})_i = (-1)^{{\bf v}_i}$, i.e., $(T_{\bf v})_i=+$ if and only if ${\bf v}_i=0$.

\begin{definition}\label{def:tope_flag}
Let $\Fcal:\emptyset=F_0 \subsetneq  F_1 \subsetneq \ldots \subsetneq  F_k=E$ be a flag of flats.
Define 
$$\Tcal_\Fcal := \{ T \in \mathcal{T}(M) \ | \ T\setminus F_i \in \mathcal{L} \text{ for all } F_i \in \mathcal{F}\}. $$ 
\end{definition}

From the point of view of a pseudohyperplane arrangement, $\Tcal_\Fcal$ consists of the topes (viewed as connected components of the complement of the arrangement) for which the intersection with the (pseudo)subspace corresponding to each flat $F_i$ is a cell of (the closure of) the tope of codimension $\rank(F_i)$.
The next proposition gives a more explicit description of $\Tcal_\Fcal$.

{\sc\bf Notation}: For the rest of this paper, when a complete flag of flats $\Fcal$ is given, we denote by $\dd_{\Fcal,p}$ (respectively, $\ff_{\Fcal, p}$) the vector $\ee_{F_p\setminus F_{p-1}}$ (respectively, $\ee_{F_p}$); when the flag is clear we often omit the $\Fcal$ subscript.

\begin{proposition} \label{prop:T_F_structure} \cite[Lemma 3.2]{RRS}
Let $\Fcal:\emptyset=F_0\subsetneq  F_1 \subsetneq \ldots \subsetneq F_k=E$ be a flag of flats of $M$. Then $\Tcal_\Fcal$ is the collection of topes of $M_\Fcal$.
If the flag is complete, then $\Tcal_\Fcal$ has the structure of an affine subspace of $(\Ztwo)^E$ which is parallel to $\langle \ff_1, \dots, \ff_d\rangle=\langle \dd_1, \dots, \dd_d\rangle$.
Namely, $\Tcal_\Fcal$ is in correspondence with $\bf v + \langle \ff_1, \dots, \ff_d\rangle,$
for any $T_{\bf v} \in \Tcal_\Fcal$. 
\end{proposition}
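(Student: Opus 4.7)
The plan is to split the proposition into the set-theoretic identification $\Tcal_\Fcal = \Tcal(M_\Fcal)$ and then the affine structure for complete flags. For the identification, I would begin by rewriting the condition $T\setminus F_i\in\Lcal(M)$. By the standard covector description of contractions, $\Lcal(M/F_i)=\{L\in\Lcal(M):\supp(L)\subseteq E\setminus F_i\}$, so this condition is equivalent to $T|_{E\setminus F_i}$ being a covector of $M/F_i$, which (since $T$ has no zero entries) is automatically a tope of $M/F_i$. Restricting further to $F_{i+1}\setminus F_i$ and using the commutativity of deletion and contraction, $(M/F_i)|_{F_{i+1}\setminus F_i}=M|_{F_{i+1}}/F_i$, one sees that every $T\in\Tcal_\Fcal$ restricts to a tope of $M|_{F_{i+1}}/F_i$ on each block $F_{i+1}\setminus F_i$, which is exactly the condition for $T$ to be a tope of the direct sum $M_\Fcal$. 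This gives the inclusion $\Tcal_\Fcal\subseteq\Tcal(M_\Fcal)$.

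For the reverse inclusion I would induct on the length of the flag, reducing to the two-term flag $\emptyset<F<E$. Given a tope $X$ of $M|_F$ and a tope $Y$ of $M/F$, I would assemble them into $T\in\{+,-\}^E$ with $T|_F=X$ and $T|_{E\setminus F}=Y$. The condition $T\setminus F\in\Lcal(M)$ is immediate because $Y$ extended by zeros on $F$ lies in $\Lcal(M)$ by the contraction description just used. To see that $T$ itself is a tope of $M$, I would invoke the dual deletion fact that restriction of covectors to $F$ surjects onto $\Lcal(M|_F)$: choose any lift $X^{\uparrow}\in\Lcal(M)$ with $X^{\uparrow}|_F=X$, and form the composition of $Y$ (extended by zeros) with $X^{\uparrow}$. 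On $E\setminus F$ the composition equals $Y$ because $Y$ has no zero entries there, and on $F$ it equals $X^{\uparrow}|_F=X$ because $Y$ is zero there; hence the composition equals $T$, and being a composition of covectors of $M$ with no zero entries it is a tope of $M$. Iterating this two-term argument along the flag yields $\Tcal(M_\Fcal)\subseteq\Tcal_\Fcal$.

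For the affine structure under a complete flag, each summand $M|_{F_p}/F_{p-1}$ ($p=1,\ldots,d$) is a loopless rank-one oriented matroid on $F_p\setminus F_{p-1}$, hence has exactly two antipodal topes differing on that block; in $\Ztwo$-coordinates they differ by $\ee_{F_p\setminus F_{p-1}}=\dd_p$. Assembling the $d$ blocks, $\Tcal(M_\Fcal)$ is obtained from any chosen base tope $T_\vv\in\Tcal_\Fcal$ by independently negating on each block, which in $\Ztwo$-coordinates is exactly the coset $\vv+\langle\dd_1,\ldots,\dd_d\rangle$. Finally, the disjoint union $F_p=\bigsqcup_{j\leq p}(F_j\setminus F_{j-1})$ yields the identity $\ff_p=\sum_{j\leq p}\dd_j$, a unit-diagonal upper-triangular change of basis, so $\langle\dd_1,\ldots,\dd_d\rangle=\langle\ff_1,\ldots,\ff_d\rangle$ and we obtain $\Tcal_\Fcal=\vv+\langle\ff_1,\ldots,\ff_d\rangle$ as claimed.

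The main technical point I anticipate is the lifting/composition argument in the second paragraph: one has to coordinate both the surjectivity of covector restriction for deletion and the description of covectors of the contraction as covectors of $M$ supported outside $F$. With these two standard facts about minors of oriented matroids in place, composition of covectors does the substantive work, and the rest of the proof reduces to a straightforward induction and a change-of-basis check in $\Ztwo$.
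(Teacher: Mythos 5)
The paper does not prove this proposition itself; it cites \cite[Lemma 3.2]{RRS} and uses it as a black box, so there is no in-paper argument to compare against. Your proof is correct and self-contained, resting on the standard facts that covectors of a contraction $M/A$ are the covectors of $M$ vanishing on $A$, that restriction to a subset surjects onto the covectors of the deletion, that deletion and contraction commute, and that composition of covectors is a covector. The two-term composition argument $(T\setminus F)\circ X^\uparrow=T$ is exactly the right tool for reassembling a tope of $M$ from topes of $M|_F$ and $M/F$, and the check that the result has no zero entries is correct. Your induction on the length of the flag (reducing to $\emptyset<F_1<E$ and passing to the contraction $M/F_1$ for the remainder of the flag) is only sketched in the line ``iterating this two-term argument along the flag,'' but the reduction you intend does go through; a referee might ask you to spell out that the inductive hypothesis on $M/F_1$ simultaneously delivers all of the conditions $T\setminus F_i\in\Lcal(M)$ for $i\geq 1$ via the identification $\Lcal(M/F_1)\cong\{L\in\Lcal(M):L|_{F_1}=0\}$. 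The affine-structure part, including the observation that each rank-one loopless summand has exactly two antipodal topes differing by $\dd_p$ and the unit-triangular change of basis $\ff_p=\sum_{j\leq p}\dd_j$, is correct as written.
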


See Example~\ref{ex:prefix_chains} for an illustration.
Suppose $V$ is a $p$-dimensional vector subspace of $ (\Ztwo)^E$ with a distinguished basis $\Bcal = \{ {\bf v}_1, \dots, {\bf v}_p\}$. 
Then for all ${\bf v} \in V$ we can write 
${\bf v} = \sum_{i = 1}^p a_i{\bf v}_i$ where $a_i = 0$ or $ 1$. 
Define the weight of ${\bf v }$ with respect to $\Bcal$ to be the number of $a_i$'s equal to one. Denote this weight by $w_\Bcal({\bf v})$.
Let $\text{Aff}_p(V)$ denote the set of $p$-dimensional affine subspaces of a vector space $V$.

If $U$ is an affine subspace of $ (\Ztwo)^E$, and $\vv\in U$, again upon choosing a distinguished basis $\Bcal$
of the vector subspace $V =\vv + U $ as above, we define $w_{\Bcal, \vv} (\uu) = w_\Bcal(\uu+ \vv)$. 
For a different choice of $\vv' \in U$, we have \[(-1)^{w_{\Bcal, \vv}(\uu)} = \epsilon (-1)^{w_{\Bcal, \vv'}(\uu)},\] for all $ \uu\in U$, where $\epsilon=(-1)^{w_\Bcal(\vv+\vv')}$ is independent of $\uu$.

\begin{definition} \label{def:prefix_chain}
Let $U$ be an affine subspace of $(\ZZ/2\ZZ)^E$ with a fixed choice of basis $\Bcal$ and $\vv\in U$.
Then define 
$$\gamma_{U, \Bcal, \vv} = \sum_{\uu\in U} (-1)^{w_{\Bcal, \vv}(\uu)} T_\uu\in\ZZ[(\ZZ/2\ZZ)^E].$$
Here, for an element $\uu$ of $(\ZZ/2\ZZ)^E$, we denote by $T_\uu$ the variable corresponding to $\uu$ in the group ring, which is consistent with the notation for tope spaces.
From the previous discussion, changing the choice of $\vv$ only changes $\gamma$ up to a sign.
\end{definition}

\begin{definition}\label{def:prefixchains}
Fix a complete flag of flats $\Fcal:\emptyset=F_0 \subsetneq  F_1 \subsetneq \ldots \subsetneq  F_d=E$, and fix a tope $T_{\vv}\in\Tcal_\Fcal$.
Let $U_{\Fcal,\vv,p} \in \text{Aff}_p (\ZZ/2\ZZ)^E$ be given by $\vv + \langle \dd_1, \ldots, \dd_p\rangle$. 

Then the {\em $p$-th prefix chain} of $\Fcal$ with respect to $T_{\vv}$ is $$\gamma_{\Fcal ,\vv, p} :=
\gamma_{U_{\Fcal,\vv,p},\Bcal, \vv}\in\ZZ[\Tcal],$$
where $\Bcal = \{\dd_1, \ldots, \dd_p\}$.
Occasionally we write $\gamma_{\Fcal,T,p}$ for the chain $\gamma_{\Fcal,\vv,p}$ with $T=T_{\vv}$.
\end{definition}

Notice that the topes involved in $\gamma_{\Fcal,\vv,p}$ (respectively, vectors in $U_{\Fcal,\vv,p}$) are precisely those topes of $\Tcal_\Fcal$ that agree with $T_{\vv}$ outside of $F_p$, and changing the origin $T_{\vv}$ to any other tope in the sum only changes the prefix chain up to sign.
Moreover, when $p<d$, a prefix chain only depends on the partial flag up to $F_p$ and the choice of $T_{\vv}$.

\begin{example} \label{ex:prefix_chains} \rm
Following Example~\ref{ex:SalvettiU23}, consider the flag $\Fcal: \emptyset \subsetneq  \{\ell_1\} \subsetneq  E$. The set 
$\Tcal_\Fcal$ consists of the topes $T_1,T_2,T_4,$ and $T_5$.
Choosing $T_1$ as the origin, the prefix chains $\gamma_{\Fcal,T_1,1}$ and $\gamma_{\Fcal,T_1,2}$ equal $T_1-T_2$ and $T_1-T_2+T_4-T_5$, respectively.
If we choose $T_5$ as the origin instead, then $\gamma_{\Fcal,T_5,1}=T_5-T_4,\gamma_{\Fcal,T_5,2}=T_5-T_4+T_2-T_1$.
Note that $\gamma_{\Fcal,T_1,2}$ and $\gamma_{\Fcal,T_5,2}$ differ only by a sign.
\end{example}

\begin{proposition}\cite{GV87} \label{prop:prefixdualdegree}
For an oriented matroid $M$, the $p$-th part of the dual degree filtration is the subgroup spanned by its $p$-th prefix chains. 
In other words,
$$\Pcal_p(M) = \langle \gamma_{\Fcal ,\vv, p}  \ | \  \Fcal \text{ complete flag of flats} \rangle .$$
\end{proposition}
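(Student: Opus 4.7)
The plan is to prove two containments: every prefix chain lies in $\Pcal_p(M)$, and conversely the prefix chains span $\Pcal_p(M)$. For the first containment, I would evaluate $\gamma_{\Fcal,\vv,p}$ against an arbitrary squarefree monomial $h_S = \prod_{e \in S} h_e$ with $|S| \leq p-1$; by the relation $h_e^2 = h_e$ and linearity this suffices. Parameterise $\uu = \vv + \sum_{j=1}^p a_j \dd_j$ with $(a_1,\ldots,a_p) \in \{0,1\}^p$, and note that $h_e(T_\uu) = 1$ if $\uu_e = 0$ and $0$ otherwise; moreover $\uu_e \equiv \vv_e + a_j \pmod{2}$ when $e \in F_j \setminus F_{j-1}$ for some $j \leq p$, while $\uu_e = \vv_e$ when $e \notin F_p$. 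Since the blocks $F_j \setminus F_{j-1}$ for $1 \leq j \leq p$ are nonempty and pairwise disjoint, the hypothesis $|S| < p$ together with pigeonhole yields some index $j^*$ with $(F_{j^*} \setminus F_{j^*-1}) \cap S = \emptyset$. Then $h_S(T_\uu)$ is independent of $a_{j^*}$, and summing $(-1)^{a_{j^*}}$ over $a_{j^*} \in \{0,1\}$ gives $0$; hence $\gamma_{\Fcal,\vv,p} \in \Pcal_p(M)$.

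For the reverse inclusion, I would argue by descending induction on $p$ with trivial base case $\Pcal_{d+1}(M) = 0$. Denham's isomorphism $\abv_p \colon \Pcal_p(M)/\Pcal_{p+1}(M) \xrightarrow{\sim} H_p(\Sal_M;\ZZ)$ identifies the associated graded as a free $\ZZ$-module of rank $b_p(\underline{M})$, so it suffices to produce $b_p(\underline{M})$ prefix chains whose classes form a $\ZZ$-basis of this graded piece. Fix a total order on $E$; for each no-broken-circuit (NBC) independent set $I = \{e_{i_1} < \cdots < e_{i_p}\}$ of $\underline{M}$, pick a complete flag $\Fcal_I$ whose $j$-th flat (for $j \leq p$) is the smallest flat $F_j^I$ containing $\{e_{i_1},\ldots,e_{i_j}\}$, extended arbitrarily above $F_p^I$, together with any tope $T_{\vv_I} \in \Tcal_{\Fcal_I}$. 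Since $I$ is independent, $e_{i_j} \in F_j^I \setminus F_{j-1}^I$, so $I$ is a system of distinct representatives (transversal) of the first $p$ blocks of $\Fcal_I$. The same calculation as above, applied with $|S|=p$, then shows $\gamma_{\Fcal_I,\vv_I,p}(h_J) = \pm 1$ precisely when $J$ is such a transversal and vanishes otherwise; in particular $\gamma_{\Fcal_I,\vv_I,p}(h_I) = \pm 1$. With an appropriate order on NBC sets the pairing matrix $(\gamma_{\Fcal_I,\vv_I,p}(h_J))_{I,J}$ becomes upper unitriangular, hence unimodular, and the $b_p(\underline{M})$ prefix chains are $\ZZ$-linearly independent modulo $\Pcal_{p+1}(M)$ and, by rank comparison, form a $\ZZ$-basis of the associated graded.

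The main obstacle is verifying the triangularity of the pairing matrix: one must exhibit an ordering on NBC sets in which $J \neq I$ being a transversal of the first $p$ blocks of $\Fcal_I$ forces $I < J$. This is essentially the NBC basis theorem for the Orlik--Solomon algebra applied to the rank-$p$ restriction at $F_p^I$, and uses that every initial segment of an NBC set is itself NBC. Everything else reduces to the pigeonhole computation in the first paragraph and a rank comparison, with $\pm 1$ pivots ensuring the argument runs integrally and not only over $\QQ$.
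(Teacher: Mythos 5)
Your first containment (every $p$-th prefix chain lies in $\Pcal_p(M)$) is correct, and the pigeonhole argument you give is exactly the one the paper uses for the slightly more general Proposition~\ref{prop:prefix_to_coor} on affine coordinate chains. Note, though, that the paper does not prove Proposition~\ref{prop:prefixdualdegree}: it is cited from \cite{GV87}, and the paper only illustrates the case $p=1$ in the paragraph that follows. Your reliance on Denham's isomorphism to extract $\rank\bigl(\Pcal_p/\Pcal_{p+1}\bigr)=b_p(\underline{M})$ is also slightly delicate, since the statement of Theorem~\ref{thm:B1} defines $\abv_p$ by linearly extending $\gamma_{\Fcal,\vv,p}\mapsto[\alpha_{\Fcal,\vv,p}]$, which already presupposes the span statement you are trying to prove; you would need to source the rank of the associated graded from the NBC-monomial basis of the Varchenko--Gelfand ring itself, which is part of the same Gelfand--Varchenko theorem.

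The genuine gap is in the construction of the flags $\Fcal_I$. You build $F_j^I$ as the closure of the $j$ \emph{smallest} elements of the NBC set $I$; the paper (see the discussion preceding Lemma~\ref{lem:unique_NBC}) uses the \emph{largest} elements, i.e.\ the flag associated to $S=\{i_1>\cdots>i_p\}$ is $F_j=\langle i_1,\ldots,i_j\rangle$. This reversal matters. Take $\underline{M}=U_{2,3}$ with $E=\{1<2<3\}$: the single circuit is $\{1,2,3\}$, the broken circuit is $\{2,3\}$, and the size-$2$ NBC sets are $\{1,2\}$ and $\{1,3\}$. With your increasing-order convention both give the same flag $F_1=\mathrm{cl}\{1\}=\{1\}$, $F_2=E$, so the two prefix chains $\gamma_{\Fcal_{\{1,2\}},\vv,2}$ and $\gamma_{\Fcal_{\{1,3\}},\vv,2}$ coincide up to sign and certainly do not span the rank-$2$ module $\Pcal_2/\Pcal_3$; equivalently, both $\{1,2\}$ and $\{1,3\}$ are transversals of the blocks $\{1\}$ and $\{2,3\}$, so the pairing matrix is singular and no ordering of NBC sets makes it triangular. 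With the decreasing-order flag, $\Fcal_{\{1,2\}}$ starts with $F_1=\mathrm{cl}\{2\}$ and $\Fcal_{\{1,3\}}$ with $F_1=\mathrm{cl}\{3\}$, which are distinct, and Lemma~\ref{lem:unique_NBC} shows that $I$ is the \emph{unique} size-$p$ NBC set transversal to the blocks of $\Fcal_I$ --- so the pairing matrix is not merely triangular but diagonal with entries $\pm1$, exactly as in Corollary~\ref{coro:NBC_Pp_basis}. Your plan goes through once the flag ordering is reversed and you invoke Lemma~\ref{lem:unique_NBC} (whose proof requires this ordering) in place of the vague appeal to ``initial segments of NBC sets being NBC''.
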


As a simple but helpful illustration, we have \[\Pcal_1(M)=\{\sum_{T\in\Tcal} c_T\cdot T:\sum_{T\in\Tcal} c_T=0\}.\]
We have the $\subset$ containment because each $\gamma_{\Fcal,\vv,p}$ satisfies the linear condition on the right hand side. For the reversed containment, recall that two topes $T,T'$ are {\em adjacent} if they differ over a rank 1 flat $F_1$, hence $T-T'=\gamma_{\Fcal,T,1}$ for any flag $\Fcal$ extending $F_1$.
Since any two topes $T,T'$ are connected by a sequence of adjacency relations \cite[Lemma~4.4.1]{BLSWZ}, the difference $T-T'$ is a sum of $1$-st prefix chains and is in $\Pcal_1$. It is clear that the differences $T-T'$ span the whole right hand side. 

\begin{remark} \label{rem:asym}
\rm
As mentioned in the introduction, another filtration of interest in this picture is the {\em asymptotic filtration}, which we now briefly describe.

The filtration is defined using the {\em Schechtman--Varchenko bilinear form}, which is the bilinear form $B_M(\cdot,\cdot)$ defined over $\ZZ[x_e:e\in E][\Tcal(M)]$ by extending $B_M(T,T')=\prod_{e: T(e)\neq T'(e)}(1+x_e)$ bilinearly (we use a change of variables differs from the usual one for brevity).
The $p$-th piece of the asymptotic filtration of $\ZZ [\Tcal(M)]$ consists of all chains $\gamma$ with the property that, for every $T'\in\Tcal(M)$, the polynomial $B(\gamma, T')\in\ZZ[x_e: e\in E]$ has no terms of degree less than $p$.
\end{remark}

\subsection{Kalinin Filtration} \label{sec:Kalinin}

The $\Ztwo$-tope space can also be filtered via the Kalinin filtration. 
For a topological space $X$ equipped with an involution $c : X \to X$, Kalinin defined a spectral sequence which starts from the homology of $X$ and converges to the homology of the fixed locus $\text{Fix}(c) \subset X$ \cite{Kalinin}, see also \cite{Degtyarev}. 

The first page of Kalinin spectral sequence has terms $E^1_{\bullet} = H_{\bullet}(X)$ and differentials $d^1_{\bullet} = 1 + c_*$.
The further pages have terms $$E^{r}_{p} = \ker d^{r-1}_p / \Image d^{r-1}_{p-r+1}.$$
A cycle $x_p$ is in $\ker d^r_p \subset H_p(X)$ if and only if there exist chains $$y_p =x_p, y_{p+1} , \dots, y_{p+r-1} $$ so that $\partial y_{i+1} = (1+c_*) y_i. $ When such chains exist, the differential on the $r$-th page is the map $d^r_{p}$ defined on the representative $x_p$ by $d_p^r x_p = (1 + c_*) y_{p+r-1}$.

In our context, the space $X$ under consideration is the Salvetti complex appearing in Proposition \ref{prop:salComplex} and the involution is conjugation action from Definition \ref{def:conj}. Thus the fixed locus consists of the collection of vertices of the Salvetti complex in correspondence with the topes. Hence the fixed locus only has homology in degree $0$ and $H_{\bullet}(\text{Fix}(c); \Ztwo) = H_{0}(\text{Fix}(c); \Ztwo) = \Ztwo[\mathcal{T}(M)]$. We use this to simplify the presentation of the Kalinin spectral sequence below and obtain the Kalinin filtration. 

\begin{proposition}\label{prop:Kalinin}
For the Salvetti complex of an oriented matroid, the Kalinin spectral sequence degenerates at the first page. Moreover, associated gradeds of the spectral sequence induce a filtration of the $\Ztwo$-tope space: 
$$\Kcal_d(M) \subset \dots \subset \Kcal_1(M) \subset \Kcal_0(M) = \Ztwo[\Tcal(M)]$$ such that $\Kcal_p(M) / \Kcal_{p+1}(M)$ is isomorphic to $H_p(\Sal_M; \Ztwo)$ for all $p$.
\end{proposition}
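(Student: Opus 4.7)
The plan is to deduce degeneration of the Kalinin spectral sequence at $E^1$ from a Smith--Thom dimension count, and then read off the desired filtration from the abutment. The underlying general principle is that since each $E^{r+1}$ is a subquotient of $E^r$ and $E^\infty$ is the graded of a filtration on $H_*(\text{Fix}(c); \Ztwo)$, one always has $\sum_p \dim H_p(X; \Ztwo) \geq \sum_p \dim H_p(\text{Fix}(c); \Ztwo)$, with equality if and only if every differential $d^r$ vanishes. Applied to $X = \Sal_M$ with the conjugation involution, this reduces the degeneration statement to the combinatorial identity $\sum_p \dim H_p(\Sal_M; \Ztwo) = |\Tcal(M)|$.

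Next, I would verify this identity using two inputs already in the excerpt: Definition~\ref{def:conj} identifies $\text{Fix}(c)$ with the discrete vertex set $\{Z(T,T) : T \in \Tcal(M)\}$, of cardinality $|\Tcal(M)|$; and the isomorphism $H^\bullet(\Sal_M; \Ztwo) \cong \OS^\bullet(\underline{M}; \Ztwo)$ from \cite{BZ92}, combined with Zaslavsky's enumeration~\eqref{Zaslavsky}, gives $\sum_p \dim H_p(\Sal_M; \Ztwo) = \sum_p b_p(\underline{M}) = |\Tcal(M)|$. The two totals agree, so the spectral sequence degenerates at $E^1$.

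Finally, because $\text{Fix}(c)$ is $0$-dimensional, its homology is concentrated in degree zero and equals $\Ztwo[\Tcal(M)]$. The induced filtration on this abutment is precisely the Kalinin filtration $\Kcal_d \subset \cdots \subset \Kcal_1 \subset \Kcal_0 = \Ztwo[\Tcal(M)]$, and degeneration supplies the identifications $\Kcal_p / \Kcal_{p+1} \cong E^\infty_p = E^1_p = H_p(\Sal_M; \Ztwo)$ for every $p$.

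The main obstacle is really the classical fact, going back to Smith and Borel, that saturation of the Smith--Thom inequality is equivalent to degeneration of the Kalinin spectral sequence at $E^1$; I would cite this from \cite{Kalinin} or \cite{Degtyarev} rather than reprove it. A minor bookkeeping check, that the filtration indices are arranged so that $\Kcal_p/\Kcal_{p+1}$ corresponds to $E^\infty_p$ in the direction stated in the proposition, follows directly from the description of the spectral sequence given in the excerpt.
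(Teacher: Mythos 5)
Your proposal is correct and follows essentially the same route as the paper: both deduce degeneration at $E^1$ from saturation of the Smith--Thom inequality, which is obtained by comparing $|\Tcal(M)|$ (the size of the fixed locus) against $\dim H_\bullet(\Sal_M;\Ztwo)$ via Zaslavsky's identity~\eqref{Zaslavsky}. The paper's proof is a compressed version of yours; you simply make the general monotonicity-of-total-dimension argument slightly more explicit before invoking the dimension count.
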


\begin{proof}
The dimension of $H_{\bullet}(\text{Fix}(c); \Ztwo)$ is equal to the number of topes of $M$, which by Zavlasky's theorem (Equation (\ref{Zaslavsky})) is equal to the total dimension $H_{\bullet}(\Sal_M; \Ztwo)$.
Hence $\Sal_M$ is maximal in the sense of the Smith--Thom inequality, and 
\[ |\Tcal(M)| = \dim H_{\bullet}(\text{Fix}(c); \Ztwo) = \dim  H_{\bullet}(\Sal_M; \Ztwo). \]
Therefore, the maps $d_{\bullet}^p$ must all be zero and the Kalinin spectral sequence degenerates at the first
page. 
Since $\Ztwo[\Tcal(M)]$ is canonically isomorphic to $H_{\bullet}(\text{Fix}(c); \Ztwo)$ we obtain a filtration of the tope space. 
\end{proof}

The isomorphism between the intermediate gradeds of the filtration and the homology groups of the Salvetti complex from the above proposition are given by the Viro homomorphisms \cite{Degtyarev} as below.

\begin{definition} \label{def:Kalinin_Z2Z}
Let $M$ be an oriented matroid of rank $d$ and let $\mathcal{T}$ denote its collection of topes. 
The {\em (Borel--)Viro homomorphisms}
$$bv_p:\Kcal_{p}(M) \rightarrow H_p(\Sal_M;\Ztwo)$$ are defined recursively by 

\begin{itemize}
\item $bv_0$ is the induced inclusion map $i_{\ast} : H_0(\text{Fix}(c) ;\Ztwo) \to H_0(\Sal_M; \Ztwo)$,
\item $bv_p:\Kcal_{p}(M) \rightarrow H_p(\Sal_M;\Ztwo)$ 
is given by $bv_p(\gamma) = 
[\beta_p+\overline{\beta_p}]$,
where $\beta_p$ is any $p$-chain whose boundary is $bv_{p-1}(\gamma)$ in $\Sal_M$.
\end{itemize}
\end{definition}

We have $\gamma \in\Kcal_p(M)$ if for $i = 1, \ldots , p$ there exists $i$-chains $\beta_i \in C_i(\Sal_M; \Ztwo)$ such that $\partial\beta_1= i_{\ast}\gamma$ and $\partial\beta_i=\beta_{i-1}+\overline{\beta_{i-1}}$ for $i=2,\ldots,p$.

The Viro homomorphism descends to a map $$bv_p:\Kcal_p(M)/ \Kcal_{p+1} (M) \rightarrow H_p(\Sal_M;\Ztwo).$$ This arises as the isomorphism of the $E^{\infty}$ page of the Kalinin spectral sequence with the associated gradeds. This isomorphism has the above form since the Kalinin spectral sequence degenerates at the first page by Proposition \ref{prop:Kalinin}, and the homology of $\text{Fix}(c)$ is only non-zero in dimension $0$. 
To see directly that the definition of $bv_p$ does not depend on the choices of $\beta_i$'s:
suppose by induction that $\beta_{i-1}+\overline{\beta_{i-1}}$ is well-defined, then any two choices of $\beta_i$ differ by some homology class, while by \cite[Corollary~A.2]{Wil78}, the conjugation acts as an identity on $H_i(\Sal_M;\Ztwo)$, so the difference of the two candidates $\beta_i$ cancels out upon adding to their conjugates.

\begin{example} \rm
In the Salvetti complex corresponding to the oriented matroid of the arrangement in Example \ref{ex:SalvettiU23}, the topes $T_1$ and $T_2$ differ by exactly by the first coordinate, and correspond to two vertices in $\text{Fix}(c) \subset \Sal_M$.
The covector $\zeta_1$ has $0$ in its first coordinate and otherwise agrees with $T_1$ and $T_2$. 
The $1$-dimensional cells $Z(\zeta_1,T_1)$ and $Z(\zeta_1,T_2)$ satisfy $\partial Z(\zeta_1,T_1) = \partial Z(\zeta_1,T_2) = T_1 + T_2 \in C_0(\Sal_M; \Ztwo)$ and also $\overline{Z(\zeta_1,T_1)} = Z(\zeta_1,T_2)$.
Therefore, to find $bv_1(\gamma)$ we can choose $\beta_1$ to be either $Z(\zeta_1,T_1)$ or $Z(\zeta_1,T_2)$ and $bv_1(\gamma) = Z(\zeta_1,T_1) + Z(\zeta_1,T_2) \in H_1(\Sal_M; \Ztwo)$.

Analogously, $bv_1(T_1+T_2+T_4+T_5)=[Z(\zeta_1,T_1)+Z(\zeta_1,T_2)+Z(\zeta_4,T_4)+Z(\zeta_4,T_5)]$. In order to find $bv_2(T_1+T_2+T_4+T_5)$, we can pick $\beta_2=Z(O,T_1)+Z(O,T_2)$, and $bv_2(T_1+T_2+T_4+T_5)=[Z(O,T_1)+Z(O,T_2)+Z(O,T_4)+Z(O,T_5)]$.

We can also consider Kalinin spectral sequence in the situation of a real hyperplane arrangement and its complexification. 
Let $\mathbb{C} \Ccal$ denote the complement of the complexification of the arrangement from Example \ref{ex:SalvettiU23}. The real complement $\mathbb{R} \Ccal$ is the fixed locus of the action of complex conjugation acting on $\mathbb{C} \Ccal$, and consists of the six regions depicted in Figure \ref{diag:U23}. 
Consider points $x = (d, c) \in A$ and $ y = (-d, c) \in B$, where here $T_1$ and $T_2$ are denoted the regions labeled in Figure \ref{diag:U23}. Let $\gamma = x+y \in C_0(\mathbb{R}^2; \Ztwo)$. Note that $\gamma$ is non-zero in $H_0(\mathbb{R} \Ccal; \Ztwo)$.
Then $ \beta_1 = \{ ( e^{i\theta}, c) \ | \ \theta \in [0, \pi] \}$ 
forms a $1$-chain in the complexification 
whose boundary is $\gamma$. Therefore, $\gamma$ is zero in $H_0(\mathbb{C} \Ccal; \Ztwo)$ and so $bv_0 (\gamma) = 0$.
Taking the sum of $\beta_1$ with its complex conjugate yields
$\beta_1 + \overline{\beta_1} = \{ (e^{i\theta}, c) \ | \ \theta \in [0, 2\pi ) \}$, which is now a closed $1$-chain, as it is a circle embedded in $\mathbb{C} ^2$. Therefore, $bv_1(\gamma) = \beta_1 + \overline{\beta_1} \in H_1( \mathbb{C} \Ccal; \Ztwo)$. 
\end{example}

\subsection{Quillen Filtration} \label{sec:Quillen}

Recall that the set $\{+, -\}^E$ carries a vector space structure via the bijection $( \Ztwo)^E \to \{+, -\}^E$, given by ${\bf v} \mapsto   T_{\bf v}$ where $(T_{\bf v})_e := (-1)^{{\bf v}_e}$. Here we abuse notation and identify $\pm 1$ with $\pm$. 
Notice that the additive structure on $\Ztwo$ translates to the multiplicative structure on $\{+, -\}$.
\begin{definition}
Let $V$ be a $d$-dimensional vector space over $\FF_2$.
Consider the group algebra $\Ical_0(V):=(\ZZ/2\ZZ)[V]=(\ZZ/2\ZZ)[T_\vv:\vv\in V]$, where multiplication is given by $T_{\vv_1}T_{\vv_2}=T_{\vv_1+\vv_2}$.
For a subset $G\subset V$, denote by $\gamma_G$ the sum $\sum_{\vv\in G} T_\vv$.

Define the {\em Quillen filtration} of $\Ical_0(V)$ as follows: $$\Ical_1(V) = \{\sum_\vv a_\vv T_\vv: \sum_\vv a_\vv=0\}$$ is the {\em augmentation ideal} of $V$, whereas further pieces are defined as $$\Ical_p(V):=\Ical_1^p(V)=\langle t_1 t_2\ldots t_p: t_i\in\Ical_1(V)\rangle.$$
\end{definition}

\begin{proposition} \cite[Lemma~4.1 and Proposition~4.3]{RS} \label{prop:SS_generate_Quillen}
The vector space $\Ical_p(V)$ is additively generated by $\gamma_G$'s, ranging over all $p$-dimensional subspaces $G$ of $V$.
Furthermore, $\dim\Ical_p(V) - \dim\Ical_{p+1}(V) = \dim\bigwedge^p V = \binom{d}{p}$.
\end{proposition}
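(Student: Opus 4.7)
The plan is to identify $(\Ztwo)[V]$ with an exterior algebra over $\FF_2$ and to deduce both claims by a direct computation. I would introduce the shifted generators $y_\vv := T_\vv + 1 \in \Ical_1(V)$ for $\vv \in V$, which span $\Ical_1(V)$ since any $\sum_\vv a_\vv T_\vv$ with $\sum_\vv a_\vv = 0$ can be rewritten as $\sum_\vv a_\vv y_\vv$. Two elementary observations drive the proof. First, since $2\vv = 0$ in $V$, we have $(T_\vv + 1)^2 = T_{2\vv} + 1 = 0$, so $y_\vv^2 = 0$. Second, $y_\vv y_\uu = T_{\vv+\uu} + T_\vv + T_\uu + 1 = y_{\vv+\uu} + y_\vv + y_\uu$, which implies that the class $[y_\vv] \in \Ical_1(V)/\Ical_2(V)$ is additive in $\vv$.

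Next, fix a basis $\ee_1, \ldots, \ee_d$ of $V$ and write $y_i := y_{\ee_i}$. The natural isomorphism $(\Ztwo)[V] \cong \FF_2[T_{\ee_1}, \ldots, T_{\ee_d}]/(T_{\ee_i}^2 - 1)$, combined with the substitution $T_{\ee_i} = y_i + 1$, yields an algebra isomorphism
\[ (\Ztwo)[V] \;\cong\; \FF_2[y_1, \ldots, y_d]/(y_i^2), \]
under which $\Ical_1(V)$ corresponds to the maximal ideal $(y_1, \ldots, y_d)$. Consequently $\Ical_p(V) = (y_1, \ldots, y_d)^p$ has basis consisting of the squarefree monomials $y_{i_1}\cdots y_{i_k}$ with $i_1 < \cdots < i_k$ and $k \geq p$, from which the second claim $\dim\Ical_p(V) - \dim\Ical_{p+1}(V) = \binom{d}{p}$ is immediate.

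For the first claim, $\Ical_p(V) = \Ical_1(V)^p$ is spanned by products $y_{\vv_1}\cdots y_{\vv_p} = \prod_i(T_{\vv_i}+1)$, and expanding gives
\[ y_{\vv_1}\cdots y_{\vv_p} = \sum_{S \subseteq \{1,\ldots,p\}} T_{\sum_{i \in S}\vv_i}. \]
If $\vv_1, \ldots, \vv_p$ are linearly independent and span $G$, then every element of $G$ appears exactly once and the product equals $\gamma_G$. Otherwise, if they span a subspace of dimension $k < p$, the surjection $\FF_2^p \twoheadrightarrow \langle \vv_1, \ldots, \vv_p\rangle$ has fibres of size $2^{p-k}$, so each group element is counted an even number of times and the product vanishes modulo $2$. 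Hence $\Ical_p(V)$ is spanned by $\gamma_G$ as $G$ ranges over the $p$-dimensional subspaces of $V$.

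The argument is essentially routine once the exterior-algebra identification is made; the key computational point is the squarefree-monomial cancellation in the last display, which simultaneously accounts for the filtration stabilising at step $d+1$ and for the graded pieces having the expected dimensions.
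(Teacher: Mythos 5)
Your proof is correct. The identification of $(\Ztwo)[V]$ with the square-free polynomial algebra $\FF_2[y_1,\dots,y_d]/(y_i^2)$ via $y_\vv = T_\vv + 1$ is exactly the standard mechanism underlying Quillen's augmentation filtration, and it is also what makes the paper's isomorphism $\bigwedge^p V \cong \Ical_p(V)/\Ical_{p+1}(V)$ (invoked in the proof of Theorem~\ref{thm:sign_cosheaf_dim}) transparent; the cited reference proves the two statements in essentially this way. The three small computations you isolate — $y_\vv^2 = 0$, linearity of $[y_\vv]$ modulo $\Ical_2$, and the expansion $y_{\vv_1}\cdots y_{\vv_p} = \sum_{S} T_{\sum_{i\in S}\vv_i}$ with its even-fibre cancellation when the $\vv_i$ are dependent — are exactly the ingredients needed, and the dimension count $\dim\Ical_p - \dim\Ical_{p+1} = \binom{d}{p}$ drops out of the square-free monomial basis. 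No gaps.
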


By Proposition~\ref{prop:T_F_structure}, given a complete flag $\Fcal$, after selecting an origin, we can identify $\Tcal_\Fcal$ with a vector subspace and consider the Quillen filtration $\Ical_p(\Tcal_\Fcal)$'s.
By \cite[Lemma~4.4]{RS}, the filtration obtained does not depend on the choice of origin. 

\begin{definition}
Let $M$ be an oriented matroid with topes $\Tcal$.
Denote $\Qcal_0(M):=(\ZZ/2\ZZ)[\Tcal]$. 

Using the natural inclusion of $(\ZZ/2\ZZ)[\Tcal_\Fcal]$ into $\Qcal_0(M)$, we can define the Quillen filtration on $\Qcal_0(M)$ by $$\Qcal_p(M):=\sum_\Fcal \Ical_p(\Tcal_\Fcal),$$
where $\Fcal$ ranges over all complete flags of flats of $M$.
\end{definition}

By Proposition \ref{prop:SS_generate_Quillen}, the Quillen filtration of the tope space is equivalently
$$\Qcal_p(M) = \langle \gamma_{ U} \ | \ U \in \text{Aff}_p(\Tcal_{\Fcal}) \rangle,$$
where $\Fcal$ ranges over all complete flags of flats of $M$, $\text{Aff}_p(\Tcal_{\Fcal})$ denotes the set of affine subspaces of the affine space $\Tcal_{\Fcal}$
 of dimension $p$, and $\gamma_U =\sum_{\mathbf{u} \in U} T_{\mathbf{u}} \in \Ztwo[\Tcal(M)]$. 
 Note that $\gamma_U$ is the mod $2$ reduction of $\gamma_{U, \mathcal{B}, \vv}$ for any choice of $\mathcal{B}$ and $\vv$.

 In addition, each piece of the Quillen filtration comes with a map to the dual of the Orlik--Solomon algebra as described in Proposition \ref{prop:OSZhar}. These maps are denoted by $\qbv_p \colon \Qcal_p(M) \to \OS_p(\underline{M}; \Ztwo)$ and are defined on the generators 
 $\{\gamma_{ U} \ | \ U \in \text{Aff}_p(\Tcal_{\Fcal}) \} $
 via 
 \begin{equation} \label{eq:defqp} 
 \qbv_p(\gamma_{ U}) = \vv_1 \wedge \dots \wedge \vv_p, 
 \end{equation}
 where $\vv_1, \dots, \vv_p \in (\ZZ/2\ZZ)^E$ form a basis of the tangent space of the affine space $U$.

\begin{theorem} \label{thm:sign_cosheaf_dim}
The maps ${\qbv}_p \colon \Qcal_p(M) \to \OS_p(\underline{M}; \Ztwo)$
are well-defined and descend into isomorphisms between $\Qcal_p(M)/\Qcal_{p+1}(M)$ and $\OS_p(\underline{M};\Ztwo)$.
In particular, for each $p$, \[\dim\Qcal_p(M) - \dim\Qcal_{p+1}(M)= b_p(M).\]
\end{theorem}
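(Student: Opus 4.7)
The plan is to recognise $\qbv_p$ as the restriction to $\Qcal_p(M)$ of a classical, ambient Quillen isomorphism, which makes well-definedness nearly automatic, and then close out the isomorphism statement by a dimension count using Zaslavsky's formula.

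First, I would observe that every generator $\gamma_U$ with $U \in \mathrm{Aff}_p(\Tcal_\Fcal)$ factors as $\gamma_U = T_\vv \prod_{i=1}^p (1 + T_{\vv_i})$, where $\vv, \vv+\vv_1,\ldots,\vv+\vv_p$ is an affine basis of $U$; over $\Ztwo$ this equals $T_\vv \prod_{i=1}^p (T_{\vv_i} - 1)$, placing $\gamma_U$ in the classical Quillen ideal $\Ical_p((\Ztwo)^E)$ inside the ambient group algebra $(\Ztwo)[(\Ztwo)^E]$. Hence $\Qcal_p(M) \subset \Ical_p((\Ztwo)^E)$, and the composition
\[ \Qcal_p(M) \hookrightarrow \Ical_p((\Ztwo)^E) \twoheadrightarrow \Ical_p/\Ical_{p+1}\bigl((\Ztwo)^E\bigr) \xrightarrow{\sim} \bigwedge\nolimits^p (\Ztwo)^E \]
(the last arrow being the classical Quillen isomorphism for the vector space $(\Ztwo)^E$) gives a well-defined $\Ztwo$-linear map on $\Qcal_p(M)$. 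By inspection its value on $\gamma_U$ matches the formula (\ref{eq:defqp}), which is in turn independent of basis since every invertible matrix over $\Ztwo$ has determinant $1$.

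Next I would verify that the image lies in $\OS_p(\underline{M};\Ztwo) \subset \bigwedge^p (\Ztwo)^E$. By Proposition~\ref{prop:T_F_structure} the tangent space of $U$ is a subspace of $\langle \ff_1,\ldots,\ff_d\rangle$, so each image wedge expands as a $\Ztwo$-combination of wedges $\ff_{i_1}\wedge\cdots\wedge\ff_{i_p}$ over subchains of flats of $\Fcal$. Each such wedge lies in $\OS_p$ by a direct check against the Orlik--Solomon circuit relations; alternatively, telescoping $\ff_k = \sum_{j\leq k} \dd_j$ reduces the check to the consecutive-rank case handled by Proposition~\ref{prop:OSZhar}. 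This delivers the desired map $\qbv_p\colon \Qcal_p(M) \to \OS_p(\underline{M};\Ztwo)$.

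The containment $\Qcal_{p+1}(M) \subset \ker \qbv_p$ is now automatic: the same factorisation places $\Qcal_{p+1}(M)$ inside $\Ical_{p+1}((\Ztwo)^E)$, which is precisely the kernel of the Quillen projection. Surjectivity of the induced map $\overline{\qbv}_p \colon \Qcal_p(M)/\Qcal_{p+1}(M) \to \OS_p(\underline{M};\Ztwo)$ follows from Proposition~\ref{prop:OSZhar}: every generator $\ff_1\wedge\cdots\wedge\ff_p$ is realised as $\qbv_p(\gamma_U)$ for $U = \vv + \langle \dd_1,\ldots,\dd_p\rangle$ with any $T_\vv \in \Tcal_\Fcal$. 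This gives $\dim \Qcal_p(M)/\Qcal_{p+1}(M) \geq b_p(\underline{M})$ for every $p$, while Zaslavsky's identity (\ref{Zaslavsky}) yields
\[ \sum_p \dim \Qcal_p(M)/\Qcal_{p+1}(M) = \dim \Qcal_0(M) = |\Tcal(M)| = \sum_p b_p(\underline{M}), \]
forcing equality at each level, so every $\overline{\qbv}_p$ is an isomorphism. The main obstacle I foresee is the second step: verifying that the wedges $\ff_{i_1}\wedge\cdots\wedge\ff_{i_p}$ over subchains genuinely land in $\OS_p$ rather than in a larger subspace of $\bigwedge^p (\Ztwo)^E$; once this is secured, the remaining pieces assemble without further work.
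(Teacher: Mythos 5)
Your approach is genuinely different from the paper's and is essentially sound, though the one step you yourself flagged is where the argument as written does not quite close. The paper builds the map in the opposite direction: it starts from the local Quillen isomorphisms $\bigwedge^p\Tcal_\Fcal \xrightarrow{\sim} \Ical_p(\Tcal_\Fcal)/\Ical_{p+1}(\Tcal_\Fcal)$ for each complete flag, sums these over flags to obtain a surjection $\OS_p(\underline{M};\Ztwo)\to\Qcal_p(M)/\Qcal_{p+1}(M)$, uses the dimension count to upgrade it to an isomorphism, and then \emph{defines} $\qbv_p$ as the projection followed by the inverse. Your route instead factors $\gamma_U = T_\vv\prod_i(1+T_{\vv_i})$, places $\Qcal_p(M)$ inside the ambient $\Ical_p((\Ztwo)^E)$, and composes with the classical Quillen projection to $\bigwedge^p(\Ztwo)^E$. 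This makes well-definedness of $\qbv_p$ and the containment $\Qcal_{p+1}(M)\subset\ker\qbv_p$ essentially automatic, which is a real advantage over the paper's "tautologically well-defined via the inverse" bookkeeping.

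The gap is precisely the one you foresaw, and your proposed fix does not work as stated. Telescoping $\ff_k=\sum_{j\le k}\dd_j$ expands a subchain wedge $\ff_{i_1}\wedge\cdots\wedge\ff_{i_p}$ into a sum of $\dd_{j_1}\wedge\cdots\wedge\dd_{j_p}$ with \emph{arbitrary} increasing index sets $j_1<\cdots<j_p$, not the single consecutive-rank wedge $\dd_1\wedge\cdots\wedge\dd_p$. Proposition~\ref{prop:OSZhar} as stated only certifies that the consecutive-rank wedges span $\OS_p$; it does not say that a general $\dd_{j_1}\wedge\cdots\wedge\dd_{j_p}$ (equivalently, a flag wedge $\ee_{G_1}\wedge\cdots\wedge\ee_{G_p}$ with non-consecutive ranks) lies in $\OS_p$. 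That containment is true — it is essentially the content of Zharkov's lemma in the form "$\OS_p$ equals the sum of $\bigwedge^p$ of the tangent spaces of the maximal cones of the matroid fan" — but it is a nontrivial fact requiring either a direct check against the circuit relations or an appeal to the stronger form of that lemma, which you do not carry out.

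Fortunately the gap is logically harmless and you can simply delete it. Let $\psi_p:\Qcal_p(M)\to\bigwedge^p(\Ztwo)^E$ be the composite you constructed, without any claim about where its image lands. You already showed $\Qcal_{p+1}(M)\subset\ker\psi_p$, so $\psi_p$ descends to $\Qcal_p/\Qcal_{p+1}$, giving $\dim\Image\psi_p\le\dim\Qcal_p/\Qcal_{p+1}$. You also showed $\Image\psi_p\supseteq\OS_p$ by hitting the consecutive-rank generators of Proposition~\ref{prop:OSZhar}, so $\dim\Qcal_p/\Qcal_{p+1}\ge b_p$. Now Zaslavsky's identity forces $\dim\Qcal_p/\Qcal_{p+1}=b_p$ for all $p$, whence $\dim\Image\psi_p=b_p=\dim\OS_p$, and since $\Image\psi_p\supseteq\OS_p$ you get $\Image\psi_p=\OS_p$ exactly. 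This simultaneously establishes that $\qbv_p$ lands in $\OS_p$, that it kills $\Qcal_{p+1}$, and that the induced map on the graded piece is an isomorphism — all without ever needing to verify that general subchain wedges lie in $\OS_p$ by hand.
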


\begin{proof}
The proof follows closely the arguments given in \cite[Proposition~4.3 and Lemma~4.8]{RS}, which were applied for the uniform matroid $U_{n, n+1}$ and were shown to work for all oriented matroids in \cite[Proposition 5.7]{RRS2}. 
Since the statement in the last reference is a stronger result in terms of cellular cosheaves, here we overview the arguments and leave the reader to \cite{RS} and \cite{RRS2} for the detailed calculation.

It can be shown that the map $\vv_1 \wedge \dots \wedge \vv_p\mapsto [\gamma_U]\in\Ical_p(\Tcal_\Fcal)/\Ical_{p+1}(\Tcal_\Fcal)$, where $U$ is the span of $\vv_i$'s, induces an isomorphism between between $\bigwedge^p\Tcal_\Fcal$ and $\Ical_p(\Tcal_\Fcal)/\Ical_{p+1}(\Tcal_\Fcal)$ (indeed, the isomorphism is a linear algebra fact that remains valid when $\Tcal_\Fcal$ is replaced by any vector space over $\Ztwo$).
Summing the isomorphisms over all complete flags $\Fcal$ and applying Proposition~\ref{prop:OSZhar} gives a map between $\OS_p(\underline{M};\Ztwo)$ and $\Qcal_p(M)/\Qcal_{p+1}(M)$; the map is surjective and thus an isomorphism by a dimension count.
The map $\qbv_p$ can be described as the composition of the projection map $\Qcal_p(M)\rightarrow\Qcal_p(M)/\Qcal_{p+1}(M)$ and the {\em inverse} of this isomorphism, hence it is well-defined, and the isomorphism claim is tautological.
\end{proof}

\begin{example} \label{ex:Quillen_no_Z} \rm
While the Quillen filtration can be defined for the group algebra with coefficients from any commutative ring, if taken over $\ZZ$ it does not provide a filtration with ranks corresponding to those over $\Ztwo$.

Consider the uniform matroid $M=U_{2,2}$, which has four topes $T_{00},T_{01},T_{10},T_{11}$.
The augmentation ideal in the Quillen filtration over $\ZZ$ for each complete flag $\Fcal$ is spanned by $T_{00}-T_{01},T_{00}-T_{10}, T_{00}-T_{11}$, hence the corresponding $\Qcal_2(\Tcal_\Fcal)$ contains
\begin{eqnarray*}
(T_{00}-T_{01})(T_{00}-T_{10})=T_{00}-T_{01}-T_{10}+T_{11},\\ 
(T_{00}-T_{01})(T_{00}-T_{11})=T_{00}-T_{01}+T_{10}-T_{11},\\
(T_{00}-T_{10})(T_{00}-T_{11})=T_{00}+T_{01}-T_{10}-T_{11},
\end{eqnarray*}
which span a subgroup of rank 3 instead of 1.
In fact, one can see that the filtration never stabilises.
\end{example}

\section{Comparison of the Filtrations}

Our goal in this section is to show that all three filtrations of the $\Ztwo$-tope space of an oriented matroid are the same.
Our strategy is to first show that the Quillen filtration is contained in the reduction of the dual degree filtration modulo $2$. 
To do this we use the prefix chain description of the dual degree filtration from Proposition \ref{prop:prefixdualdegree}. 
Then we show that the dual degree filtration is contained in the Kalinin filtration.

Therefore, we find the following sequence of inclusions of the filtrations of the $\Ztwo$-tope spaces
\[\Qcal_{\bullet}(M) \subseteq \overline{\Pcal}_{\bullet}(M) \subseteq \Kcal_{\bullet}(M).\]
We then compare the dimensions of $\Qcal_p(M)$ and $\Kcal_p(M)$ and conclude that all three filtrations are equal. 

\subsection{The Dual VG Filtration Contains the Quillen Filtration}

We show that prefix chains are sufficient to span the generators of the Quillen filtration.
As an intermediate step, we consider affine spaces $U$ of $\Tcal_\Fcal$ that are parallel to subspaces spanned by a subset of the basis $\Bcal=\{\dd_{\Fcal,1},\ldots,\dd_{\Fcal,d}\}$, which we dub as the {\em affine coordinate subspaces} of $\Tcal_\Fcal$; they correspond to {\em affine coordinate chains} $\gamma_{U,\Bcal,\vv}$.

Whenever an affine coordinate subspace or chain is being considered, the flag of flats it is associated with would be clear and we always work with the basis consisting of $\dd$'s, so for brevity, we do not define the bases in the proofs.

Since some of the results here are also applied in Section~\ref{sec:cosheaf}, we work over $\ZZ$ when dealing with the dual degree filtration.

\begin{proposition} \label{prop:prefix_to_coor}
Let $\Fcal:\emptyset=F_0 \subsetneq  F_1 \subsetneq  \ldots \subsetneq  F_d=E$ be a complete flag of flats of $M$.
Then for any $p$-dimensional affine coordinate subspace $U$ of $\Tcal_\Fcal$ and $\vv\in U$, $\gamma_{U,\Bcal,\vv}$ is contained in $\Pcal_p(M)$.
\end{proposition}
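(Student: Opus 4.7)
The plan is to prove $\gamma_{U, \Bcal, \vv} \in \Pcal_p(M)$ by pairing directly against Heaviside monomials of degree $<p$. Since $\Pcal^{p-1}(M)$ is additively generated by monomials $h_{e_1} \cdots h_{e_q}$ with $q < p$ (including the empty monomial $\mathbf{1}$), and $\Pcal_p(M)$ is by definition the annihilator of $\Pcal^{p-1}(M)$, it suffices to show that $(h_{e_1} \cdots h_{e_q})(\gamma_{U, \Bcal, \vv}) = 0$ for every such monomial.

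First I would set up the bookkeeping. Write $I = \{i_1 < \cdots < i_p\} \subseteq \{1, \ldots, d\}$ so that $U = \vv + \langle \dd_{i_1}, \ldots, \dd_{i_p}\rangle$, and expand
\[
\gamma_{U, \Bcal, \vv} = \sum_{S \subseteq \{1, \ldots, p\}} (-1)^{|S|} T_{\vv + \sum_{j \in S} \dd_{i_j}}.
\]
Because $M$ is loopless and $\Fcal$ is complete, the subsets $F_i \setminus F_{i-1}$ partition $E$, so each $e \in E$ has a unique block index $m(e) \in \{1, \ldots, d\}$ with $e \in F_{m(e)} \setminus F_{m(e)-1}$. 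A short calculation then shows that $h_e\bigl(T_{\vv + \sum_{j \in S} \dd_{i_j}}\bigr)$ is independent of $S$ whenever $m(e) \notin I$, while for $m(e) = i_j$ the Heaviside value equals $1$ precisely when the membership of $j$ in $S$ agrees with a parity condition fixed by $\vv_e$.

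Next I would analyze a fixed monomial $f = h_{e_1} \cdots h_{e_q}$ by introducing $J = \{j \in \{1, \ldots, p\} : m(e_k) = i_j \text{ for some } k\}$, the set of positions of $I$ hit by the indices. The factors with $m(e_k) \notin I$ contribute a constant $C \in \{0,1\}$ independent of $S$; the factors with $m(e_k) \in I$ either impose inconsistent constraints (in which case the whole pairing is zero) or pin $S \cap J$ down to a single subset $S^\ast \subseteq J$, while $S \cap (\{1, \ldots, p\} \setminus J)$ remains unconstrained. The sum therefore collapses to
\[
C \cdot (-1)^{|S^\ast|} \sum_{S' \subseteq \{1, \ldots, p\} \setminus J} (-1)^{|S'|}.
\]
Since $|J| \leq q < p$, the complement $\{1, \ldots, p\} \setminus J$ is nonempty and the alternating sum vanishes, giving $f(\gamma_{U, \Bcal, \vv}) = 0$ as required.

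The argument is elementary once the block-partition structure of the flag is in hand, so the main obstacle is notational rather than conceptual: carefully tracking how each factor $h_{e_k}(T_\uu)$ depends on $S$ through the block index $m(e_k)$, distinguishing the consistent and inconsistent cases of the constraints on $S \cap J$, and then extracting the strict inequality $|J| \leq q < p$ to activate the cancellation $\sum_{S' \subseteq \{1, \ldots, p\} \setminus J} (-1)^{|S'|} = 0$.
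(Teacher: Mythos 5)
Your proof is correct and takes essentially the same approach as the paper's: reduce to pairing against Heaviside monomials of degree $<p$, then exploit the fact that the $p$ blocks $F_{i_j}\setminus F_{i_j-1}$ are pairwise disjoint so that fewer than $p$ indices $e_k$ cannot hit all of them. The paper phrases the cancellation as a pigeonhole argument selecting one free block $\dd_{i_j}$ and pairing topes as $\{\uu,\uu+\dd_{i_j}\}$, while you extract the full vanishing alternating sum $\sum_{S'\subseteq\{1,\ldots,p\}\setminus J}(-1)^{|S'|}$ over all free directions; these are the same cancellation expressed at different levels of bookkeeping.
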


\begin{proof}
By the definition of $\Pcal_p(M)$ using Heaviside functions, it suffices to show that for any $e_1,\ldots,e_{p-1}\in E$, $(h_{e_1}\ldots h_{e_{p-1}})(\gamma_{U,\Bcal,\vv})=0$.
Suppose $U$ is parallel to $\langle\dd_{i_1},\ldots,\dd_{i_p} \rangle$.
Then there exists $1\leq j\leq p$ such that $\dd_{i_j}$ does not contain any of $e_i$'s, pairing up the topes in $U$ as $\{\uu,\uu+\dd_{i_j}\}$'s, evaluating the two topes in each pair by $h_{e_1}\ldots h_{e_{p-1}}$ produces equal value, but the two topes are of opposite signs in $\gamma_{U,\Bcal,\vv}$.
\end{proof}

We now work with $\Ztwo$-coefficients and we can identity an affine subspace $U$ with a chain $\gamma_U$ without specifying the original $\vv$. Our next step is to get all affine subspaces from affine coordinate subspaces.

\begin{proposition} \label{prop:coor_to_all}
Let $V$ be a $d$-dimensional vector space over $\FF_2$ with standard basis $\ee_1, \dots, \ee_d$.
Then the $p$-th piece $\Ical_p(V)$ of the Quillen filtration of $V$ is contained in the subspace generated by $\gamma_U$, ranging over all affine coordinate subspaces $U$ of dimension $p$.
\end{proposition}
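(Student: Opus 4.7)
The plan is to translate the problem into a convenient monomial model of the group algebra. Identify $\Ztwo[V]$ with $R := \Ztwo[x_1,\ldots,x_d]/(x_1^2,\ldots,x_d^2)$ via the change of variables $x_i := 1+T_{\ee_i}$; this is an isomorphism of $\Ztwo$-algebras (for instance, $T_{\ee_i}^2 = 1$ matches $(1+x_i)^2 = 1$) sending $T_{\vv}$ to $\prod_{j\in\supp(\vv)}(1+x_j)$. Under this identification, the augmentation ideal $\Ical_1(V)$ corresponds to the maximal ideal $(x_1,\ldots,x_d)$, so $\Ical_p(V) = \Ical_1(V)^p$ corresponds to $(x_1,\ldots,x_d)^p$. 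Because every $x_i^2$ vanishes, the latter is additively spanned by the squarefree monomials $x_S := \prod_{i\in S} x_i$ with $|S|\geq p$.

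It therefore suffices to write each such $x_S$ as a $\Ztwo$-combination of chains $\gamma_U$ for $p$-dimensional affine coordinate subspaces $U \subset V$. For $U = \vv + \langle \ee_i : i\in T\rangle$ with $|T|=p$, a short computation using $(1+x_j)x_j=x_j$ gives
\[
\gamma_U \;=\; T_\vv \cdot \prod_{i\in T}(1+T_{\ee_i}) \;=\; T_\vv \cdot x_T \;=\; \sum_{R\,\subseteq\,\supp(\vv)\setminus T} x_{T\cup R}.
\]
When $\supp(\vv)\subseteq T$ this collapses to $\gamma_U = x_T$, which handles the base case $|S|=p$ by taking $\vv=0$ and $T=S$.

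For $|S|>p$ I will use induction on $|S|$. Pick any $p$-subset $T\subseteq S$ and take $\vv = \ee_{S\setminus T}$; then the formula above yields $\gamma_{\vv+\langle\ee_i : i\in T\rangle} = x_S + \sum_{T\subseteq S'\subsetneq S} x_{S'}$, where every $x_{S'}$ on the right has $p\leq |S'| <|S|$ and so belongs to the span by the inductive hypothesis, forcing $x_S$ to as well. A closed-form alternative is the $\Ztwo$-identity $x_S = \sum_{T\subseteq S'\subseteq S} \gamma_{\ee_{S'\setminus T}+\langle\ee_i : i\in T\rangle}$, which one verifies by counting that each monomial $x_{T\cup R}$ appears $2^{|S|-|T|-|R|}$ times on the right, hence vanishes modulo $2$ unless $T\cup R = S$. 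I do not anticipate a serious obstacle; the only genuinely load-bearing observation is that allowing translated (i.e.\ affine) coordinate subspaces is essential, since linear coordinate chains alone span only the degree-$p$ squarefree monomials and therefore cannot reach $x_S$ for $|S|>p$.
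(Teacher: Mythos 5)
Your proof is correct, and it takes a genuinely different route from the paper's. The paper first invokes Proposition~\ref{prop:SS_generate_Quillen} to reduce to the generators $\gamma_{V'}$ for $p$-dimensional \emph{linear} subspaces $V'$, and then runs an induction on the ``weight'' of $V'$ (the minimum total Hamming weight of a basis), splitting a minimum-weight basis vector $\vv_p=\vv'+\ee_k$ to produce two subspaces $W'=\langle\vv_1,\dots,\vv_{p-1},\vv'\rangle$ and $W''=\langle\vv_1,\dots,\vv_{p-1},\ee_k\rangle$ of strictly smaller weight, and writing $\gamma_{V'}=\gamma_{\ee_k+W'}+\gamma_{W''}$ from a set-theoretic partition. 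Your proof bypasses that generating proposition entirely: by passing to the squarefree polynomial model $\Ztwo[x_1,\dots,x_d]/(x_i^2)$ with $x_i=1+T_{\ee_i}$, you identify $\Ical_p(V)$ with the span of squarefree monomials $x_S$, $|S|\geq p$, compute $\gamma_U=T_{\vv}\cdot x_T=\sum_{R\subseteq\supp(\vv)\setminus T}x_{T\cup R}$ in closed form, and then run a transparent induction on $|S|$ (or, even more cleanly, give the closed $\Ztwo$-identity $x_S=\sum_{T\subseteq S'\subseteq S}\gamma_{\ee_{S'\setminus T}+\langle\ee_i:i\in T\rangle}$ by a binomial parity count). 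What your approach buys is a self-contained, explicit formula that makes visible exactly which affine translates are needed and why translates are unavoidable for $|S|>p$; what the paper's approach buys is staying in the subspace/affine-subspace language that matches the $\gamma_{V'}$ generators used throughout the rest of that section.
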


\begin{proof}
By Proposition~\ref{prop:SS_generate_Quillen}, it suffices to show that all elements of the form $\gamma_{V'}$, where $V'$ is a $p$-dimensional subspace of $V$, can be written as the linear combination of $\gamma_U$'s with $U$'s being affine coordinate subspaces.
For a subset $A\subset V$, define the weight of $A$ to be the sum of weights of the elements in $A$ with respect to the standard basis.
For a subspace $V'$ of $V$, define the weight of $V'$ to be the minimum weight of $\mathcal{B}$ among all bases $\mathcal{B}$ of $V'$.
We prove the statement by an induction on the weight of the subspace $V'$.
If the weight of $V'$ is $p$, then necessarily $V'$ has a basis consisting of standard basis elements, hence $V'$ is a coordinate subspace.

Now suppose the weight of $V'$ is larger than $p$, pick a basis $\vv_1,\ldots,\vv_p$ of $V'$ of minimum weight.
Without loss of generality, we can assume that $\vv_p$ is not a standard basis element, and write $\vv_p=\vv' + \ee_k$ where $\ee_k$ is not involved when $\vv'$ is written as sum of $\ee_i$'s.
Note that $\vv',\ee_k\not\in\langle\vv_1,\ldots,\vv_{p-1} \rangle$, for otherwise $\vv_1,\ldots,\vv_{p-1},\ee_k$ (respectively $\vv_1,\ldots,\vv_{p-1},\vv'$) is a basis of smaller weight.
Let $W'=\langle\vv_1,\ldots,\vv_{p-1},\vv' \rangle$, and $W''=\langle\vv_1,\ldots,\vv_{p-1},\ee_k \rangle$.
By induction hypothesis, $\gamma_{W'}$ and $\gamma_{W''}$ can each be written as a linear combination of the elements corresponding to affine coordinate subspaces.
By applying a translation by $\ee_k$ to the summands in the first linear combination, $\gamma_{\ee_k+W'}$ can also be written as a linear combination of the elements corresponding affine coordinate subspaces.
Since $\ee_k+W' = (\ee_k+\langle\vv_1,\ldots,\vv_{p-1} \rangle) \sqcup (\vv_p+\langle\vv_1,\ldots,\vv_{p-1} \rangle)$ and $G''=\langle\vv_1,\ldots,\vv_{p-1} \rangle\sqcup (\ee_k+\langle\vv_1,\ldots,\vv_{p-1} \rangle)$, while $V'=\langle\vv_1,\ldots,\vv_{p-1} \rangle\sqcup(\vv_p+\langle\vv_1,\ldots,\vv_{p-1} \rangle)$, we have that $\gamma_{V'}=\gamma_{\ee_k+W'}+\gamma_{W''}$ is a linear combination of affine coordinate subspaces.
\end{proof}

We have established the following corollary. 

\begin{corollary} \label{coro:Quillen_in_prefix}
Let $M$ be an oriented matroid.
Then for every $p$, $\Qcal_p(M)\subset\overline{\Pcal}_p(M)$.
\end{corollary}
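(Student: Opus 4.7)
The plan is to chain Propositions \ref{prop:prefix_to_coor} and \ref{prop:coor_to_all} with no real new input. Starting from the decomposition $\Qcal_p(M) = \sum_\Fcal \Ical_p(\Tcal_\Fcal)$, where $\Fcal$ runs over complete flags of flats, it suffices to prove $\Ical_p(\Tcal_\Fcal) \subset \overline{\Pcal}_p(M)$ for each such $\Fcal$.

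Fix a complete flag $\Fcal$ and any tope $T_\vv \in \Tcal_\Fcal$. By Proposition \ref{prop:T_F_structure}, this choice of origin identifies $\Tcal_\Fcal$ with the $\FF_2$-vector space having basis $\dd_{\Fcal,1}, \ldots, \dd_{\Fcal,d}$, and Proposition \ref{prop:SS_generate_Quillen} then tells us that $\Ical_p(\Tcal_\Fcal)$ is additively generated by the chains $\gamma_{V'}$ for $p$-dimensional linear subspaces $V' \subset \Tcal_\Fcal$. Next I would apply Proposition \ref{prop:coor_to_all} to $\Tcal_\Fcal$ in order to rewrite each $\gamma_{V'}$ as a linear combination of $\gamma_U$, with $U$ ranging over $p$-dimensional affine coordinate subspaces of $\Tcal_\Fcal$. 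Finally, Proposition \ref{prop:prefix_to_coor} furnishes an integral lift $\gamma_{U, \Bcal, \vv} \in \Pcal_p(M)$ of each such $\gamma_U$, so the mod $2$ reduction $\gamma_U$ lies in $\overline{\Pcal}_p(M)$. Assembling these inclusions and summing over the complete flags $\Fcal$ yields the desired containment $\Qcal_p(M) \subset \overline{\Pcal}_p(M)$.

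There is essentially nothing difficult here; the corollary is a bookkeeping step combining the two previous propositions. The only pedantic point worth being careful about is that ``affine coordinate subspace'' is intrinsically defined by the basis $\{\dd_{\Fcal,1}, \ldots, \dd_{\Fcal,d}\}$ (being parallel to a coordinate subspace is independent of the choice of origin in $\Tcal_\Fcal$), so the affine coordinate subspaces produced by Proposition \ref{prop:coor_to_all} after fixing the origin $T_\vv$ are exactly those appearing in the hypothesis of Proposition \ref{prop:prefix_to_coor}, and no change-of-coordinates argument is needed to glue the two propositions together.
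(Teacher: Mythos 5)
Your proof is correct and follows essentially the same route as the paper: reduce via Proposition~\ref{prop:SS_generate_Quillen} to generators indexed by affine subspaces of the $\Tcal_\Fcal$, pass to affine coordinate subspaces via Proposition~\ref{prop:coor_to_all}, and conclude with the mod~$2$ reduction of Proposition~\ref{prop:prefix_to_coor}. Your explicit observation that the class of affine coordinate subspaces depends only on the basis $\{\dd_{\Fcal,1},\ldots,\dd_{\Fcal,d}\}$ and not on the chosen origin, so the two propositions glue without any coordinate-change argument, is a clean way of making precise a point the paper's terse proof leaves implicit.
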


\begin{proof}
It follows from Proposition \ref{prop:SS_generate_Quillen} that $\Qcal_p(M)$ is generated by elements of the form $\gamma_U$ for some $p$-dimensional affine subspaces $U$. 
From Proposition \ref{prop:prefix_to_coor}, we obtain that $\gamma_U$ is contained in $\overline{\Pcal}_p(M)$, where $U$ is any affine coordinate subspace of $\Tcal_{\Fcal}$ for a complete flag of flats $\Fcal$. Therefore, the statement follows.
\end{proof}

\subsection{The Kalinin Filtration Contains the Dual VG Filtration}
\label{sec:Kalinin_prefix}

We prove that every $p$-th prefix chain is contained in the $p$-th part of the Kalinin filtration by exhibiting a sequence of $\beta_i$'s as in Definition~\ref{def:Kalinin_Z2Z}.
We work with $\Ztwo$ throughout this section unless otherwise specified.

\begin{theorem} \label{thm:Kalinin_prefix}
Let $\Fcal:\emptyset=F_0 \subsetneq  F_1 \subsetneq \ldots \subsetneq F_d=E$ be a complete flag of flats and let $T_{\vv}\in\Tcal_\Fcal$.
Then $\gamma_{\Fcal,\vv,p}\in\Kcal_p(M)$.
\end{theorem}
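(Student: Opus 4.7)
The plan is to construct explicit chains $\beta_1, \ldots, \beta_p \in C_\bullet(\Sal_M; \Ztwo)$ satisfying Definition~\ref{def:Kalinin_Z2Z}: $\partial \beta_1 = \gamma_{\Fcal,\vv,p}$ and $\partial \beta_i = \beta_{i-1} + \overline{\beta_{i-1}}$ for $i \geq 2$. For $1 \leq i \leq p$, I propose
\[\beta_i := \sum_{\uu \in \vv + V_i} Z(T_\uu \setminus F_i,\, T_\uu), \qquad V_i := \langle \dd_1, \ldots, \widehat{\dd_i}, \ldots, \dd_p\rangle.\]
Since $T_\uu \setminus F_i$ has complementary flat $F_i$ of rank $i$, each $\beta_i$ is an $i$-chain.

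Verifying $\partial \beta_1 = \gamma_{\Fcal,\vv,p}$ is direct: each 1-cell $Z(T_\uu \setminus F_1, T_\uu)$ has boundary $T_\uu + T_{\uu + \dd_1}$, the two topes agreeing with $T_\uu$ outside the rank-1 flat $F_1$. Since $V_1 \sqcup (V_1 + \dd_1) = \langle \dd_1, \ldots, \dd_p\rangle$, the sum produces each tope of $U_{\Fcal,\vv,p}$ exactly once.

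The main step, for $i \geq 2$, is the identity $\partial \beta_i = \beta_{i-1} + \overline{\beta_{i-1}}$. The boundary of each $i$-cell $Z(T_\uu \setminus F_i, T_\uu)$ is a sum of $(i-1)$-cells indexed by pairs $(F', K)$ of a rank-$(i-1)$ flat $F' \subset F_i$ and a choice of orientation $K$ on $F_i \setminus F'$. I claim that contributions with $F' \neq F_{i-1}$ cancel in the sum over $\uu \in \vv + V_i$: since $F_{i-1}$ and $F'$ are distinct rank-$(i-1)$ flats, their intersection (itself a flat) has strictly smaller rank, so the chain $F_0 \cap F' \subseteq F_1 \cap F' \subseteq \cdots \subseteq F_{i-1} \cap F'$ fails to strictly grow at some index $j^* < i$; this forces $\dd_{j^*}|_{F'} = 0$. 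Since $\dd_{j^*} \in V_i$ (as $j^* \neq i$), translation by $\dd_{j^*}$ partitions $\vv + V_i$ into matched pairs $\{\uu, \uu + \dd_{j^*}\}$ whose respective boundary cells for $F'$ coincide and cancel mod~$2$. For $F' = F_{i-1}$, the restrictions $\dd_1|_{F_{i-1}}, \ldots, \dd_{i-1}|_{F_{i-1}}$ have disjoint supports and are linearly independent, so no cancellation arises; the two choices of $K$ produce cells $Z(T_\uu \setminus F_{i-1}, T_\uu)$ and $Z(T_{\uu + \dd_i} \setminus F_{i-1}, T_{\uu + \dd_i})$, and summing over $\uu \in \vv + V_i$ yields $\sum_{\uu' \in U_{\Fcal,\vv,p}} Z(T_{\uu'} \setminus F_{i-1}, T_{\uu'})$. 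This equals $\beta_{i-1} + \overline{\beta_{i-1}}$ because conjugation sends $Z(T_\uu \setminus F_{i-1}, T_\uu)$ to $Z(T_{\uu + \ff_{i-1}} \setminus F_{i-1}, T_{\uu + \ff_{i-1}})$, and $\ff_{i-1} \equiv \dd_{i-1} \pmod{V_{i-1}}$ implies that $\vv + V_{i-1}$ and $\vv + V_{i-1} + \ff_{i-1}$ form a disjoint partition of $\vv + \langle \dd_1, \ldots, \dd_p\rangle$.

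The main obstacle is the cancellation argument for non-flag flats $F'$; the crux is the ``missing step'' in the chain of intersections yielding a $\dd_{j^*} \in V_i$ with $\dd_{j^*}|_{F'} = 0$. This rank-theoretic input is what makes the proposed choice of $V_i$ work, and once established the bookkeeping reduces to standard coset arithmetic in $(\Ztwo)^E$.
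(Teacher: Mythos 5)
Your proof is correct, and after unwinding the paper's recursion, your closed-form $\beta_i = \sum_{\uu\in\vv+V_i} Z(T_\uu\setminus F_i,T_\uu)$ coincides with the chains the paper builds inductively (your $\beta_p$ is exactly the paper's equation for $\beta_p$, and the lower ones match the paper's $\beta_i'+\beta_i''$ recursion), and your ``missing step in the chain of intersections'' cancellation is the same rank-counting argument the paper uses in its Case~I. The only difference is organizational: you write all $\beta_i$ in closed form and verify every relation $\partial\beta_i=\beta_{i-1}+\overline{\beta_{i-1}}$ uniformly, whereas the paper decomposes $\gamma_{\Fcal,\vv,p+1}=\gamma_{\Fcal,\vv,p}+\gamma_{\Fcal,\vv+\dd_{p+1},p}$, inducts on $p$, and only checks the top relation $\partial\beta_{p+1}=\beta_p+\overline{\beta_p}$ explicitly.
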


\begin{proof}
For brevity of the notation, we assume (by reorientation) $\vv={\bf 0}$ whenever only one specific prefix chain is being considered.

For the case $p=1$, we have $\gamma_{\Fcal, {\bf 0},1}=T_{{\bf 0}}+T_{\dd_1}$, while $Z(T_{\bf 0}\setminus F_1,T_{\bf 0})$ is a $1$-dimensional cell whose boundary is precisely the two vertices $Z(T_{\bf 0},T_{\bf 0})$ and 
$Z(T_{\dd_1},T_{\bf 0})$ .
So we can take $\beta_1$ to be $Z(T_{\bf 0}\setminus F_1,T_{\bf 0})$.

Now suppose the statement is true for $p$. More precisely, for every $\gamma_{\Fcal,\vv,p}$ we suppose for $i \leq p$ that 
there exists $ \beta_i \in C_i(\Sal_M; \Ztwo)$ such that 
$\partial\beta_1= i_{\ast} \gamma$ and $\partial\beta_i=\beta_{i-1}+\overline{\beta_{i-1}}$ for $ 2 \leq i \leq p$. 
We further assume that we can choose 
\begin{equation}\label{eq:beta_p}
    \beta_{p}  = \sum_{{\bf u}\in U_{\Fcal,\vv,p-1}} Z(T_{\vv}\setminus F_{p},T_{\bf u}).
\end{equation}

Notice that this holds in the case $p = 1$ above.

We now pass to the case of $p+1$, and prove that $\gamma_{\Fcal,{\bf 0},p+1}$
is in $\Kcal_{p+1}(M)$ by finding $i$-chains $\beta_i$ for $i = 1, \dots, p+1$. Moreover, we have 
$$\beta_{p+1} = \sum_{{\bf u}\in U_{\Fcal, {\bf 0}, p}} Z(T_{\bf 0}\setminus F_{p+1},T_{\bf u}).$$
 
We start with noting $$\gamma_{\Fcal, {\bf 0},p+1}=\gamma_{\Fcal, {\bf 0},p}+\gamma_{\Fcal, \dd_{p+1},p}.$$ 
By induction, for $i = 1, \ldots, p$, 
there are $i$-chains $\beta_i'$ and $\beta_i''$ certifying that $\gamma_{\Fcal, {\bf 0},p}$ and $\gamma_{\Fcal, \dd_{p+1},p} $ are in $\Kcal_p$.
For $i \leq p$, we can take $\beta_i = \beta_i' + \beta_i''$. Using the induction assumption, it is easily verified that $\partial \beta_i = \beta_{i-1} +\overline{\beta_{i-1}}$ for $i = 1, \ldots, p$. 
It remains to show that $\partial \beta_{p+1} = \beta_p + \overline{\beta_{p}}$, where $\beta_p = \beta_p' + \beta_p''$.

We spend some time simplifying the conjugation of $\beta'_p$. 
Observe that
$$\overline{Z(T_{\bf 0}\setminus F_p,T_{\bf u})}
=Z(T_{\bf 0}\setminus F_p,-T_{\bf u})
=Z(T_{\bf 0}\setminus F_p,T_{\ff_d + \bf u})
=Z(T_{\bf 0}\setminus F_p,T_{\ff_p+ {\bf u}}),$$ 
since $(T_{\bf 0}\setminus F_p) \circ T_{\ff_d + \bf u} = (T_{\bf 0}\setminus F_p) \circ T_{\ff_p+ {\bf u}}$.
Hence we have
 $$\overline{\beta'_p} = \sum_{{\bf u}\in U_{\Fcal,\ff_p,p-1}} Z(T_{\bf 0}\setminus F_p,T_{\bf u}).$$

By the same argument, we also have 
$\gamma_{\Fcal, \dd_{p+1},p}\in\Kcal_p(M)$.
More precisely, $\gamma_{\Fcal, \dd_{p+1},p}$ can be obtained from $\gamma_{\Fcal, {\bf 0},p}$ by replacing every $T_{\bf v}$ by $T_{{\bf v}+\dd_{p+1}}$ in the sum.
By induction, when constructing $\beta''_{p'}, p'<p$ for $\gamma_{\Fcal, \dd_{p+1},p}$, one can simply apply the analogous replacement of $Z(T_{\bf 0}\setminus F_{p'},T_\vv)$ by $Z(T_{\dd_{p+1}}\setminus F_{p'},T_{{\bf v}+\dd_{p+1}})$.

In particular, we can choose the corresponding $p$-chain to be 
$$\beta_p'' = \sum_{{\bf u}\in U_{\Fcal,\dd_{p+1}, p-1}} Z(T_{\dd_{p+1}} \setminus F_p,T_{\bf u}).$$

Nevertheless, we note that $Z(T_{\dd_{p+1}} \setminus F_p,T_{\uu+\dd_{p+1}})=Z(T_{\dd_{p+1}} \setminus F_p,T_{\bf u})$ for ${\bf u}\in U_{\Fcal,{\bf 0},p-1}$, as $$(T_{\dd_{p+1}}\setminus F_p)\circ T_{{\bf u}+\dd_{p+1}}=(T_{\dd_{p+1}}\setminus F_p)\circ T_{\bf u},$$ so the sum can actually be rewritten as
\begin{equation} \label{eq:2nd_chain}
  \beta_p'' =  \sum_{{\bf u}\in U_{\Fcal,{\bf 0}, p-1}} Z(T_{\dd_{p+1}} \setminus F_p,T_{\bf u}).
\end{equation}
Analogous to the above, 
we have $$\overline{\beta''_p} = \sum_{{\bf u}\in U_{\Fcal,\ff_p, p-1}} Z(T_{\dd_{p+1}} \setminus F_p,T_{\bf u}).$$

Hence, it remains to show that the boundary of
$\beta_{p+1}$ is equal to 
\begin{equation} \label{eq:target_chain}
    \sum_{{\bf u}\in U_{\Fcal,{\bf 0}, p}}\big[Z(T_{\bf 0}\setminus F_p,T_{\bf u})+Z(T_{\dd_{p+1}} \setminus F_p,T_{\bf u})\big],
\end{equation}
here we use the simple fact that $U_{\Fcal,{\bf 0}, p}=U_{\Fcal,{\bf 0}, p-1}\sqcup U_{\Fcal,\ff_p, p-1}$.

Every $p$-cell on the boundary of $\beta_{p+1}$ is of the form $Z(T\setminus F,T_{\bf u})$ for some flat $F\subset F_{p+1}$ of rank $p$, some tope $T$ that agrees with $T_{\bf 0}$ outside of $F_{p+1}$, and some ${\bf u}\in U_{\Fcal, {\bf 0}, p}$.

Case I: $F\neq F_p$.\\
There exists some $1\leq k\leq p$ such that $F\cap(F_k\setminus F_{k-1})=\emptyset$, for otherwise we would have $F_p = \langle F\cap(F_i\setminus F_{i-1}): i=1,\ldots,p\rangle\subsetneq F$, which is impossible by considering rank.
For any ${\bf u}\in U_{\Fcal,{\bf v}_0, p}$, we have $Z(T\setminus F,T_{\bf u})=Z(T\setminus F,T_{{\bf u}+\dd_k})$ because $(T\setminus F)\circ T_{\bf u}=(T\setminus F)\circ T_{{\bf u}+\dd_k}$.
By pairing up the elements of $U_{\Fcal,{\bf 0}, p}$ into $\{{\bf u}, {\bf u}+\dd_k\}$'s, we can see that the boundary does not contain cells of the form $Z(T\setminus F,T_{\bf u})$ for $F\neq F_p$. 

Case II: $F=F_p$.\\
In such a case, a tope $T$ restricted to $F_{p+1}\setminus F_p$ must be equal to that of $T_{\bf 0}$ or $T_{\dd_{p+1}}$, for otherwise, $T|_{F_{p+1}}/F_p$ would be a third tope of the rank 1 matroid $M|_{F_{p+1}}/F_p$.
For every ${\bf u}\in U_{\Fcal,{\bf 0}, p}$, the cell $Z(T_{\bf 0}\setminus F_{p+1},T_{\bf u})$ is the unique cell in $\beta_{p+1}$ whose boundary contains $Z(T_{\bf 0}\setminus F_p,T_{\bf u})$; it is also the unique cell in $\beta_{p+1}$ which contains $Z(T_{\dd_{p+1}}\setminus F_p,T_{\bf u})$ in its boundary: for any other ${\bf u}'\in U_{\Fcal,{\bf 0}, p}$, we have $(T_{\bf 0}\setminus F_p)\circ T_{\bf u}\neq (T_{\bf 0}\setminus F_p)\circ T_{{\bf u}'}$, so $Z(T_{\bf 0}\setminus F_p,T_{\bf u})\neq Z(T_{\bf 0}\setminus F_p,T_{{\bf u}'})$, and an analogous argument for $Z(T_{\dd_{p+1}}\setminus F_p,T_{\bf u})$.
Hence the two cells $Z(T_{\bf 0}\setminus F_p,T_{\bf u}),Z(T_{\dd_{p+1}}\setminus F_p,T_{\bf u})$ appear in the boundary exactly once each, as claimed in (\ref{eq:target_chain}).
\end{proof}

It turns out that the special chains $\beta_p$'s constructed in the above proof have been studied by Denham with $\ZZ$-coefficients. In the following definition, we fix an orientation of the order complex $\Delta(\Lcal(M|_{F_p}))$ and make use of the canonical homeomorphism between it and each $Z(T_{\vv}\setminus F_p,T_{\bf u})$ to give the latter an orientation.

\begin{definition}\label{def:pbricks} \cite[Definition~2.4]{Denham_OS}
Given the datum $(\Fcal,\vv,p)$ to define a prefix chain as well as an arbitrary orientation of $\Delta(\Lcal(M|_{F_p}))$, the corresponding {\em $p$-brick} is 
$$\alpha_{\Fcal,\vv,p} = \sum_{{\bf u}\in U_{\Fcal,\vv, p}} (-1)^{w({\bf u})} Z(T_{\vv}\setminus F_p,T_{\bf u})\in C_p(\Sal_M;\ZZ).$$
\end{definition}

\begin{example} \label{ex:Kalinin} \rm
Following with the on-going Example \ref{ex:SalvettiU23}, the $1$-brick and $2$-brick associated to $\gamma_{\Fcal,T_1,1}, \gamma_{\Fcal,T_1,2}$ are $\alpha_{\Fcal,T_1,1}=Z(\zeta_1,T_1)-Z(\zeta_1,T_2)$ and $\alpha_{\Fcal,T_1,2}=Z(O,T_1)-Z(O,T_2)+Z(O,T_4)-Z(O,T_5)$, respectively.
\end{example}

\begin{remark} \label{rm:Kalinin}
\normalfont
(1) When the oriented matroid is realisable by a real hyperplane arrangement, the Salvetti complex can be realised as a strong deformation retract of the complement of its complexification, whose conjugation as a cellular complex coincides with the restriction of the conjugation of the ambient complex Euclidean space.
Hence the construction of $p$-bricks remains valid if we replace the role of $\Sal_M$ by the complement of its complexification.

(2) With the convention of orientations of the cells in $\Sal_M$, it is possible to verify that integral $p$-bricks can be recursively constructed as in the above proof (which we omitted for sake of brevity). However, working with $\ZZ$-coefficients, the difference of two choices of $\beta_i$ does not cancel out upon adding to its conjugation, leaving ambiguity. So a direct extension extension of Kalinin filtration with $\ZZ$-coefficients using $p$-bricks is not well-defined after all.
\end{remark}

\begin{corollary} \label{coro:Prefix_in_Kalinin}
Let $M$ be an oriented matroid.
Then for every $p$, $\overline{\Pcal}_p(M)\subset\Kcal_p(M)$.
\end{corollary}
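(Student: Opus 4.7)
The plan is to reduce the corollary directly to the key Theorem~\ref{thm:Kalinin_prefix} via the generating-set description of the Varchenko--Gelfand filtration. First I would invoke Proposition~\ref{prop:prefixdualdegree}, which states that the integer filtration piece $\Pcal_p(M)$ is generated as a subgroup of $\ZZ[\Tcal(M)]$ by the prefix chains $\gamma_{\Fcal,\vv,p}$, as $\Fcal$ ranges over complete flags of flats and $\vv$ over choices of origin in $\Tcal_\Fcal$. Reducing modulo $2$ is an exact functor on the relevant free modules, so $\overline{\Pcal}_p(M)$ is spanned over $\Ztwo$ by the mod~$2$ reductions of these same prefix chains.

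Next I would simply note that Theorem~\ref{thm:Kalinin_prefix} has already placed each (mod~$2$ reduction of a) prefix chain $\gamma_{\Fcal,\vv,p}$ inside $\Kcal_p(M)$ by explicitly constructing the required sequence $\beta_1,\ldots,\beta_p$ of cellular chains in $\Sal_M$ satisfying the Kalinin conditions $\partial\beta_1 = i_*\gamma_{\Fcal,\vv,p}$ and $\partial\beta_i = \beta_{i-1}+\overline{\beta_{i-1}}$. Since $\Kcal_p(M)$ is a $\Ztwo$-subspace of $(\Ztwo)[\Tcal(M)]$, containing a spanning set of $\overline{\Pcal}_p(M)$ forces the containment $\overline{\Pcal}_p(M)\subset\Kcal_p(M)$.

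There is essentially no obstacle left at this stage: the genuine work lies in Theorem~\ref{thm:Kalinin_prefix}, where the recursive construction of the $\beta_i$'s is carried out, together with the identification of the boundary cells using the two cases ($F\neq F_p$ cancels via pairing by $\dd_k$, and $F=F_p$ produces the target chain via uniqueness). Once that theorem is in hand, the corollary is a formal consequence, and the only thing to verify in writing it up is the compatibility between the prefix-chain generating set (an integral statement) and its $\Ztwo$-reduction.
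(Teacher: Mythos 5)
Your proposal is correct and follows exactly the route the paper takes (the paper states this corollary without a written-out proof, treating it as an immediate consequence of Theorem~\ref{thm:Kalinin_prefix} combined with the prefix-chain spanning description of $\Pcal_p(M)$ from Proposition~\ref{prop:prefixdualdegree}). Your explicit note that mod~$2$ reduction carries the integral spanning set to a $\Ztwo$-spanning set of $\overline{\Pcal}_p(M)$ is precisely the small compatibility check needed, and the conclusion follows since $\Kcal_p(M)$ is a $\Ztwo$-subspace.
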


\begin{proof}[Proof of Theorem \ref{mainthm:Z2}]
By Corollary~\ref{coro:Quillen_in_prefix} and \ref{coro:Prefix_in_Kalinin}, we have $\Qcal_p(M)\subset\overline{\Pcal}_p(M)\subset\Kcal_p(M)$.
By comparing the dimensions of the $\Qcal_p(M)$ and $\Kcal_p(M)$ using Proposition~\ref{prop:Kalinin}, Theorem~\ref{thm:sign_cosheaf_dim}, and Theorem~\ref{thm:H_eq_OS}, we conclude that all inclusions are actually equalities.
\end{proof}

\section{Proof of Theorem \ref{mainthm:maps}}

We begin by recalling the following result of Denham which describes an isomorphism between the associated gradeds of the dual Varchenko--Gelfand filtration and the homology of the Salvetti complex in terms of prefix chains, upon choosing an arbitrary orientation for every $\Delta(\Lcal(M|_p))$.
The isomorphism comes from analysing a combinatorially defined filtration of the Salvetti complex that leads to a filtration of its chain complexes with $\ZZ$-coefficients and a spectral sequence. 

\begin{theorem} \label{thm:B1} \cite[Theorem~3.5]{Denham_OS}
There is a well-defined map $$\abv_p:\Pcal_p(M)\rightarrow H_p(\Sal_M;\ZZ)$$ given by linearly extending $\gamma_{\Fcal,\vv,p}\mapsto [\alpha_{\Fcal,\vv,p}]$, which descends into a well-defined isomorphism $$\abv_p:\Pcal_p(M)/\Pcal_{p+1}(M)\rightarrow H_p(\Sal_M;\ZZ).$$
\end{theorem}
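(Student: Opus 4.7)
The plan is to verify the statement in four steps: (a) each brick $\alpha_{\Fcal,\vv,p}$ is a cycle; (b) the assignment $\gamma_{\Fcal,\vv,p}\mapsto [\alpha_{\Fcal,\vv,p}]$ extends to a well-defined linear map out of $\Pcal_p(M)$; (c) this map vanishes on $\Pcal_{p+1}(M)$; and (d) the induced map on the associated graded is an isomorphism.

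Step (a) is a routine but careful boundary computation: $\partial Z(T_\vv\setminus F_p,T_\uu)$ decomposes along rank-$(p-1)$ subflats of $F_p$, and for each resulting boundary cell the contributing elements of $U_{\Fcal,\vv,p}$ pair up via shifts in directions absent from that subflat, whose signs $(-1)^{w(\uu)}$ cancel in pairs. For step (c), I would adapt the Case~I/Case~II analysis in the proof of Theorem~\ref{thm:Kalinin_prefix} to $\ZZ$-coefficients. The boundary $\partial\alpha_{\Fcal,\vv,p+1}$ is supported on $p$-cells $Z(L,T_\uu)$; cancellation between antipodal summands eliminates every contribution except those with $L=T_\vv\setminus F_p$ or its shift by $\dd_{p+1}$, and careful sign tracking shows the residue is precisely a $\ZZ$-linear combination of $p$-bricks realising $\abv_p$ applied to the expansion of $\gamma_{\Fcal,\vv,p+1}$ as a sum of $p$-prefix chains. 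This exhibits $\abv_p(\gamma_{\Fcal,\vv,p+1})$ as a boundary.

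The main obstacle is step (b), which I would handle together with (d) via a spectral sequence argument. Set up a filtration of $C_\bullet(\Sal_M;\ZZ)$ by grouping cells according to the flat determined by the complement of their bottom covector's support, refined by flag depth. The induced $E^0$ page decomposes by flat of $\underline{M}$, and a Brieskorn-style argument identifies the degree-$p$ part of $E^1$ with a direct sum of local contributions indexed by rank-$p$ flats, of total rank $b_p(\underline{M})$. Since $\rank H_p(\Sal_M;\ZZ)=b_p(\underline{M})$ by \cite{BZ92}, the spectral sequence must degenerate at $E^1$. The bricks $\alpha_{\Fcal,\vv,p}$ arise as canonical cycle representatives of these $E^1$ classes, and intersecting the induced filtration on the tope subspace—viewed inside the chain complex as the conjugation fixed locus—should recover the dual Varchenko--Gelfand filtration. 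This last identification is the technical heart of the argument, and it simultaneously yields well-definedness in (b) and the isomorphism in (d).
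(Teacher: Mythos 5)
The paper does not supply its own proof of this statement: it is quoted from Denham (\cite[Theorem~3.5]{Denham_OS}), and the surrounding paragraph only records that Denham's argument proceeds via ``a combinatorially defined filtration of the Salvetti complex that leads to a filtration of its chain complexes with $\ZZ$-coefficients and a spectral sequence.'' Your steps (b) and (d) — filtering $C_\bullet(\Sal_M;\ZZ)$ by supporting flat, a Brieskorn-type local identification of $E^1$, and degeneration forced by the rank count against $b_p(\underline M)$ — are thus consistent in outline with what the paper attributes to Denham, though they remain at a high-level sketch.

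Step (c), however, cannot work as written, for a structural reason. By your own step (a), applied with $p$ replaced by $p+1$, the chain $\alpha_{\Fcal,\vv,p+1}$ is a \emph{cycle}: it is precisely the representative that $\abv_{p+1}$ assigns to $\gamma_{\Fcal,\vv,p+1}$. Hence $\partial\alpha_{\Fcal,\vv,p+1}=0$ identically, and the nonzero ``residue'' you describe cannot appear; the Case~I/II cancellation applied to $\alpha_{\Fcal,\vv,p+1}$ is exactly the verification of (a) at level $p+1$ and says nothing about whether $\abv_p$ kills $\Pcal_{p+1}$. The chain you actually need to bound $\abv_p(\gamma_{\Fcal,\vv,p+1})=[\alpha_{\Fcal,\vv,p}]-[\alpha_{\Fcal,\vv+\dd_{p+1},p}]$ is a signed $\ZZ$-lift of the $\beta_{p+1}$ from the proof of Theorem~\ref{thm:Kalinin_prefix} (cf.\ equation~(\ref{eq:beta_p})):
\[ \tilde\beta \;=\; \sum_{\uu\in U_{\Fcal,\vv,p}} (-1)^{w(\uu)}\, Z(T_\vv\setminus F_{p+1},T_\uu), \]
where crucially the sum runs over the $p$-dimensional affine subspace $U_{\Fcal,\vv,p}$ (hence $2^p$ summands), \emph{not} over $U_{\Fcal,\vv,p+1}$, which would return the cycle $\alpha_{\Fcal,\vv,p+1}$. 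Repeating the Case~I/II analysis for $\partial\tilde\beta$ with signs then gives $\pm\bigl(\alpha_{\Fcal,\vv,p}-\alpha_{\Fcal,\vv+\dd_{p+1},p}\bigr)$, matching the decomposition $\gamma_{\Fcal,\vv,p+1}=\gamma_{\Fcal,\vv,p}-\gamma_{\Fcal,\vv+\dd_{p+1},p}$. This is precisely the direction hinted at in Remark~\ref{rm:Kalinin}(2).

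Finally, the identification that closes (b)$+$(d) is the real content and is not a small step: ``intersecting'' the chain-level filtration with the tope subspace cannot be taken literally, since the fixed locus sits entirely in $C_0$ while the filtration lives across all degrees. Making that connection precise — via the recursive brick construction, or dually by pairing against monomials in Heaviside functions on cochains — is essentially the whole of Denham's theorem rather than a concluding remark.
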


\begin{proposition} \label{prop:a_eq_bv}
    As maps from $\Kcal_p(M)=\overline{\Pcal}_p(M)$  to $H_p(\Sal_M;\Ztwo)$ $bv_p$ and $\overline{\abv}_p$ are equal.
\end{proposition}

\begin{proof}
It suffices to compare the two maps over the generators $\gamma_{\Fcal,\vv,p}$'s. By definition,  $bv_p(\gamma_{\Fcal,\vv,p})=[\beta_p+\overline{\beta_p}]$ where $\beta_p$ is described in (\ref{eq:beta_p}) as $\sum_{{\bf u}\in U_{\Fcal,\vv,p-1}} Z(T_{\vv}\setminus F_{p},T_{\bf u})$, and $\overline{Z(T_{\vv}\setminus F_{p},T_{\bf u})}=Z(T_{\vv}\setminus F_{p},-T_{\bf u})=Z(T_{\vv}\setminus F_{p},T_{\ff_p+\bf u})$. As ${\bf u}$'s range over $U_{\Fcal,\vv,p-1}$, $\ff_p+{\bf u}$'s range over $U_{\Fcal,\ff_p+\vv,p-1}$, and $U_{\Fcal,\vv, p}=U_{\Fcal,\vv, p-1}\sqcup U_{\Fcal,\ff_p+\vv, p-1}$, hence $\beta_p+\overline{\beta_p}$ is precisely $\alpha_{\Fcal,\vv, p}$ in Definition~\ref{def:pbricks} over $\Ztwo$.
\end{proof}

To prove the rest of Theorem~\ref{mainthm:maps}, we utilise the explicit isomorphism between the cellular cohomology of $\Sal_M$ and Orlik--Solomon algebra described by Bj\"orner and Ziegler \cite{BZ92}. We then show that under the dual of this isomorphism, the above map provided by Denham, which is equal to the Viro homomorphisms from Kalinin filtration as we have shown in the last section, is equivalent to the maps $\qbv_p$ defined first in \cite{RS}.

We work with $\Ztwo$-coefficients for Theorem~\ref{mainthm:maps} as some notions such as Quillen filtration and Kalinin are only defined over $\Ztwo$. But we work with $\ZZ$-coefficients in this section all the way up till the very proof of Theorem~\ref{mainthm:maps} since the respective results over $\ZZ$ might be of independent interest. 

To prove the theorem, we use explicit bases of $\OS^p(\underline{M})$. 
Fix an arbitrary ordering of $E$.
A {\em broken circuit} is obtained by removing the minimum element from a circuit of $\underline{M}$.
A {\em NBC-set} is a subset of $E$ that does not contain any broken circuit; it is necessarily independent. For each NBC-set $S=(i_1>\ldots>i_p)$, we can associate an element $e^*_{i_1}\wedge\ldots\wedge e^*_{i_p}$ in the Orlik--Solomon algebra of $\underline{M}$.

\begin{theorem} \cite[Theorem~7.10.2]{Bjo92}
For any ordering of $E$, the elements of $\OS(\underline{M};\ZZ)$ corresponding to NBC-sets of size $p$ form a $\ZZ$-basis of $\OS^p(\underline{M}; \ZZ)$, hence they also descend into a $\Ztwo$-basis of $\OS^p(\underline{M}; \Ztwo)$.
\end{theorem}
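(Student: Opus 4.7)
The plan is to prove this classical result via deletion--contraction induction on $|E|$, showing simultaneously that $\OS^p(\underline{M};\ZZ)$ is torsion free and that the NBC monomials form a $\ZZ$-basis; the $\Ztwo$-statement then follows by tensoring with $\Ztwo$.

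First, I would establish that NBC monomials span $\OS^p(\underline{M}; \ZZ)$. Given any wedge monomial $e^*_{i_1}\wedge\ldots\wedge e^*_{i_p}$ whose index set $S$ contains a broken circuit $C\setminus\{m\}$, where $m=\min C$, the circuit relation associated to $C=\{j_1<\ldots<j_t\}$ allows one to rewrite this monomial as a signed sum of wedge monomials whose supports include $m$ and therefore lexicographically precede $S$ in the reverse lex order on supports. Iterating this reduction on a well-founded order terminates and produces an expansion into NBC monomials. This argument works already at the level of $\ZZ$-coefficients since the signs coming from the circuit relations are all $\pm1$.

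Second, I would prove linear independence by induction on $|E|$. Let $e\in E$ be the maximum element with respect to the chosen ordering, and construct the short exact sequence
\begin{equation*}
0 \longrightarrow \OS^{p-1}(\underline{M}/e;\ZZ) \xrightarrow{\ \iota_e\ } \OS^p(\underline{M};\ZZ) \xrightarrow{\ \pi_e\ } \OS^p(\underline{M}\setminus e;\ZZ) \longrightarrow 0,
\end{equation*}
where $\pi_e$ sends $e^*\mapsto 0$ on generators and $\iota_e$ wedges (with a sign) with $e^*$. The well-definedness of $\iota_e$ on the quotient requires checking that every circuit relation of $\underline{M}/e$ becomes a circuit relation of $\underline{M}$ after wedging with $e^*$; this follows from the fact that circuits of $\underline{M}/e$ are precisely the minimal members of $\{C,\, C\setminus\{e\}:C\text{ circuit of }\underline{M}\}$ of the appropriate types, and that $e$ is the maximum element so that wedging with $e^*$ interacts cleanly with the sign conventions in $\partial e_C^*$.

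Once the short exact sequence is established, the NBC sets of $\underline{M}$ of size $p$ partition into those containing $e$ (bijectively corresponding to NBC sets of $\underline{M}/e$ of size $p-1$, via deletion of $e$) and those not containing $e$ (bijectively corresponding to NBC sets of $\underline{M}\setminus e$ of size $p$). By the inductive hypothesis, the outer terms are free $\ZZ$-modules with NBC bases, so $\OS^p(\underline{M};\ZZ)$ sits in the middle of a short exact sequence of free $\ZZ$-modules; it is therefore itself free, and $\iota_e$ of a basis of the kernel together with any lifts of a basis of the cokernel yields a $\ZZ$-basis, which by construction is exactly the set of NBC monomials of $\underline{M}$ of size $p$. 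Base cases ($E=\emptyset$ or $p=0$) are immediate.

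The main obstacle, and the place where care is needed, is verifying the exactness of the deletion--contraction sequence over $\ZZ$ with the correct sign conventions. Exactness at the right end is clear from the definition of $\pi_e$; the subtlety lies in showing $\ker\pi_e=\Image \iota_e$, which reduces to a careful comparison between the circuit ideal of $\underline{M}$ restricted to monomials containing $e^*$ and the circuit ideal of $\underline{M}/e$. I would handle this by writing every element of $\ker\pi_e$ in the canonical form $e^*\wedge\omega$ modulo the circuit ideal, and checking that the residual indeterminacy in $\omega$ is exactly the circuit ideal of $\underline{M}/e$.
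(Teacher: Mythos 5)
This statement is cited in the paper from Bj\"orner's survey [Bjo92, Theorem~7.10.2] without proof, so there is no in-paper argument to compare against; I will assess your proposal on its own. The spanning step and the overall plan (deletion--contraction on the largest element $e$, the NBC-set partition, induction on $|E|$, freeness of the outer terms) are exactly the standard route. However, the short exact sequence you set up is wrong, and as written neither $\iota_e$ nor $\pi_e$ is well-defined on Orlik--Solomon algebras. The map ``$e^*\mapsto 0$'' does not annihilate the relation ideal of $\underline{M}$: for a circuit $C$ with $e\in C$, the relation $\partial e^*_C$ is sent to $\pm e^*_{C\setminus e}$, and $C\setminus e$ is independent in $\underline{M}\setminus e$, so $\pm e^*_{C\setminus e}$ is nonzero in $\OS^{|C|-1}(\underline{M}\setminus e;\ZZ)$. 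Dually, wedging with $e^*$ does not carry the ideal of $\underline{M}/e$ into that of $\underline{M}$: if $C'=\{j_1<\cdots<j_t\}$ is a circuit of $\underline{M}/e$ with $C'\cup\{e\}$ a circuit of $\underline{M}$ (so $C'$ is independent in $\underline{M}$), then expanding
$$\partial e^*_{C'\cup e}=(\partial e^*_{C'})\wedge e^* + (-1)^{t+1}e^*_{C'}$$
shows $e^*\wedge\partial e^*_{C'}\equiv -\,e^*_{C'}\pmod{I(\underline{M})}$, which is nonzero since $C'$ is independent. A concrete failure: in $U_{2,3}$ with $e=3$, the relation $e^*_1-e^*_2\in I(U_{2,3}/3)$ maps under $e^*_3\wedge(-)$ to $e^*_{23}-e^*_{13}$, which is not an integer multiple of $\partial e^*_{123}=-e^*_{23}+e^*_{13}-e^*_{12}$.

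The correct deletion--contraction sequence runs the other way:
$$0 \longrightarrow \OS^p(\underline{M}\setminus e;\ZZ) \longrightarrow \OS^p(\underline{M};\ZZ) \longrightarrow \OS^{p-1}(\underline{M}/e;\ZZ) \longrightarrow 0,$$
where the first map is induced by the inclusion of generators $E\setminus\{e\}\hookrightarrow E$, and the second is the interior product (contraction) against $e^*$, i.e., the signed coefficient of $e^*$. You should verify that it is this second map that kills $I(\underline{M})$ and lands in $I(\underline{M}/e)$, which is where the computation with circuits through $e$ is actually needed. Once the sequence is corrected, your inductive argument goes through unchanged: NBC sets of $\underline{M}$ of size $p$ avoiding $e$ give a basis of the left term, NBC sets of $\underline{M}$ containing $e$ (drop $e$) give a basis of the right term, the lift of $e^*_{S}$ is $\pm e^*_{S\cup e}$, and exactness of a sequence of free $\ZZ$-modules on both ends forces the middle to be free with the NBC monomials as basis.
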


Denote by $\BC^p(\underline{M})$ the collection of NBC-sets of size $p$, so $|\BC^p(\underline{M})|=b_p(\underline{M})$.
Let $S:=\{i_1,\ldots,i_p\}$ be a NBC-set, with $i_1>\ldots>i_p$ with respect to the ambient ordering.
Consider the partial flag $\Fcal_S:\emptyset=F_0 \subsetneq F_1=\langle i_1\rangle \subsetneq  F_2=\langle i_1,i_2\rangle \subsetneq \ldots \subsetneq  F_p=\langle i_1,\ldots,i_p\rangle$.
A partial flag (and extension thereof) obtained this way is a {\em NBC-flag}; the corresponding prefix chains are {\em NBC-chains}.
We state an elementary fact about NBC-sets and NBC-chains.

\begin{lemma} \label{lem:unique_NBC}
Let $S,\Fcal_S$ be as above, and let $S'\neq S$ be another NBC-set of size $p$.
Then $S'\cap(F_k\setminus F_{k-1})=\emptyset$ for some $k\leq p$.
\end{lemma}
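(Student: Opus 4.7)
The plan is to argue by contradiction. Suppose $S' \cap (F_k \setminus F_{k-1}) \neq \emptyset$ for every $1 \le k \le p$. Since the layers $F_k \setminus F_{k-1}$ are pairwise disjoint and $|S'|=p$, the set $S'$ must contain exactly one element, call it $j_k$, in each layer $F_k \setminus F_{k-1}$; in particular $S' = \{j_1,\ldots,j_p\} \subset F_p$. By the same reasoning applied to $S$ (together with independence of $S$ coming from the NBC property), we also have $i_k \in F_k \setminus F_{k-1}$. A short induction on $k$ using $j_k \notin F_{k-1}$ shows that $\{j_1,\ldots,j_k\}$ is independent with closure $F_k$, so $S'$ is another basis of $F_p$.

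Since $S' \neq S$, I will choose the smallest index $k$ with $j_k \neq i_k$. By minimality, $\{i_1,\ldots,i_{k-1}\} = \{j_1,\ldots,j_{k-1}\}$, while $i_k$ and $j_k$ are distinct elements of $F_k \setminus F_{k-1}$. The set $\{i_1,\ldots,i_{k-1},i_k,j_k\}$ has size $k+1$ but lies in the rank $k$ flat $F_k$, hence contains a circuit $C$. Because $\{i_1,\ldots,i_k\}$ and $\{i_1,\ldots,i_{k-1},j_k\}$ are both independent bases of $F_k$, the circuit $C$ must contain both $i_k$ and $j_k$; thus $C = \{i_k,j_k\} \cup C'$ for some $C' \subseteq \{i_1,\ldots,i_{k-1}\}$.

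The crux is to identify the minimum of $C$ with respect to the fixed ambient ordering of $E$. Since $S$ is ordered so that $i_1 > i_2 > \ldots > i_p$, every element of $C' \subseteq \{i_1,\ldots,i_{k-1}\}$ is strictly greater than $i_k$, so $\min C = \min\{i_k, j_k\}$. This forces a two-case split: if $i_k < j_k$, then the broken circuit $C \setminus \{i_k\} = \{j_k\} \cup C'$ sits inside $S' = \{j_1,\ldots,j_p\}$ (as $C' \subset \{j_1,\ldots,j_{k-1}\}$), contradicting that $S'$ is NBC; if $j_k < i_k$, then the broken circuit $C \setminus \{j_k\} = \{i_k\} \cup C'$ sits inside $S$, contradicting that $S$ is NBC.

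The only genuinely delicate point is the last case split, which hinges on the fact that the flag $\Fcal_S$ is built from the \emph{decreasingly enumerated} NBC-set $S$, ensuring $C'$ is dominated by $i_k$ in the ambient order; the rest is standard circuit elimination and bookkeeping between $S$ and $S'$ as bases of $F_p$.
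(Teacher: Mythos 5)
Your proof is correct and follows essentially the same route as the paper's: both take the unique layer element of $S'$ at the first index $k$ where $S$ and $S'$ disagree, produce a circuit in $\{i_1,\ldots,i_k,j_k\}$, and split on whether $i_k$ or $j_k$ is smaller to find a broken circuit inside $S$ or $S'$. You supply a bit more detail than the paper (e.g.\ why $C$ must contain both $i_k$ and $j_k$), which is fine; the only blemish is a verbal slip in the final sentence where you write that $C'$ is \emph{dominated by} $i_k$, when in fact every element of $C'\subseteq\{i_1,\ldots,i_{k-1}\}$ is strictly \emph{greater} than $i_k$ — which is what you correctly used earlier in the argument.
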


\begin{proof}
Suppose $S'$ is a counterexample.
Then $|S'\cap(F_k\setminus F_{k-1})|=1$ for every $k\leq p$.
Let $k$ be the smallest index such that $S\cap(F_k\setminus F_{k-1})\neq S'\cap(F_k\setminus F_{k-1})=\{i'\}$.
The size $k+1$ set $\{i_1,\ldots,i_k,i'\}\subset F_k$ necessarily contains a circuit $C$, moreover, $i',i_k\in C$.
If $i'<i_k$, then $C\setminus\{i'\}\subset S$ is a broken circuit, otherwise $i_k$ must be the smallest element in $C$, and $C\setminus\{i_k\}\subset S'$ is a broken circuit.
\end{proof}

Next, we state the cellular cohomological description of the elements of the Orlik--Solomon algebra (which are ``de Rham'' in nature) due to Bj\"{o}rner--Ziegler.

\begin{theorem} \cite[Theorem 7.2]{BZ92} \label{thm:BZ_Zbasis}
Fix an arbitrary ordering of the ground set $E$, and for each NBC-set of size $p$, fix an arbitrary ordering of its element (not necessarily related to the ambient ordering of $E$).
Then $\{c^S: S\in \BC^p(\underline{M})\}$ is a $\ZZ$-basis of $H^p(\Sal_M;\ZZ)$, where for any ordered NBC-set $S=(i_1,\ldots,i_p)$ and any positively oriented $p$-simplex $\Delta$ of the {\em fine} Salvetti complex whose vertices are $$w(L_0,T_0)<w(L_1,T_1)<\ldots<w(L_p,T_p),$$ $c^S(\Delta)$ equals $1$ whenever 
\begin{enumerate}
    \item for $0\leq s<t\leq p$, $(L_s)_{i_t}=+$; and
    \item for $1\leq t\leq s\leq p$, $(L_s)_{i_t}=0$ and $(T_s)_{i_t}=+$,
\end{enumerate}
otherwise $c^S(\Delta)=0$.

Moreover, the map $BZ$ given by extending $c^S\mapsto e^*_{i_1}\wedge\ldots\wedge e^*_{i_p}$ linearly is an isomorphism between the cellular cohomology $H^p(\Sal_M;\ZZ)$ and $\OS^p(\underline{M};\ZZ)$.
\end{theorem}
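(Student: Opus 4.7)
The plan is to work on the fine Salvetti complex $\widetilde{\Sal}_M$, where every cell is a simplex, and to produce for each NBC-set $S$ a well-defined $p$-cocycle $c^S$ of the claimed form, then show these form a $\ZZ$-basis of $H^p(\Sal_M;\ZZ)$. First I would verify that conditions (1) and (2) specify a well-defined cellular $p$-cochain: two positively oriented enumerations of the same simplex differ by an even permutation of the vertices $w(L_s,T_s)$ with matching covector ranks, and the zone conditions (1)--(2) are preserved by such permutations.

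Second, I would check the cocycle identity $\delta c^S=0$ by expanding the coboundary on an arbitrary $(p+1)$-simplex $w(L_0,T_0)<\ldots<w(L_{p+1},T_{p+1})$. The descending chain $L_0\geq L_1\geq\ldots\geq L_{p+1}$ together with the ``one-sided'' conditions (1)--(2) on the coordinates $i_t\in S$ forces only very restricted interior face-deletions to contribute, and those contributions come in cancelling pairs, which yields $\delta c^S = 0$.

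For the basis property, I would test $c^S$ against the $p$-bricks $\alpha_{\Fcal_{S'},\vv,p}$ of Definition~\ref{def:pbricks}, viewed as fine-complex chains via the barycentric subdivisions described after Proposition~\ref{prop:salComplex}. The key combinatorial input is Lemma~\ref{lem:unique_NBC}: if $S'\neq S$ is another NBC-set of size $p$, then some rank-$k$ step of $\Fcal_{S'}$ avoids $S$, which forces every simplex appearing in $\alpha_{\Fcal_{S'},\vv,p}$ to fail condition (1) or (2) for $c^S$, so $c^S(\alpha_{\Fcal_{S'},\vv,p})=0$. A direct sign computation then shows $c^S(\alpha_{\Fcal_S,\vv,p})=\pm 1$. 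This produces a bi-orthogonal system, and combined with the count $|\BC^p(\underline{M})|=b_p(\underline{M})=\rank H^p(\Sal_M;\ZZ)$, it forces $\{c^S : S\in\BC^p(\underline{M})\}$ to be a $\ZZ$-basis of $H^p(\Sal_M;\ZZ)$. Since the NBC monomials form a $\ZZ$-basis of $\OS^p(\underline{M};\ZZ)$ on the other side, the basis-to-basis map $BZ$ is automatically a module isomorphism.

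The main obstacle is the sign bookkeeping in the pairing $c^S(\alpha_{\Fcal_S,\vv,p})$. The brick is a signed sum over the $2^p$ topes $T_{\bf u}$ with ${\bf u}\in U_{\Fcal,\vv,p}$ of coarse cells $Z(T_\vv\setminus F_p,T_{\bf u})$, each of which subdivides into many simplices of $\widetilde{\Sal}_M$. One must show that conditions (1)--(2) of $c^S$ select a coherent sub-collection of simplices (essentially a chamber of $\Delta(\Lcal(M|_{F_p}))$) from each $Z(T_\vv\setminus F_p,T_{\bf u})$, and that its contribution aligns with a single global sign depending only on the fixed orientation of $\Delta(\Lcal(M|_{F_p}))$ and the tope weight $w({\bf u})$. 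Once this combinatorial identity is established, the four steps close up and the theorem follows.
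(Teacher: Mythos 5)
The paper does not prove this theorem --- it is cited verbatim from Bj\"orner--Ziegler, and every later use of it in the paper takes the basis claim as given. So there is no paper argument to compare against; your proposal is an independent reconstruction, and I will evaluate it as such.

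The logical architecture you propose (exhibit cocycles $c^S$, exhibit cycles $\alpha_{\Fcal_{S'},\vv,p}$, show the pairing is $\pm\delta_{S,S'}$, conclude by a rank count) is sound, and it actually inverts the flow of the paper: the paper goes \emph{from} the basis claim of Theorem~\ref{thm:BZ_Zbasis} \emph{to} the brick basis of $H_p$ in Corollary~\ref{coro:NBC_Hp_basis}, whereas you go in the opposite direction. The nontrivial pairing computation you invoke in the third step is carried out in the paper's Proposition~\ref{prop:alpha_dual_basis}, and that proof uses only the \emph{formula} for $c^S$ (not the basis claim) together with Lemma~\ref{lem:unique_NBC}, so no circularity arises there. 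Two simplifications: your worry about permutation invariance in the first step is a non-issue, since each $p$-simplex of the order complex is a chain in the Salvetti poset and therefore carries a canonical vertex order; and the sign computation you flag as the main obstacle is exactly what Proposition~\ref{prop:alpha_dual_basis} does, so that is tractable.

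The genuine gaps are elsewhere. First, the cocycle check $\delta c^S = 0$ is the hard combinatorial heart of the argument and you only gesture at it; ``contributions come in cancelling pairs'' needs an actual case analysis over which vertex of a $(p+1)$-chain in the Salvetti poset is deleted, and the interaction between the rank conditions on $L_s$ and the sign conditions on $(L_s)_{i_t}$, $(T_s)_{i_t}$ is subtle. Second, ``independence plus rank count'' does not by itself yield a $\ZZ$-basis: you must also invoke torsion-freeness of $H_p(\Sal_M;\ZZ)$ and $H_{p-1}(\Sal_M;\ZZ)$ (so that the pairing is perfect over $\ZZ$ and the index-one argument closes). These are external facts, established in \cite{BZ92} by a different route than the explicit cocycle construction, and using them does not make the argument circular, but they need to be cited explicitly rather than left implicit in the phrase ``the count $|\BC^p(\underline{M})| = b_p(\underline{M}) = \rank H^p(\Sal_M;\ZZ)$''. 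Third, that the bricks are genuine cycles is Denham's Theorem~\ref{thm:B1}; you use this implicitly and should flag it as an input. With the cocycle verification done in detail and these inputs named, the proposal closes up into a correct proof.
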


\begin{proposition} \label{prop:alpha_dual_basis}
Let $S$ be a NBC-set of size $p$ and let $\tilde{\Fcal}_S$ be an arbitrary complete flag extending $\Fcal_S:F_0 \subsetneq  \ldots \subsetneq F_p$, also pick an arbitrary $T_{\vv}\in\Tcal_{\tilde{\Fcal}_S}$ as origin.
Then $c^{S}(\alpha_{\tilde{\Fcal}_S,\vv,p})=\pm 1$ (the precise sign depends on the orientation of $\alpha_{\tilde{\Fcal}_S,\vv,p}$ and ordering of elements of $S$), and $c^{S'}(\alpha_{\tilde{\Fcal}_S,\vv,p})=0$ for every other NBC-set $S'$.
\end{proposition}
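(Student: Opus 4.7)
The plan is to compute the pairing $c^{S'}(\alpha_{\tilde{\Fcal}_S,\vv,p})$ by expanding the brick as a signed sum of cells and each cell into top-dimensional simplices of the fine Salvetti complex. Using Definition~\ref{def:pbricks} and the canonical homeomorphism $Z(T_\vv\setminus F_p, T_\uu)\cong\Delta(\Lcal(M|_{F_p}))$, the top-dimensional simplices of each cell correspond to maximal chains $L_0'>L_1'>\ldots>L_p'=\hat 0$ in $\Lcal(M|_{F_p})$, which lift to simplices $w(L_0,T_0)<\ldots<w(L_p,T_p)$ of $\widetilde{\Sal}_M$ by extending each $L_s'$ with the signs of $T_\vv$ outside $F_p$ and setting $T_s=L_s\circ T_\uu$. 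Applying the Björner--Ziegler conditions of Theorem~\ref{thm:BZ_Zbasis}, the case $s=p$ of condition~(2) gives $(L_p)_{i_t'}=0$, which combined with $L_p=T_\vv\setminus F_p$ forces $S'\subset F_p$ (otherwise the cochain vanishes on every cell). Once $S'\subset F_p$, the remaining conditions factor into (i) sign constraints on $L_s'\in\Lcal(M|_{F_p})$ relative to $S'$, and (ii) the tope condition $T_\uu|_{S'}=+$, obtained from $(T_s)_{i_t'}=(T_\uu)_{i_t'}$ at indices where $(L_s)_{i_t'}=0$.

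Next, I would show that the sign constraints in~(i) admit a unique maximal chain of $\Lcal(M|_{F_p})$. Since $S'$ is an NBC-set of size $p$ inside a rank-$p$ matroid, it forms a basis; building the chain from $L_p'=\hat 0$ upward, the rank together with the containment $\{i_1',\ldots,i_{s-1}'\}\subseteq$ zero-set of $L_{s-1}'$ force this zero-set to equal $G_{s-1}=\langle i_1',\ldots,i_{s-1}'\rangle$, while the sign condition $(L_{s-1}')_{i_s'}=+$ identifies the unique covector extending $L_s'$ among the two topes of the rank-1 minor $M|_{G_s}/G_{s-1}$ (which are distinguished by the sign of $i_s'$). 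Each cell with $T_\uu|_{S'}=+$ therefore contributes a single simplex with sign $\epsilon=\pm 1$ determined solely by the chosen orientation of $\Delta(\Lcal(M|_{F_p}))$, hence independent of $\uu$; this yields
\[
c^{S'}(\alpha_{\tilde{\Fcal}_S,\vv,p})=\epsilon\cdot\!\!\!\sum_{\uu\in U_{\tilde{\Fcal}_S,\vv,p},\,T_\uu|_{S'}=+}\!\!\!(-1)^{w(\uu)}.
\]

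To finish, I would analyze this signed sum case-by-case. For $S'=S$, the NBC-flag $\Fcal_S$ satisfies $S\cap(F_k\setminus F_{k-1})=\{i_k\}$ for every $k\leq p$, so $T_\uu|_S=+$ fixes exactly one bit $a_k$ of the decomposition $\uu=\vv+\sum_k a_k\dd_k$ per level, determining $\uu$ uniquely and leaving the sum equal to $\pm 1$. For $S'\neq S$, Lemma~\ref{lem:unique_NBC} supplies some $k\leq p$ with $S'\cap(F_k\setminus F_{k-1})=\emptyset$; since $\supp(\dd_k)=F_k\setminus F_{k-1}$ is then disjoint from $S'$, the involution $\uu\mapsto\uu+\dd_k$ preserves the condition $T_\uu|_{S'}=+$ while flipping $(-1)^{w(\uu)}$, so contributing terms pair up and cancel. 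The main obstacle is expected to be the chain uniqueness in the middle step: while $S'$ being a basis of $M|_{F_p}$ supplies the right rank data, the inductive rank-$1$ extension argument must be executed carefully when the flats $G_{s-1}\subset G_s$ contain elements parallel to $i_s'$ beyond $i_s'$ itself. A subsidiary point is confirming that the orientation convention of Definition~\ref{def:pbricks} yields a $\uu$-independent $\epsilon$ via coherent transfer of the fixed orientation of $\Delta(\Lcal(M|_{F_p}))$ to each $Z(T_\vv\setminus F_p, T_\uu)$.
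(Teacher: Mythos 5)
Your proof is correct and follows essentially the same strategy as the paper: decompose the brick into cells and then into simplices of the fine Salvetti complex, read off the Björner--Ziegler conditions of Theorem~\ref{thm:BZ_Zbasis}, and for $S'\neq S$ invoke Lemma~\ref{lem:unique_NBC} together with the sign-reversing involution $\uu\mapsto\uu+\dd_k$. The only organizational difference is that you derive a uniform formula $c^{S'}(\alpha)=\epsilon\sum_{T_\uu|_{S'}=+}(-1)^{w(\uu)}$ before splitting into cases, and the uniqueness-of-chain step you flag as the potential obstacle is in fact unproblematic: $i_s'\notin G_{s-1}$ can never be a loop of the rank-$1$ minor $M|_{G_s}/G_{s-1}$, so the sign condition singles out exactly one of its two opposite topes.
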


\begin{proof}
Recall that $\alpha_{\tilde{\Fcal}_S,\vv,p}=\sum_{{\bf u}\in U_{\tilde{\Fcal}_S,{\bf v}, p}} (-1)^{w({\bf u})} Z(T_{\vv}\setminus F_p,T_{\bf u})$.
The $p$-dimensional simplices contained by $Z(T_{\vv}\setminus F_p,T_{\bf u})$ in the fine Salvetti complex $\widetilde{\Sal}_M$ are in one to one correspondence with the conformal flags of covectors $\Tcal\ni L_0>\ldots>L_p=T_{\vv}\setminus F_p$, namely, each such flag corresponds to the simplex $\Delta_{T_{\bf u}}^{L_0>\ldots>L_p}$ whose vertices are $w(L_0,L_0\circ T_{\bf u})<\ldots<w(L_p, L_p\circ T_{\bf u})$.

Suppose the elements of $S$ are $i_1>\ldots>i_p$ with respect to the ambient ordering.
We use the ordering $(i_1,\ldots,i_p)$ for $c^S$.
Suppose $c^S(\Delta_{T_{\bf u}}^{L_0>\ldots>L_p})\neq 0$.
The first part of Condition (2) in Theorem~\ref{thm:BZ_Zbasis} implies that $(L_s)|_{i_t}=0,\forall t\leq s$, so the complement of $\supp(L_s)$ must be $\langle i_1,\ldots,i_s\rangle=F_s$.
The second part of Condition (2) then uniquely specifies a tope $T_{\bf u}$ with ${\bf u}\in U_{\tilde{\Fcal}_S,\vv,p}$, since each condition $(T_{\bf u})_{i_k}=+$ specifies the value of $T_{\uu}$ over $F_k\setminus F_{k-1}$.
Next, as $L_0$ agrees with $T_{\vv}$ outside of $F_p$, $L_0$ must be a tope in $\Tcal_{\tilde{\Fcal}_S}$: for $k\leq p$, $L_0\setminus F_k=L_k\in\Lcal$, and for $k>p$, $L_0\setminus F_k=T_{\vv}\setminus F_k\in\Lcal$.
So similarly, Condition (1) uniquely specifies a tope $L_0\in\Tcal_\Fcal$.
Summarising, there is a unique simplex in $\alpha_{\tilde{\Fcal}_S,\vv,p}$ whose evaluation of $c^S$ is non-zero (and is $\pm 1$).

Now suppose $S'\neq S$ is another NBC-set of size $p$.
By Lemma~\ref{lem:unique_NBC}, there exists some $1\leq k\leq p$ such that $S'\cap (F_k\setminus F_{k-1})=\emptyset$.
Hence in $\alpha_{\tilde{\Fcal}_S,\vv,p}$, we have $c^{S'}(\Delta_{T_{\uu}}^{L_0>\ldots>L_p})=c^{S'}(\Delta_{T_{\uu+\dd_k}}^{L_0>\ldots>L_p})$, as the values of the topes over $F_k\setminus F_{k-1}$ are irrelevant with the conditions in Theorem~\ref{thm:BZ_Zbasis}. The two simplices are respectively contained in $Z(T_{\vv}\setminus F_p,T_{\uu})$ and $Z(T_{\vv}\setminus F_p,T_{\uu+\dd_k})$, in which our convention of orientation via canonical homeomorphism to $\Delta(\Lcal(M|_{F_p}))$ induces the same orientation of them.
However, $Z(T_{\vv}\setminus F_p,T_{\uu})$ and $Z(T_{\vv}\setminus F_p,T_{\uu+\dd_k})$ are of opposite signs in $\alpha_{\tilde{\Fcal}_S,\vv,p}$, hence the evaluations of $c^{S'}$ on $\Delta_{T_{\uu}}^{L_0>\ldots>L_p}$ and $\Delta_{T_{\uu+\dd_k}}^{L_0>\ldots>L_p}$ cancel each other out.
By pairing up the elements of $U_{\tilde{\Fcal}_S,\vv,p}$ into $\{{\bf u}, {\bf u}+\dd_k\}$'s, we have $c^{S'}(\alpha_{\tilde{\Fcal}_S,\vv,p})=0$.
\end{proof}

\begin{corollary} \label{coro:NBC_Hp_basis}
For every NBC-set $S$ of size $p$, extend the NBC-chain of $S$ arbitrarily into a complete flag $\tilde{\Fcal}_S$ and pick an $T\in\Tcal_{\tilde{\Fcal}_S}$.
Then the collection of $p$-bricks $\alpha_{\tilde{\Fcal}_S,T,p}$ (or more precisely, the homology classes they represent) form a $\ZZ$-basis of $H_p(\Sal_M;\ZZ)$.
\end{corollary}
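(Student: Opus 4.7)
The plan is to deduce this corollary directly from Proposition~\ref{prop:alpha_dual_basis} via a unimodularity argument, using the perfect pairing between homology and cohomology of $\Sal_M$. First I would record the purely homological fact that $H_p(\Sal_M;\ZZ)$ is a free $\ZZ$-module of rank $b_p(\underline{M})$ for every $p$: since $H^\bullet(\Sal_M;\ZZ)\cong \OS^\bullet(\underline{M};\ZZ)$ is torsion-free in every degree, the universal coefficient theorem forces $\operatorname{Ext}(H_{p-1}(\Sal_M;\ZZ),\ZZ)=0$, so each $H_{p}(\Sal_M;\ZZ)$ is itself torsion-free, hence free, and the evaluation pairing
\[
\langle -,-\rangle\colon H^p(\Sal_M;\ZZ)\otimes H_p(\Sal_M;\ZZ)\longrightarrow \ZZ
\]
is perfect.

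Next I would do a dimension count: by Theorem~\ref{thm:BZ_Zbasis} the set $\BC^p(\underline{M})$ has cardinality $\operatorname{rk} H^p(\Sal_M;\ZZ) = b_p(\underline{M}) = \operatorname{rk} H_p(\Sal_M;\ZZ)$. Therefore the family $\{[\alpha_{\tilde{\Fcal}_S,T,p}]\}_{S\in\BC^p(\underline{M})}$ has precisely the right cardinality to be a $\ZZ$-basis, and it suffices to exhibit a unimodular pairing matrix against any chosen $\ZZ$-basis of $H^p(\Sal_M;\ZZ)$.

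The heart of the argument is then Proposition~\ref{prop:alpha_dual_basis}: when we evaluate the NBC cohomology basis $\{c^{S'}\}_{S'\in\BC^p(\underline{M})}$ on the $p$-bricks $\{[\alpha_{\tilde{\Fcal}_S,T,p}]\}_{S\in\BC^p(\underline{M})}$ and index both bases by the same set $\BC^p(\underline{M})$, the resulting square matrix
\[
\bigl(c^{S'}(\alpha_{\tilde{\Fcal}_S,T,p})\bigr)_{S,S'}
\]
is diagonal with $\pm 1$ entries. Such a matrix is unimodular over $\ZZ$, so under the perfect pairing recalled above, the classes $[\alpha_{\tilde{\Fcal}_S,T,p}]$ are $\ZZ$-linearly independent and span a full-rank free submodule, whence they form a $\ZZ$-basis of $H_p(\Sal_M;\ZZ)$ (dual, up to signs, to the NBC basis of $H^p(\Sal_M;\ZZ)$); the signs can in fact be absorbed by the freedom of orienting each $\Delta(\Lcal(M|_{F_p}))$ used to define the bricks.

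The main obstacle I would anticipate is confirming that the cycles $\alpha_{\tilde{\Fcal}_S,T,p}$ remain independent already at the level of integral homology (rather than only after tensoring with $\QQ$), since a priori their spanning could fail by an index-divisibility issue. However, this is exactly what the diagonal-with-$\pm 1$-entries conclusion of Proposition~\ref{prop:alpha_dual_basis} rules out, so no further calculation beyond what is packaged there should be needed.
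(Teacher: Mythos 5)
Your proof is correct and follows essentially the same route as the paper: both arguments invoke the universal coefficient theorem together with torsion-freeness of $H^\bullet(\Sal_M;\ZZ)$ to obtain the perfect pairing between integral homology and cohomology, and then read off from Proposition~\ref{prop:alpha_dual_basis} that the classes $[\alpha_{\tilde{\Fcal}_S,T,p}]$ are (up to sign) the dual basis to the NBC basis $\{c^S\}$ of Theorem~\ref{thm:BZ_Zbasis}. Your explicit ``diagonal unimodular matrix'' phrasing is just a slightly more verbose rendering of the paper's one-line ``dual elements up to signs'' conclusion; no new idea is introduced and none is missing.
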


\begin{proof}
By the universal coefficient theorem and the fact that all homology and cohomology groups of $\Sal_M$ are torsion free, $H_p(\Sal_M;\ZZ)$ is canonically isomorphic to the dual of $H^p(\Sal_M;\ZZ)$.
By Theorem~\ref{thm:BZ_Zbasis}, $c^S$'s form a $\ZZ$-basis of $H^p(\Sal_M;\ZZ)$, and by the conventions of Proposition~\ref{prop:alpha_dual_basis}, the dual elements with respect to this basis are, up to signs, the $\alpha_{\tilde{\Fcal}_S,T,p}$'s. Hence they form a $\ZZ$-basis of $H_p(\Sal_M;\ZZ)$.
\end{proof}

Since we work with $\Ztwo$-coefficients in the theorem, the isomorphism $BZ$ is independent of the ordering and orientation convention.
Dualising the isomorphism $BZ$, we obtain $BZ^\vee:H_p(\Sal_M;\Ztwo)\xrightarrow{\cong}\OS_p(\underline{M};\Ztwo)$.
Hence, we can compare the map $\overline{\abv}_p:\overline{\Pcal_p(M)}\rightarrow H_p(\Sal_M;\Ztwo)$ described in Theorem~\ref{thm:B1} and the map $\qbv_p:\Qcal_p(M) \rightarrow \OS_p(\underline{M};\Ztwo)$ as described in \cite[Definition 4.9]{RS} in Equation \ref{eq:defqp} as follows:
\begin{equation} \label{eq:bv_eq_bv}
\begin{CD}
\overline{\Pcal}_p(M)    @>\overline{\abv}_p>>  H_p(\Sal_M;\Ztwo)\\
@V\parallel VV        @V BZ^\vee VV\\
\Qcal_p(M)      @>\qbv_p>>  \OS_p(\underline{M};\Ztwo)
\end{CD}
\end{equation}

\begin{theorem} \label{thm:B2}
The diagram (\ref{eq:bv_eq_bv}) commutes.
\end{theorem}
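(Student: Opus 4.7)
The plan is to check equality of the two maps in diagram (\ref{eq:bv_eq_bv}) by evaluating them on a spanning set of $\Qcal_p(M)=\overline{\Pcal}_p(M)$. Both $\qbv_p$ (Theorem~\ref{thm:sign_cosheaf_dim}) and $BZ^\vee\circ\overline{\abv}_p$ (Theorem~\ref{thm:B1}) factor through isomorphisms on the associated graded $\Qcal_p(M)/\Qcal_{p+1}(M)$ onto $\OS_p(\underline{M};\Ztwo)$, so it suffices to verify agreement on a basis of this quotient. The NBC-chains $\{\gamma_{\tilde{\Fcal}_S,\vv,p}:S\in\BC^p(\underline{M})\}$ form such a basis, because their images $[\alpha_{\tilde{\Fcal}_S,\vv,p}]$ form a $\Ztwo$-basis of $H_p(\Sal_M;\Ztwo)$ by the mod~$2$ reduction of Corollary~\ref{coro:NBC_Hp_basis}.

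Fix an NBC-set $S=\{i_1>\ldots>i_p\}$ with associated flag $F_k=\langle i_1,\ldots,i_k\rangle$. For the upper path of the diagram, Theorem~\ref{thm:B1} gives $\overline{\abv}_p(\gamma_{\tilde{\Fcal}_S,\vv,p})=[\alpha_{\tilde{\Fcal}_S,\vv,p}]$, and by Proposition~\ref{prop:alpha_dual_basis} this class is dual to $c^S$ with respect to the NBC cohomology basis. Since $BZ$ sends $c^S$ to $e^*_{i_1}\wedge\ldots\wedge e^*_{i_p}$, the composition $BZ^\vee\circ\overline{\abv}_p$ sends $\gamma_{\tilde{\Fcal}_S,\vv,p}$ to the element of $\OS_p(\underline{M};\Ztwo)$ dual to $e^*_{i_1}\wedge\ldots\wedge e^*_{i_p}$ under the NBC basis of $\OS^p$. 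For the lower path, Equation~(\ref{eq:defqp}) applied to the support $U_{\tilde{\Fcal}_S,\vv,p}=\vv+\langle\dd_1,\ldots,\dd_p\rangle$ from Definition~\ref{def:prefixchains} gives $\qbv_p(\gamma_{\tilde{\Fcal}_S,\vv,p})=\dd_1\wedge\ldots\wedge\dd_p$, where $\dd_k=\ee_{F_k\setminus F_{k-1}}$.

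The content of the proof is therefore the identity $(e^*_{j_1}\wedge\ldots\wedge e^*_{j_p})(\dd_1\wedge\ldots\wedge\dd_p)=\delta_{S,S'}$ for every pair of NBC-sets $S,S'=\{j_1>\ldots>j_p\}$, which realises $\dd_1\wedge\ldots\wedge\dd_p$ as the dual of $e^*_{i_1}\wedge\ldots\wedge e^*_{i_p}$. The left-hand side is the $\Ztwo$-determinant $\det(e^*_{j_\ell}(\dd_m))_{\ell,m}$. In the diagonal case $S=S'$: since $S$ is independent, $i_k\notin F_{k-1}$, so $i_k\in F_k\setminus F_{k-1}$; also $i_k\in F_k\subseteq F_{m-1}$ for $m>k$, and $i_k\notin F_m$ for $m<k$ (otherwise $\{i_1,\ldots,i_m,i_k\}$ would be a dependent subset of $S$). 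Hence $e^*_{i_k}(\dd_m)=\delta_{k,m}$ and the determinant is $1$. In the off-diagonal case $S\neq S'$, Lemma~\ref{lem:unique_NBC} supplies some $k$ with $S'\cap(F_k\setminus F_{k-1})=\emptyset$, so the $k$-th column of the matrix vanishes and the determinant is $0$.

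Together these steps show the diagram commutes on each NBC-chain, and hence on all of $\Qcal_p(M)$. The main technical nuisance is the bookkeeping of the two exterior algebras, namely the ``dual'' sides $\OS^p$ and $\OS_p$, and their interplay with the NBC-basis; once the NBC-chains are isolated as a spanning set and the generators of $\OS_p$ from Proposition~\ref{prop:OSZhar} are rewritten with respect to the basis $\{\dd_1,\ldots,\dd_p\}$ via the upper-triangular change of basis $\ee_{F_k}=\dd_1+\ldots+\dd_k$, the pairing computation reduces to the two-case determinant check above.
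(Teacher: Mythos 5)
Your proof is correct and follows essentially the same route as the paper's: both identify the image of an NBC-chain under $BZ^\vee\circ\overline{\abv}_p$ as the dual basis element of $e^*_{i_1}\wedge\ldots\wedge e^*_{i_p}$ via Proposition~\ref{prop:alpha_dual_basis}, compute $\qbv_p$ of the same chain as $\dd_1\wedge\ldots\wedge\dd_p$, and verify the duality by the determinant computation together with Lemma~\ref{lem:unique_NBC}. The only stylistic difference is the initial reduction — you check a fixed-ordering NBC basis of the quotient, whereas the paper checks all prefix chains by re-choosing the ordering of $E$ for each one — but these reductions are logically equivalent.
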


\begin{proof}
It suffices to verify the commutativity for prefix chains on the left hand side.
Let $\gamma_{\Fcal,\vv,p}$ be a prefix chain with $\Fcal:F_0\subsetneq \ldots \subsetneq F_d$, in which we may assume that it is a NBC-chain associated with $S\in\BC^p(\underline{M})$ by choosing a suitable ordering of $E$, so $S=\{i_1,\ldots,i_p\}$ is given by $i_j=\min F_j\setminus F_{j-1}$.
By Corollary~\ref{coro:NBC_Hp_basis}, the image $\alpha_{\Fcal,\vv,p}$ of the chain is the dual element, indexed by $S$, with respect to the basis $\{c^S:S\in\BC^p(\underline{M})\}$ of $H^p(\Sal_M;\Ztwo)$.

We claim that under $BZ^\vee$, the corresponding element in $\OS_p(\underline{M};\Ztwo)$ is $e_{F_1}\wedge e_{F_2\setminus F_1}\wedge\ldots\wedge e_{F_p\setminus F_{p-1}}$.
The element in $\OS^p(\underline{M};\Ztwo)$ corresponding to $c^S$ is $e^*_{i_1}\wedge\ldots\wedge e^*_{i_p}$, whose evaluation on $e_{F_1}\wedge e_{F_2\setminus F_1}\wedge\ldots\wedge e_{F_p\setminus F_{p-1}}$ equals $\det(e^*_{i_j}(e_{F_k\setminus F_{k-1}}))=\det(\delta_{j,k})=1$.
For any other NBC-set $S'=\{i'_1,\ldots,i'_p\}\neq S$ of size $p$, by Lemma~\ref{lem:unique_NBC}, there exists $k\leq p$ such that $S'\cap(F_k\setminus F_{k-1})=\emptyset$, so $e^*_{i'_j}(e_{F_k\setminus F_{k-1}})=0,\forall j$, which implies $e^*_{i'_1}\wedge\ldots\wedge e^*_{i'_p}$ evaluates to zero on $e_{F_1}\wedge e_{F_2\setminus F_1}\wedge\ldots\wedge e_{F_p\setminus F_{p-1}}$. This proves the claim. 

Finally, consider the image of a prefix chain $\gamma_{\Fcal,\vv,p}$ under $\qbv_p$. 
By the translation of $\Tcal_{\Fcal}$ by $\vv$, we obtain the subspace $V$.
The image of $e_{F_1}\wedge e_{F_2\setminus F_1}\wedge\ldots\wedge e_{F_p\setminus F_{p-1}}\in \bigwedge^p V$ under the local isomorphism $\bigwedge^p V\cong\Ical_p(V)/\Ical_{p+1}(V)$ is $[\sum_{\uu\in\langle \dd_1,\ldots,\dd_p\rangle} T_{\uu}]=[\gamma_{\Fcal,{\bf 0},p}]$ (see Section~\ref{sec:Quillen}).
Re-translate back by $\vv$ shows that $e_{F_1}\wedge e_{F_2\setminus F_1}\wedge\ldots\wedge e_{F_p\setminus F_{p-1}}$ is the image of $\gamma_{\Fcal,\vv,p}$ under $\qbv_p$ as wanted. Thus the diagram commutes. 
\end{proof}

\begin{proof}[Proof of Theorem \ref{mainthm:maps}]
By Theorem \ref{mainthm:Z2} we have that $\overline{\Pcal}_p$, $\Kcal_p$, and $\Qcal_p$ are all the same subspace of the $\Ztwo$-tope space.
The equality of the maps $bv_p$ and $\overline{a}_p$ is Proposition~\ref{prop:a_eq_bv}, and the equality of these two maps and $\qbv_p$ up to $BZ^\vee$ is Theorem~\ref{thm:B2}.
\end{proof}

\section{A $\ZZ$-coefficients Filtration of Sign Cosheaf on a Matroid Fan} \label{sec:cosheaf}

In this section, we show that the dual Varchenko--Gelfand filtration of the $\ZZ$-tope space of an oriented matroid from Section \ref{sec:GV} can be made into a filtration of a $\ZZ$-variant of the sign cosheaf on the polyhedral fan of a matroid from \cite{RRS2}. We work over $\ZZ$ throughout this section unless otherwise specified. 

\subsection{Some Properties of Cordovil Algebra and its Dual} \label{sec:Cordovil}

We collect some properties of the Cordovil algebra, as well as formulate a notion of its dual algebra and prove some basic properties. For $S\subset E$, we denote $(x^*)^S:=\prod_{i\in S} x^*_i$ and similarly $x^S:=\prod_{i\in S} x_i$.

\begin{theorem} \cite[Corollary~2.5 and Corollary~2.8]{Cord02}
The element $(x^*)^S$ is nonzero in $\Acal^\bullet(M)$ if and only if $S$ is independent. Moreover, for any ordering of $E$, $(x^*)^S$'s, ranging over all NBC-sets $S\in\BC^p(\underline{M})$, form a $\ZZ$-basis of $\Acal^p(M)$. 
\end{theorem}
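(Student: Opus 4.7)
The plan is to mimic the classical NBC-basis proof for the Orlik--Solomon algebra, adapted to the commutative square-zero setting. First I would handle the ``only if'' direction of the nonvanishing claim: if $S$ is dependent, pick a circuit $C=\{e_{i_1},\ldots,e_{i_t}\}\subseteq S$. Multiplying the defining relation $\sum_{k=1}^t C(i_k)(x^*)^{C\setminus\{i_k\}}=0$ by $x^*_{i_k}$ kills every $j\neq k$ term via $(x^*_{i_k})^2=0$ and leaves $C(i_k)(x^*)^C=0$; since $C(i_k)=\pm 1$, this gives $(x^*)^C=0$, and multiplying by $(x^*)^{S\setminus C}$ yields $(x^*)^S=0$.

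Next I would show that the NBC-monomials span $\Acal^p(M;\ZZ)$. Fix the chosen ordering of $E$ and a well-founded term order on $p$-subsets of $E$; for definiteness, declare $S\prec S'$ iff $\min(S\triangle S')\in S$, so that swapping a larger element for a smaller one produces a strictly earlier set. Suppose $S$ contains a broken circuit $C\setminus\{i_1\}$ with $i_1=\min C$. If $i_1\in S$, then $C\subseteq S$ and Step~1 gives $(x^*)^S=0$. Otherwise, multiplying the circuit relation for $C$ by $(x^*)^{S\setminus(C\setminus\{i_1\})}$, which does not contain $x^*_{i_1}$, expresses $\pm(x^*)^S$ as a $\ZZ$-combination of monomials $(x^*)^{S'}$ with $S'=(S\setminus\{i_j\})\cup\{i_1\}$ for $j\neq 1$. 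Each such $S'$ replaces $i_j$ by the strictly smaller $i_1$, hence $S'\prec S$, and descending induction rewrites every $(x^*)^S$ as a $\ZZ$-linear combination of NBC-monomials.

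For the final step, the excerpt already states that $\Acal^p(M;\ZZ)$ is torsion-free of rank $b_p(\underline{M})=|\BC^p(\underline{M})|$ via the deletion-contraction recurrence [Cord02, Theorem~2.7]. Since the NBC spanning set from the previous step has exactly this cardinality, it must be a $\ZZ$-basis of $\Acal^p(M;\ZZ)$. The ``if'' direction of the first claim then follows by mod-$2$ reduction: if $S$ is independent but $(x^*)^S=0$ integrally, its image in $\Acal^p(M;\Ztwo)\cong\OS^p(\underline{M};\Ztwo)$ would vanish, contradicting the classical nonvanishing of $e^*_S$ for independent $S$ in the Orlik--Solomon algebra.

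I expect the main subtlety to lie in the straightening step: one must verify that the chosen term order is genuinely well-founded under the exchange $i_j\mapsto i_1$ and that the rewriting terminates without an unforeseen reintroduction of broken circuits. This is essentially the bookkeeping of the classical Orlik--Solomon argument and transfers with only cosmetic changes, since the square-zero relations $(x^*_i)^2=0$ replace the role of anti-commutativity in eliminating repeated-index terms. Everything else reduces to input already recorded in the excerpt, namely the torsion-free rank of $\Acal^p(M;\ZZ)$ and the mod-$2$ isomorphism with $\OS^p(\underline{M};\Ztwo)$.
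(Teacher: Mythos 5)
The paper does not prove this theorem; it is cited verbatim from Cordovil's paper, so there is no in-paper argument to compare against. On its own merits, your proof is correct and follows the standard NBC-straightening strategy, adapted cleanly to the commutative square-zero setting: the only-if direction via multiplying the circuit relation by $x^*_{i_k}$ to isolate $(x^*)^C=0$; spanning by NBC monomials via straightening along the colexicographic (squashed) order, where the exchange $i_k\mapsto i_1=\min C$ strictly decreases the set; and the basis claim from the rank count together with torsion-freeness of $\Acal^p(M;\ZZ)$ (a spanning set of cardinality equal to the rank of a finite free $\ZZ$-module is a basis, since a surjective endomorphism of $\ZZ^n$ is injective). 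The if-direction via mod-$2$ reduction to $\OS^p(\underline{M};\Ztwo)$ is legitimate and indeed necessary: the straightening step alone does not rule out that the resulting $\ZZ$-combination of NBC monomials might vanish through cancellation, so you need the nonvanishing of $e^*_S$ in the Orlik--Solomon algebra (or, alternatively, a pairing with a flag element $\epsilon_{\Fcal,\vv,p}\in\Acal_p$ built from a flag refining $\operatorname{cl}(\{i_1\})\subsetneq\cdots\subsetneq\operatorname{cl}(S)$, which evaluates to $\pm1$). The well-foundedness concern you flag is benign: the order you chose is total on $p$-subsets of a finite set, so the induction terminates, and since every non-NBC independent $p$-set admits a further rewriting step, the procedure can only halt on NBC monomials or on zero.

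One small caution to keep in mind: your appeal to torsion-freeness and the rank count relies on the deletion--contraction result being available \emph{prior} to this theorem (in Cordovil's paper it is Theorem~2.7 versus Corollaries~2.5 and~2.8, so the ordering is consistent), but if one were writing this from scratch it would be worth checking that the deletion--contraction proof does not itself presuppose the NBC basis.
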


We think of $\Acal^\bullet(M)$ as the quotient of $\mathbb{Z}[x_i^*:i\in E]/\langle (x^*_i)^2: i\in E\rangle$, which has a canonical dual algebra\footnote{``SF'' stands for ``square-free''.} ${\rm SFSym}(E):=\mathbb{Z}[x_i:i\in E]/\langle x_i^2: i\in E\rangle$ given by the pairing $\langle(x^*)^S, x^T\rangle=\delta_{S,T}$, extended linearly. As such, we can define the graded dual $\Acal_\bullet(M)$ of $\Acal^\bullet(M)$, which is a subalgebra of ${\rm SFSym}(E)$.

\begin{definition}
    Given a datum $(\Fcal,\vv,p)$ that defines a prefix chain, define
    $$\epsilon_{\Fcal,\vv,p}=\prod_{i=1}^p\big(\sum_{j\in F_i\setminus F_{i-1}} T_\vv(j)x_j\big) \in {\rm SFSym}(E).$$
\end{definition}

Via the obvious isomorphism $\bigwedge(\Ztwo)^E$ and ${\rm SFSym}(E)\otimes\Ztwo$ given by the identification $e_{i_1}\wedge\ldots\wedge e_{i_p}\leftrightarrow x_{i_1}\ldots x_{i_p}$, the element $\epsilon_{\Fcal,\vv,p}$ is equal to $\qbv_p(\gamma_{\Fcal,\vv,p})$ over $\Ztwo$ from the calculation in the proof of Theorem~\ref{thm:B2}. Also note that a monomial is in the support of $\epsilon_{\Fcal,\vv,p}$ if and only if the indices of its variables form a transversal of $F_1\setminus F_0,\ldots, F_p\setminus F_{p-1}$.

\begin{lemma}
    The element $\epsilon_{\Fcal,\vv,p}$ is in $\Acal_p(M)$ for every prefix chain datum $(\Fcal,\vv,p)$.
\end{lemma}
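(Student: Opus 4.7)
The plan is to verify $\epsilon_{\Fcal,\vv,p} \in \Acal_p(M)$ by direct pairing against generators of the defining ideal of $\Acal^\bullet(M)$. Since $\Acal_\bullet(M)$ is the graded dual of $\Acal^\bullet(M)$ inside ${\rm SFSym}(E)$, the claim amounts to
$$\sum_{k=1}^t C(i_k) \cdot \bigl[\text{coefficient of } x^{A\setminus\{i_k\}} \text{ in } \epsilon_{\Fcal,\vv,p}\bigr] = 0$$
for every signed circuit $C$ with $\supp(C) = \{i_1,\ldots,i_t\}$ and every $S \subset E \setminus \supp(C)$ with $|S| = p+1-t$, where $A := S \cup \supp(C)$. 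Reorienting $M$ by $T_\vv$ induces an isomorphism $\Acal_\bullet(M) \cong \Acal_\bullet(M^{T_\vv})$ via $x_i \mapsto T_\vv(i) x_i$, under which $\epsilon_{\Fcal,\vv,p}$ corresponds to $\prod_{j=1}^p \bigl(\sum_{i \in F_j\setminus F_{j-1}} x_i\bigr)$; hence we may assume $T_\vv = (+,\ldots,+)$ and that the coefficient of $x^T$ in $\epsilon_{\Fcal,\vv,p}$ is $1$ when $T$ is a transversal of the levels $I_j := F_j\setminus F_{j-1}$ ($1 \leq j \leq p$) and $0$ otherwise.

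Next I would classify $A$ by its profile $a_j := |A\cap I_j|$ ($j\leq p$) and $a_{p+1} := |A\setminus F_p|$. Checking when $A\setminus\{i_k\}$ is a transversal of $\{I_j\}_{j\leq p}$ shows that the pairing sum can be nonzero only in two configurations: (I) $a_{p+1}=0$ with a unique $j_0$ satisfying $a_{j_0}=2$ and $a_j=1$ for $j\neq j_0$, writing $\{a,b\} := A\cap I_{j_0}$; or (II) $a_{p+1}=1$ with $a_j=1$ for all $j\leq p$, writing $\{b\} := A\setminus F_p$. Every other profile contributes nothing to the pairing.

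The heart of the argument is to use the minimality of circuits to eliminate most of the remaining subcases. Enumerating the elements of $C$ in increasing flag-level order, any $c \in C\cap I_j$ lies outside $F_{j-1}$ and hence outside the closure of the previously added elements (which all sit in $F_{j-1}$), so it contributes exactly $+1$ to the rank. In configuration (II), as well as configuration (I) with $|C\cap I_{j_0}|=1$, this accounting forces the total rank of $C$ to be $t$, contradicting $\rank(C) = t-1$. In configuration (I) with $|C\cap I_{j_0}|=2$, the same calculation shows that the second element of $A\cap I_{j_0}$ does not add rank after $(C\cap F_{j_0-1}) \cup \{a\}$; any element of $C$ lying outside $F_{j_0}$ would then make $(C\cap F_{j_0-1})\cup\{a,b\}$ a proper dependent subset of $C$, contradicting minimality. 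Hence in the only surviving case, $\supp(C) \subset F_{j_0}$ with $\supp(C)\setminus F_{j_0-1} = \{a,b\}$.

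In this sole surviving subcase the pairing collapses to $\bigl(\prod_{j\neq j_0} T_\vv(e_j)\bigr)\bigl(C(a) T_\vv(b) + C(b) T_\vv(a)\bigr)$, so it suffices to prove $C(a) T_\vv(a) = -C(b) T_\vv(b)$, i.e., $C(a) = -C(b)$ in $M^{T_\vv}$. I would pass to the contraction $M/F_{j_0-1}$: the covector $T_\vv \setminus F_{j_0-1}$ of $M$ descends to a covector of $M/F_{j_0-1}$ whose restriction to $\{a,b\}$ is the tope $(+,+)$ of the rank-one oriented matroid $(M/F_{j_0-1})|_{\{a,b\}}$. By tope-circuit orthogonality, the parallel circuit on $\{a,b\}$ in this rank-one matroid has opposite signs, so every vector of $(M/F_{j_0-1})|_{\{a,b\}}$ supported on both $a$ and $b$ has opposite signs there. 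Since $C$ is a vector of $M$, its restriction $C|_{E\setminus F_{j_0-1}}$ is a vector of $M/F_{j_0-1}$ whose support is exactly $\{a,b\}$, forcing $C(a)=-C(b)$ in $M^{T_\vv}$ and delivering the required cancellation. The main technical hurdle will be the careful rank bookkeeping that rules out the impossible profiles; the final contraction/orthogonality step is then essentially formal.
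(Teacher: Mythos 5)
Your proof is correct and follows essentially the same strategy as the paper's: pair $\epsilon_{\Fcal,\vv,p}$ against each degree-$p$ generator of the Cordovil ideal, show by rank bookkeeping that the pairing can only survive when $\supp(C)\setminus F_{j_0-1}=\{a,b\}\subset I_{j_0}$ and $\supp(C)\subset F_{j_0}$, and then invoke circuit--cocircuit orthogonality in a rank-one minor to obtain the cancellation. Your version is somewhat more thorough than the printed proof: the paper only writes out the case $|\supp(C)|=p+1$ (so $S=\emptyset$) with both repeated indices landing in the top level $I_p$, while you handle arbitrary relations $(x^*)^S\partial C$ and classify the profiles of $A=S\cup\supp(C)$ explicitly, using the rank count to rule out configuration (II) and the sub-case of (I) where only one of $a,b$ lies in $\supp(C)$ --- cases the paper leaves to the reader. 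The reorientation by $T_\vv$ is a clean normalization the paper does not bother with, and your passage to $(M/F_{j_0-1})|_{\{a,b\}}$ (rather than the paper's $M|_{F_{j_0}}/F_{j_0-1}$) is an equivalent vehicle for the final orthogonality step once $\supp(C)\subset F_{j_0}$ is established.
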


\begin{proof}
    We verify that $\epsilon_{\Fcal,\vv,p}$ is killed by any element of the form $$\partial C:=\sum_{k=1}^{p+1} C(i_k) x^*_{i_1}\ldots\widehat{x^*_{i_k}}\ldots x^*_{i_t}$$ from Definition~\ref{def:Cordovil}, where $C$ is a signed circuit of $M$ supported on $\{i_1,\ldots,i_{p+1}\}$. In order to simplify the notation, we assume $i_k=k$ for all $k$.
    The statement is trivial when $\{1,\ldots,p+1\}\cap (F_i\setminus F_{i-1})=\emptyset$ for some $i$, so without loss of generality we assume $i\in F_i\setminus F_{i-1}$ for every $i\leq p$, and $p+1\in F_p\setminus F_{p-1}$. The only two terms in $\partial C$ that do not vanish over $\epsilon_{\Fcal,\vv,p}$ are $C(p)x_1^*\ldots \widehat{x_p^*}x_{p+1}^*$ and $C(p+1)x_1^*\ldots x_p^*$, with the dual terms in $\epsilon_{\Fcal,\vv,p}$ having coefficients $T_\vv(1)\ldots \widehat{T_\vv(p)}T_\vv(p+1)$ and $T_\vv(1)\ldots T_\vv(p)$, respectively. 
    Summarizing, the evaluation is zero if $C(p)C(p+1)=-T(p)T(p+1)$, which follows from the orthogonality of signed circuits and signed cocircuits: consider the restriction of $C$ and $T$ as a signed circuit (supported on $\{p,p+1\}$) and a cocircuit (supported on $F_p\setminus F_{p-1}$) of the rank 1 oriented matroid $M|_{F_p}/F_{p-1}$.
\end{proof}

\begin{proposition} \label{prop:gamma_epsilon}
    The map $\tilde{a}_p:\Pcal_p(M) \rightarrow\Acal_p(M)$ given by extending $\gamma_{\Fcal,\vv,p}\mapsto \epsilon_{\Fcal,\vv,p}$ linearly is well-defined, surjective, and has kernel equal to $\Pcal_{p+1}(M)$. In particular, $\tilde{a}_p$ descends to an isomorphism $\Pcal_p(M)/\Pcal_{p+1}(M)\cong \Acal_p(M)$. 
\end{proposition}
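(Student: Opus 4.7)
The plan is to prove the statement through a single pairing identity: for any subset $\{j_1,\ldots,j_p\}\subseteq E$ of size $p$,
\[
h_{j_1}\cdots h_{j_p}(\gamma_{\Fcal,\vv,p}) = [x_{j_1}\cdots x_{j_p}]\,\epsilon_{\Fcal,\vv,p},
\]
where the right-hand side denotes the coefficient of the squarefree monomial in $\epsilon_{\Fcal,\vv,p}\in{\rm SFSym}(E)$. The identity splits into two cases. When $\{j_1,\ldots,j_p\}$ fails to be a transversal of the rank-one partition $F_1\setminus F_0,\ldots,F_p\setminus F_{p-1}$, both sides vanish: the left-hand side by the pigeonhole-and-pairing argument used in Proposition~\ref{prop:prefix_to_coor} (some $\dd_k$ meets no $j_i$ and pairs the topes into sign-cancelling couples), and the right-hand side since $\epsilon_{\Fcal,\vv,p}$ expands as a sum over transversals. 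When $\{j_1,\ldots,j_p\}$ is a transversal, say with $j_k\in F_k\setminus F_{k-1}$, the right-hand side is visibly $\prod_k T_\vv(j_k)$; on the left, exactly one tope $T_\uu\in U_{\Fcal,\vv,p}$ (namely $\uu=\vv+\sum_k\vv(j_k)\dd_k$) satisfies $T_\uu(j_k)=+$ for every $k$, and its signed weight $(-1)^{w(\uu)}=\prod_k(-1)^{\vv(j_k)}$ recovers the same product.

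Well-definedness of $\tilde{a}_p$ then follows at once: if $\sum_i c_i\gamma_{\Fcal_i,\vv_i,p}=0$ in $\Pcal_p(M)$, applying each $h_{j_1}\cdots h_{j_p}$ forces all monomial coefficients of $\sum_i c_i\epsilon_{\Fcal_i,\vv_i,p}$ to vanish, so the sum is zero in ${\rm SFSym}(E)$ and hence in $\Acal_p(M)$. Likewise $\Pcal_{p+1}(M)\subseteq\ker\tilde{a}_p$: any $\gamma\in\Pcal_{p+1}(M)$ annihilates $\Pcal^p(M)\ni h_{j_1}\cdots h_{j_p}$, so every monomial coefficient of $\tilde{a}_p(\gamma)$ vanishes by the same identity.

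For surjectivity, fix an ordering of $E$ and, for each NBC-set $S=\{i_1>\cdots>i_p\}$, extend the associated NBC-flag $\Fcal_S$ to a complete flag $\tilde{\Fcal}_S$ with any origin $\vv$. By Lemma~\ref{lem:unique_NBC}, any other NBC-set $S'$ of size $p$ fails to be a transversal of $F_1\setminus F_0,\ldots,F_p\setminus F_{p-1}$, so $[x^{S'}]\epsilon_{\tilde{\Fcal}_S,\vv,p}=0$, whereas $[x^S]\epsilon_{\tilde{\Fcal}_S,\vv,p}=\prod_k T_\vv(i_k)=\pm1$. Thus $\{\epsilon_{\tilde{\Fcal}_S,\vv,p}:S\in\BC^p(\underline{M})\}$ is, up to signs, the basis of $\Acal_p(M)$ dual to the NBC-basis $\{(x^*)^S\}$ of $\Acal^p(M)$, proving that $\tilde{a}_p$ is surjective. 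The induced map $\Pcal_p(M)/\Pcal_{p+1}(M)\to\Acal_p(M)$ is then a surjection between free $\ZZ$-modules of rank $b_p(\underline{M})$---the source by Denham's Theorem~\ref{thm:B1} combined with Bj\"orner--Ziegler, the target by the discussion after Definition~\ref{def:Cordovil}---hence an isomorphism, which identifies $\ker\tilde{a}_p$ with $\Pcal_{p+1}(M)$. The step that demands the most care is the sign-matching in the transversal case of the pairing identity; once it is pinned down the remainder of the proof is formal.
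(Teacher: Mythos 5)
Your proof is correct, but it takes a genuinely different route from the paper's. Both arguments rest on the same pairing identity --- that evaluating $h_{j_1}\cdots h_{j_p}$ on $\gamma_{\Fcal,\vv,p}$ returns the coefficient of $x_{j_1}\cdots x_{j_p}$ in $\epsilon_{\Fcal,\vv,p}$, verified by the same transversal/non-transversal case split and the same sign bookkeeping you carry out. Where you diverge is in how the isomorphism is then established. The paper invokes Moseley's theorem that $(x^*)^S \mapsto \bigl[\prod_{i\in S} h_i\bigr]$ gives an isomorphism $\Acal^p(M) \to \Pcal^p(M)/\Pcal^{p-1}(M)$, and exhibits $\Pcal_p(M)/\Pcal_{p+1}(M) \to \Acal_p(M)$ as the composition of that isomorphism with two perfect-pairing duality isomorphisms; the pairing identity then merely certifies that this composite equals the prefix-chain map. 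You avoid Moseley's theorem entirely: you establish surjectivity directly, using Lemma~\ref{lem:unique_NBC} to show that the $\epsilon_{\tilde{\Fcal}_S,\vv,p}$ over NBC-flags form the basis of $\Acal_p(M)$ dual (up to signs) to the NBC-basis of $\Acal^p(M)$, and you obtain injectivity of the induced map from Denham's Theorem~\ref{thm:B1} --- which gives that $\Pcal_p(M)/\Pcal_{p+1}(M)$ is free of rank $b_p(\underline{M})$ --- combined with the fact that a surjection of free $\ZZ$-modules of the same finite rank is an isomorphism. You trade one external input for another (Denham's brick isomorphism in place of Moseley's), and as a bonus your argument simultaneously yields the NBC dual basis of $\Pcal_p(M)/\Pcal_{p+1}(M)$, which the paper records separately afterwards as Corollary~\ref{coro:NBC_Pp_basis}.
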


\begin{proof}
    We show that the map $\Pcal_p(M)/\Pcal_{p+1}(M)\rightarrow\Acal_p(M)$ given by extending $[\gamma_{\Fcal,\vv,p}]\mapsto \epsilon_{\Fcal,\vv,p}$ linearly is well-defined and is an isomorphism. The statement in the proposition follows from composing the quotient map $\Pcal_p(M)\rightarrow\Pcal_p(M)/\Pcal_{p+1}(M)$ with this isomorphism.

    We claim that the map $\Pcal_p(M)/\Pcal_{p+1}(M)\rightarrow\Acal_p(M)$ is the composition of three isomorphisms: 
    \begin{enumerate}
\item  $\Pcal_p(M)/\Pcal_{p+1}(M)\cong \Hom(\Pcal^p(M)/\Pcal^{p-1}(M), \ZZ)$, induced from the perfect pairing $\Pcal^p(M)/\Pcal^{p-1}(M)\times \Pcal_p(M)/\Pcal_{p+1}(M)\rightarrow\ZZ$; 
\item $\Hom(\Pcal^p(M)/\Pcal^{p-1}(M), \ZZ)\cong \Hom(\Acal^p(M), \ZZ)$, induced by the pullback of the isomorphism $\Acal^p(M)\cong \Pcal^p(M)/\Pcal^{p-1}(M)$ given by $(x^*)^S\mapsto [\prod_{i\in S} h_i]$ \cite[Theorem~5.9]{Mose17}; 
\item $\Acal_p(M)\cong \Hom(\Acal^p(M), \ZZ)$, induced by the perfect pairing $\Acal^p(M)\times \Acal_p(M)\rightarrow\ZZ$.
    \end{enumerate}

    Since $\Pcal^p(M)/\Pcal^{p-1}(M)$ is generated by products of Heaviside functions $h_{i_1}\ldots h_{i_p}$ and $\Pcal_p(M)/\Pcal_{p+1}(M)$ is generated by prefix chains $\gamma_{\Fcal,\vv,p}$, it suffices to verify the evaluation of $h_{i_1}\ldots h_{i_p}$ on $\gamma_{\Fcal,\vv,p}$ is equal to that of $(x^*)^{\{i_1,\ldots,i_p\}}$ on $\epsilon_{\Fcal,\vv,p}$. We first assume $i_j\in F_j\setminus F_{j-1}$ for every $j$ (up to reindexing). On the Varchenko--Gelfand side, if $T'$ is the unique tope in $U_{\Fcal,\vv,p}$ such that $T'(i_j)=+$ for every $j$, then the evaluation is equal to the sign of $T'$ in $\gamma_{\Fcal,T,p}$. This is $-1$ to the power of the number of $j$'s such that $T'(i_j)$ differ from $T(i_j)$, equivalently $T(i_1)\ldots T(i_p)$.
    The evaluation on the Cordovil side is also equal to $T(i_1)\ldots T(i_p)$.

    Suppose $\{i_1,\ldots,i_p\}\cap (F_j\setminus F_{j-1})=\emptyset$ for some $j\leq p$ instead. On the Varchenko--Gelfand side, we pair up the elements of $U_{\Fcal,\vv,p}$ into $\{\uu_i,\uu_i+\dd_j\}$'s, whose corresponding terms have opposite sign upon being evaluated by $h_{i_1}\ldots h_{i_p}$, thus the overall evaluation is zero. The evaluation is also zero on the Cordovil side by the simple observation about the monomials in the support of $\epsilon_{\Fcal,\vv,p}$.
\end{proof}

Using the above calculation on evaluating prefix chains by monomials of Heaviside functions, together with the same reasoning as Proposition~\ref{prop:alpha_dual_basis} and Corollary~\ref{coro:NBC_Hp_basis}, we have the following corollary.

\begin{corollary} \label{coro:NBC_Pp_basis}
    For every NBC-set $S$ of size $p$, extend the NBC-chain of $S$ arbitrarily into a complete flag $\tilde{\Fcal}_S$ and pick an $T_S\in\Tcal_{\tilde{\Fcal}_S}$.
    Then the collection of (the equivalent classes of) prefix chains $\gamma_{\tilde{\Fcal}_S,T_S,p}$ form a $\ZZ$-basis of $\Pcal_p(M)/\Pcal_{p+1}(M)$. In fact, it is the dual basis (up to negation) of $\big\{\prod_{i\in S} h_i: S\in\BC^p(\underline{M})\big\}\subset \Pcal^p(M)/\Pcal^{p-1}(M)$.
\end{corollary}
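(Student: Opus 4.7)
The plan is to leverage the evaluation computations from the proof of Proposition~\ref{prop:gamma_epsilon} together with Lemma~\ref{lem:unique_NBC} in order to exhibit the NBC-prefix chains as the dual basis (up to signs) of the NBC-products of Heaviside functions with respect to the perfect pairing $\Pcal^p(M)/\Pcal^{p-1}(M) \times \Pcal_p(M)/\Pcal_{p+1}(M) \to \ZZ$. Both assertions of the corollary will then be pure linear algebra consequences.

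First I would fix an ordering of $E$ and recall, as already used in the proof of Proposition~\ref{prop:gamma_epsilon}, that Moseley's isomorphism $\Acal^p(M) \cong \Pcal^p(M)/\Pcal^{p-1}(M)$ sends $(x^*)^S$ to $[\prod_{i\in S} h_i]$. Combined with the NBC $\ZZ$-basis of $\Acal^p(M)$, this shows that $\big\{[\prod_{i\in S} h_i] : S \in \BC^p(\underline{M})\big\}$ is itself a $\ZZ$-basis of $\Pcal^p(M)/\Pcal^{p-1}(M)$. In particular, $\Pcal_p(M)/\Pcal_{p+1}(M)$ has rank $b_p(\underline{M}) = |\BC^p(\underline{M})|$, so the cardinality of the candidate collection of prefix chains already matches the rank.

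The main step is to compute the pairing of $[\prod_{i\in S'} h_i]$ with $[\gamma_{\tilde{\Fcal}_S,T_S,p}]$ for every pair $S, S' \in \BC^p(\underline{M})$, where $S = \{i_1 > \cdots > i_p\}$ and $\tilde{\Fcal}_S$ extends the NBC-flag $F_k = \langle i_1,\ldots,i_k\rangle$. When $S' = S$, each $i_k$ lies in $F_k \setminus F_{k-1}$ by construction, so the first case of the evaluation computation in the proof of Proposition~\ref{prop:gamma_epsilon} applies and yields the value $\prod_{k=1}^p T_S(i_k) = \pm 1$. When $S' \neq S$, Lemma~\ref{lem:unique_NBC} provides some $k \leq p$ with $S' \cap (F_k \setminus F_{k-1}) = \emptyset$, and the second case of the same evaluation shows the pairing vanishes, by pairing up topes of $U_{\tilde{\Fcal}_S,T_S,p}$ into translates $\{\uu, \uu + \dd_k\}$ whose Heaviside values coincide while their coefficients in the prefix chain are opposite. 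Hence the pairing matrix between the two indexed families is $\pm 1$-diagonal, forcing the prefix chains to be the dual basis up to sign and, in particular, to form a $\ZZ$-basis of $\Pcal_p(M)/\Pcal_{p+1}(M)$.

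I do not anticipate a real obstacle here: both ingredients are already available (the case-by-case evaluation of Heaviside monomials against prefix chains from Proposition~\ref{prop:gamma_epsilon}, and the NBC-flag disjointness from Lemma~\ref{lem:unique_NBC}), and the structure of the argument mirrors the passage from Proposition~\ref{prop:alpha_dual_basis} to Corollary~\ref{coro:NBC_Hp_basis}, with $\Pcal^p(M)/\Pcal^{p-1}(M)$ replacing $H^p(\Sal_M;\ZZ)$ and Heaviside monomials replacing the cellular cocycles $c^S$.
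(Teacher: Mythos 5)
Your proposal is correct and follows essentially the same approach the paper intends: the paper gives only a one-sentence justification invoking the evaluation calculation from Proposition~\ref{prop:gamma_epsilon}, Lemma~\ref{lem:unique_NBC} (via the reasoning of Proposition~\ref{prop:alpha_dual_basis}), and the dual-basis argument of Corollary~\ref{coro:NBC_Hp_basis}, and you have reconstructed precisely that chain of reasoning, including the $\pm1$-diagonal pairing matrix and the rank count via Moseley's isomorphism $\Acal^p(M)\cong\Pcal^p(M)/\Pcal^{p-1}(M)$.
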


The simple description of $\tilde{a}_p$ goes beyond prefix chains.

\begin{proposition} \label{prop:affine_image}
    Let $\Fcal$ be a complete flag and $U=\vv+\langle \dd_{i_1},\ldots,\dd_{i_p}\rangle$ be an affine coordinate subspace of the tope space $\Tcal_\Fcal$. Then $$\tilde{a}_p(\gamma_{U,\Bcal,\vv})=\prod_{j=1}^p \big(\sum_{k\in F_{i_j}\setminus F_{i_j-1}} T_\vv(k) x_k\big).$$
\end{proposition}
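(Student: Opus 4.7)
The plan is to apply the characterization of $\tilde{a}_p$ implicit in the proof of Proposition~\ref{prop:gamma_epsilon}: combining Moseley's isomorphism $(x^*)^S \mapsto [\prod_{i \in S} h_i]$ with the perfect Cordovil pairing identifies $\tilde{a}_p(\gamma) \in \Acal_p(M) \subset {\rm SFSym}(E)$ as the unique homogeneous element of degree $p$ satisfying
$$(h_{j_1}\cdots h_{j_p})(\gamma) = \langle (x^*)^S, \tilde{a}_p(\gamma)\rangle$$
for every size-$p$ subset $S = \{j_1, \ldots, j_p\} \subset E$. Setting $\omega := \prod_{j=1}^p \big(\sum_{k \in F_{i_j}\setminus F_{i_j-1}} T_\vv(k) x_k\big)$, which is manifestly homogeneous of degree $p$ in ${\rm SFSym}(E)$, it thus suffices to verify $h_S(\gamma_{U,\Bcal,\vv}) = \langle (x^*)^S, \omega\rangle$ for every such $S$.

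The Heaviside side unpacks to $h_S(\gamma_{U,\Bcal,\vv}) = \sum_{\uu \in U,\ T_\uu|_S = +} (-1)^{w_{\Bcal,\vv}(\uu)}$, and I would split into two cases depending on whether $S$ is a transversal of the slabs $F_{i_1}\setminus F_{i_1-1}, \ldots, F_{i_p}\setminus F_{i_p-1}$. If $S$ misses some slab $F_{i_l}\setminus F_{i_l-1}$, the involution $\uu \mapsto \uu + \dd_{i_l}$ preserves the condition $T_\uu|_S = +$ (since none of the flipped coordinates lies in $S$) yet flips the parity of $w_{\Bcal,\vv}$, forcing $h_S(\gamma_{U,\Bcal,\vv}) = 0$; analogously $\langle (x^*)^S, \omega\rangle = 0$, as every monomial in the expansion of $\omega$ uses exactly one variable from each slab. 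If $S$ is a transversal with unique representative $s_j \in F_{i_j}\setminus F_{i_j-1}$, there is a unique $\uu_S = \vv + \sum_{j : T_\vv(s_j) = -} \dd_{i_j} \in U$ satisfying $T_{\uu_S}|_S = +$; this gives $h_S(\gamma_{U,\Bcal,\vv}) = (-1)^{|\{j : T_\vv(s_j) = -\}|} = \prod_j T_\vv(s_j)$, which exactly matches the coefficient of $x^S$ in $\omega$ that is picked out by choosing the term $T_\vv(s_j) x_{s_j}$ from the $j$-th factor.

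The main obstacle is sign-matching, particularly verifying $(-1)^{w_{\Bcal,\vv}(\uu_S)} = \prod_j T_\vv(s_j)$ using $T_\vv(k) \in \{\pm 1\}$, and confirming the uniqueness of $\uu_S$ (which follows because flipping the $j$-th slab via $\dd_{i_j}$ toggles $\uu_{s_j}$ while leaving all other $s_m$-coordinates fixed). Once the pairing identity is established for all $S$, the uniqueness characterization forces $\tilde{a}_p(\gamma_{U,\Bcal,\vv}) = \omega$, which as a byproduct also confirms that $\omega \in \Acal_p(M)$. Specialising to $\{i_1, \ldots, i_p\} = \{1, \ldots, p\}$ recovers Proposition~\ref{prop:gamma_epsilon}'s formula $\tilde{a}_p(\gamma_{\Fcal, \vv, p}) = \epsilon_{\Fcal, \vv, p}$, providing a useful sanity check.
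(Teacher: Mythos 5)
Your proof is correct and takes essentially the same approach as the paper, which also argues by comparing $h_S$-evaluations of $\gamma_{U,\Bcal,\vv}$ with $(x^*)^S$-evaluations of the right-hand side (the paper simply defers the calculation to its proof of Proposition~\ref{prop:gamma_epsilon}, whereas you carry it out in full). Your sign bookkeeping, the involution argument for the non-transversal case, and the unique-representative argument for the transversal case all match the paper's computation; the one implicit ingredient you lean on — that $\gamma_{U,\Bcal,\vv}$ already lies in $\Pcal_p(M)$ so that $\tilde{a}_p$ is defined on it — is Proposition~\ref{prop:prefix_to_coor} and is fine to take for granted.
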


\begin{proof}
    The argument is similar to that of Proposition~\ref{prop:gamma_epsilon}, namely, by comparing the evaluation of $\gamma_{U,\Bcal,\vv}$ by $h_{i'_1}\ldots h_{i'_p}$'s and the evaluation of the right hand side by $(x^*)^{\{i'_1,\ldots,i'_p\}}$'s. The calculation is also essentially the same as in the proof of Proposition \ref{prop:gamma_epsilon}. 
\end{proof}

The next lemma helps us to relate flags of flats of an initial matroid with flags of flats of the original matroid. It is used to define maps between the Cordovil algebras of initial oriented matroids to that of the original oriented matroid.

\begin{lemma} \label{lem:IM_flag}
    Let $\Fcal:\emptyset=F_0 \subsetneq  F_1 \subsetneq \ldots \subsetneq F_l=E$ be a flag of flats (not necessarily complete nor with $\rank F_i=i$). Let $G_0 \subsetneq  \ldots \subsetneq  G_d$ be a complete flag of the initial matroid $\underline{M}_\Fcal$. Then there exists a complete flag $G'_0 \subsetneq  \ldots \subsetneq G'_d$ of $\underline{M}$ such that $\{G_i\setminus G_{i-1}: 1\leq i\leq d\}=\{G'_i\setminus G'_{i-1}: 1\leq i\leq d\}$.
\end{lemma}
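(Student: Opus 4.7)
The plan is to exploit the direct sum structure of the initial matroid $\underline{M}_\Fcal=\bigoplus_{k=0}^{l-1} \underline{M}|_{F_{k+1}}/F_k$. The first step is to recall the standard fact that a flat of a direct sum is a disjoint union of flats of the summands, together with the bijection between flats of the quotient $M|_{F_{k+1}}/F_k$ and flats $A$ of $M$ satisfying $F_k\subseteq A\subseteq F_{k+1}$ (given by $A\mapsto A\setminus F_k$). Applying this to each $G_i$ in the given complete flag produces, for each $k$, a chain of flats of $M$
\[
F_k = A_k^{(0)} \subseteq A_k^{(1)} \subseteq \ldots \subseteq A_k^{(d)} = F_{k+1},
\]
so that $G_i = \bigsqcup_{k=0}^{l-1}(A_k^{(i)}\setminus F_k)$ for every $i$.

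The next step is to analyse the increments. Since $G_i$ covers $G_{i-1}$, exactly one of the chains above strictly increases at step $i$; let $k(i)$ denote that unique index. Then $G_i\setminus G_{i-1}=A_{k(i)}^{(i)}\setminus A_{k(i)}^{(i-1)}\subseteq F_{k(i)+1}\setminus F_{k(i)}$. This gives a partition of $\{1,\ldots,d\}$ into blocks $I_k=\{i:k(i)=k\}$; within each $I_k$, listing the indices in increasing order $i_{k,1}<\ldots<i_{k,r_k}$, the flats $A_k^{(i_{k,j})}$ form a strictly increasing chain in $M$ from $F_k$ to $F_{k+1}$ whose consecutive differences are precisely the increments $\{G_i\setminus G_{i-1}:i\in I_k\}$.

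I then define $G'_\bullet$ by concatenation: starting from $G'_0=\emptyset=F_0$, for $k=0,1,\ldots,l-1$ in order, insert the chain $A_k^{(i_{k,1})}<A_k^{(i_{k,2})}<\ldots<A_k^{(i_{k,r_k})}=F_{k+1}$. Each $A_k^{(i_{k,j})}$ is a genuine flat of $\underline{M}$ by the correspondence above, and by construction the consecutive differences match the multiset $\{G_i\setminus G_{i-1}\}$ exactly. Finally, since $\sum_k r_k=d$ and each step adds one element from a set that corresponds to a rank-one extension in the quotient $\underline{M}|_{F_{k+1}}/F_k$ (hence rank-one in $\underline{M}$), the resulting sequence is a complete flag of $\underline{M}$.

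The construction is essentially bookkeeping, so I expect no serious obstacle; the only mild subtlety is checking that rank genuinely increases by one at each inserted flat, which reduces to the standard fact that the rank function of $\underline{M}_\Fcal$ agrees with that of $\underline{M}$ on any set contained in some $F_{k+1}\setminus F_k$ after adjoining $F_k$, so that rank-one covers in $\underline{M}|_{F_{k+1}}/F_k$ lift to rank-one covers in $\underline{M}$.
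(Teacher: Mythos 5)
Your proof is correct and follows essentially the same route as the paper's: decompose each $G_i$ into pieces corresponding to the summands $\underline{M}|_{F_{k+1}}/F_k$, observe that a cover $G_{i-1}<G_i$ changes exactly one piece, and form $G'_\bullet$ by concatenating the per-block chains. The only cosmetic difference is that you list the distinct flats directly via the index sets $I_k$, whereas the paper writes out the full length-$l(d+1)$ chain (with $G_0^{(k)}\cup F_{k-1}=G_d^{(k-1)}\cup F_{k-2}$ at the seams) and notes that it has exactly $d+1$ distinct elements.
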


\begin{proof}
    Every flat $G$ of $\underline{M}_\Fcal$ can be written uniquely as $G^{(1)}\sqcup\ldots\sqcup G^{(l)}$, where $G^{(i)}\sqcup F_{i-1}\subset F_i$ is a flat of $\underline{M}$ for each $i$. So for any two consecutive flats $G_j\subsetneq G_{j+1}$, there must exist a unique index $1\leq k\leq l$ such that $G_j^{(t)}=G_{j+1}^{(t)}$ for any $t\neq k$ and $G_j^{(k)}\subsetneq G_{j+1}^{(k)}$ whose ranks differ by 1. Now the chain $G_0^{(1)}\leq G_1^{(1)}\leq\ldots\leq G_d^{(1)}\leq G_0^{(2)}\cup F_1\leq G_1^{(2)}\cup F_1\leq\ldots\leq G_d^{(2)}\cup F_1\leq \ldots\leq G_d^{(l)}\cup F_{l-1}$ has exactly $d+1$ distinct elements and form a complete flag of $\underline{M}$, which satisfies the requirement.
\end{proof}

 \begin{proposition}\label{prop:tope_filtration_incl}
For every flag $\Fcal$ there are canonical inclusion maps 
$\iota: \ZZ[\Tcal_{\Fcal}] \to  \ZZ[\Tcal(M)]$ and $\iota:\Pcal_p(M_\Fcal)\hookrightarrow \Pcal_p(M)$ for all $p$.
 \end{proposition}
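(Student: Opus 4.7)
The plan is to handle the two inclusions separately: the first is immediate from earlier identifications, and the second is best approached via the original annihilator definition of $\Pcal_p$ rather than the prefix-chain characterization.

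For the first map, I would invoke Proposition~\ref{prop:T_F_structure}, which identifies $\Tcal_\Fcal$ with the collection of topes of $M_\Fcal$. Since $\Tcal_\Fcal$ is by Definition~\ref{def:tope_flag} literally a subset of $\Tcal(M)$ inside $\{+,-\}^E$, the set inclusion $\Tcal_\Fcal \hookrightarrow \Tcal(M)$ extends $\ZZ$-linearly to an injection $\iota: \ZZ[\Tcal_\Fcal] \to \ZZ[\Tcal(M)]$.

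For the second inclusion, I would use the defining characterization of $\Pcal_p$ as the annihilator of $\Pcal^{p-1}$ under the evaluation pairing, where $\Pcal^{p-1}$ is spanned by products of at most $p-1$ Heaviside functions together with the unit. The key observation is a compatibility of Heaviside functions: since $M_\Fcal$ and $M$ share the ground set $E$, and each tope in $\Tcal_\Fcal$ is literally the same sign vector in $\{+,-\}^E$ whether regarded as a tope of $M$ or of $M_\Fcal$, the Heaviside function $h_e$ on $M$ restricts on $\Tcal_\Fcal$ to the corresponding Heaviside function on $M_\Fcal$. It follows that for every $\gamma \in \ZZ[\Tcal_\Fcal]$ and every monomial $h_{e_1} \cdots h_{e_k}$, the evaluation on $\iota\gamma$ inside $\ZZ[\Tcal(M)]$ agrees with the evaluation on $\gamma$ inside $\ZZ[\Tcal(M_\Fcal)]$. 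Thus if $\gamma \in \Pcal_p(M_\Fcal)$ is annihilated by all products of at most $p-1$ Heaviside functions over $M_\Fcal$, then $\iota\gamma$ is annihilated by all such products over $M$, and hence lies in $\Pcal_p(M)$. Injectivity of the restricted map is inherited from that of $\iota$.

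I do not anticipate a substantive obstacle in this approach; the argument is essentially a tautology once one notes that Heaviside functions depend only on the sign patterns of topes and not on the matroid structure. An alternative route through prefix chains, via Lemma~\ref{lem:IM_flag}, would work in principle but introduces complications because that lemma does not preserve the ordering of the flag increments, whereas prefix chains do depend on that ordering; the annihilator-based argument sidesteps this entirely.
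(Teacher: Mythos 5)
Your proposal is correct and follows essentially the same route as the paper: both proofs hinge on the observation that Heaviside functions of $M$ restrict on $\Tcal_\Fcal$ to Heaviside functions of $M_\Fcal$, and then pass through the annihilator/degree-filtration duality. The only difference is cosmetic: the paper phrases the argument as a two-way correspondence between $\Pcal^p(M)$ and $\Pcal^p(M_\Fcal)$ under restriction (establishing the slightly stronger fact $\Pcal_p(M_\Fcal)=\Pcal_p(M)\cap\ZZ[\Tcal_\Fcal]$), whereas you directly verify that elements of $\Pcal_p(M_\Fcal)$ are annihilated by degree-$(p-1)$ Heaviside monomials on $M$, which is exactly the one direction needed for the stated inclusion.
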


 \begin{proof}
By Definition \ref{def:tope_flag}, we have $\Tcal_{\Fcal} \subset \Tcal(M)$ which gives an immediate inclusion map for the tope spaces. 

Every function in $\ZZ[\Tcal]^*$ can be restricted as a function in $\ZZ[\Tcal_\Fcal]^*$, and the restriction of the Heaviside function $h_e\in\ZZ[\Tcal]^*$ is $h_e\in \ZZ[\Tcal_\Fcal]^*$. Hence, when the function is written as a polynomial in Heaviside functions, the same polynomial (in Heaviside functions over $\ZZ[\Tcal_\Fcal]$) represents its restriction in $\ZZ[\Tcal_\Fcal]^*$.
Conversely, every function in $\ZZ[\Tcal_\Fcal]^*$ can be written in a polynomial in Heaviside functions (over $\ZZ[\Tcal_\Fcal]$), in which the presentation defines a function in $\ZZ[\Tcal]^*$, whose restriction is the function we started with.

By choosing the presentation of the lowest possible degree in the above conversion, we have that $\Pcal^p(M_\Fcal)$ consists of precisely the restriction of the functions in $\Pcal^p(M)$. Now $\gamma$ is in $\Pcal_p(M_\Fcal)$ if and only if it is evaluated to 0 by every function in $\Pcal^{p-1}(M_\Fcal)$, so $\gamma$ as an element in $\ZZ[\Tcal(M)]$ also gets evaluated to 0 by every function in $\Pcal^{p-1}(M)$ thus is in $\Pcal_p(M)$.
\end{proof}
 
\begin{proposition} \label{prop:Cordovil_incl}
    There is a canonical inclusion map $\iota^{\Acal}:\Acal_p(M_\Fcal)\hookrightarrow \Acal_p(M)$ for every flag $\Fcal$.
\end{proposition}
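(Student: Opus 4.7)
The plan is to define $\iota^{\Acal}$ as the restriction of the identity on ${\rm SFSym}(E)$ to $\Acal_p(M_\Fcal)$, so that canonicity is automatic; the content is to verify that $\Acal_p(M_\Fcal) \subseteq \Acal_p(M)$ as subspaces of ${\rm SFSym}(E)$. By Proposition~\ref{prop:gamma_epsilon} together with Proposition~\ref{prop:prefixdualdegree} applied to $M_\Fcal$, the subspace $\Acal_p(M_\Fcal)$ is spanned by the elements $\epsilon_{\Fcal',\vv,p}$ as $\Fcal'$ ranges over complete flags of flats of $M_\Fcal$ and $\vv$ over $\Tcal_{\Fcal'}(M_\Fcal)$, so it suffices to show that each such generator lies in $\Acal_p(M)$.

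Given $(\Fcal',\vv)$, Lemma~\ref{lem:IM_flag} produces a complete flag $\tilde{\Fcal}'$ of $M$ refining $\Fcal$ whose block decomposition matches that of $\Fcal'$ via some injection $\sigma\colon\{1,\ldots,p\}\to\{1,\ldots,d\}$. Comparing product formulas gives
\[
\epsilon_{\Fcal',\vv,p} \;=\; \prod_{j=1}^p \Bigl(\sum_{k\in \tilde{F}'_{\sigma(j)}\setminus \tilde{F}'_{\sigma(j)-1}} T_\vv(k)\,x_k\Bigr),
\]
which by Proposition~\ref{prop:affine_image} equals $\tilde{a}_p(\gamma_{U,\Bcal,\vv})$ for the affine coordinate chain of $M$ with $U = \vv + \langle \dd_{\tilde{\Fcal}',\sigma(1)},\ldots,\dd_{\tilde{\Fcal}',\sigma(p)}\rangle$, provided that $\vv\in\Tcal_{\tilde{\Fcal}'}(M)$ so that $U$ is an affine coordinate subspace of $\Tcal_{\tilde{\Fcal}'}(M)$. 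Since $\gamma_{U,\Bcal,\vv}\in\Pcal_p(M)$ by Proposition~\ref{prop:prefix_to_coor}, this places $\epsilon_{\Fcal',\vv,p}$ in $\tilde{a}_p(\Pcal_p(M)) = \Acal_p(M)$, as required.

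The main obstacle is therefore to verify $\vv\in\Tcal_{\tilde{\Fcal}'}(M)$. By Proposition~\ref{prop:T_F_structure}, this reduces to the transitivity of initial matroids: $(M_\Fcal)_{\Fcal'} = M_{\tilde{\Fcal}'}$. Unpacking \eqref{def:IM} on both sides and decomposing each flat $F'_l$ of $M_\Fcal$ block-by-block as $F'_l = \bigsqcup_i G_i^{(l)}$ with $G_i^{(l)}\subseteq F_{i+1}\setminus F_i$ a flat of $M|_{F_{i+1}}/F_i$, each summand on either side takes the common form $(M|_{F_{i+1}}/F_i)|_{G_i^{(l+1)}}/G_i^{(l)}$; the identification uses the standard compatibility of restriction and contraction with direct sums together with $(M|_A)/B = (M/B)|_{A\setminus B}$ for $B\subseteq A$, and the structure of $\tilde{\Fcal}'$ recorded in Lemma~\ref{lem:IM_flag}. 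With transitivity in hand, $\Tcal_{\Fcal'}(M_\Fcal)=\Tcal_{\tilde{\Fcal}'}(M)$, in particular containing $\vv$, completing the proof of the inclusion. One further verifies, by tracking $\tilde{a}_p$ through the inclusion of Proposition~\ref{prop:tope_filtration_incl}, that $\iota^\Acal$ agrees with the map induced from the $\ZZ$-tope space inclusion, confirming the asserted canonicity.
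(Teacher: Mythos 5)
Your proof takes the same route as the paper: reduce to the generators $\epsilon_{\Gcal,\vv,p}$, use Lemma~\ref{lem:IM_flag} to find a complete flag of $\underline M$ with matching difference blocks, and then apply Proposition~\ref{prop:affine_image} to realise each generator as $\tilde a_p$ of an affine coordinate chain in $\Pcal_p(M)$. The one place where you go further is in explicitly verifying that $\vv\in\Tcal_{\tilde\Fcal'}(M)$ via the transitivity $(M_\Fcal)_{\Fcal'}=M_{\tilde\Fcal'}$; the paper asserts the required affine-coordinate-chain membership without spelling this step out, so your added check is a genuine and correct tightening of the same argument rather than a different approach.
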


\begin{proof}
Both $\Acal_p(M_{\Fcal})$ and $\Acal_p(M)$ live in ${\rm SFSym}(E)$ as submodules.
   The Cordovil algebra $\Acal_p(M_\Fcal)$ is generated by $\epsilon_{\Gcal,\vv,p}$'s where $\Gcal$ is a complete flag of $\underline{M}_\Fcal$. Such an element is the image of the prefix chain $\gamma_{\Gcal,\vv,p}\in\Pcal_p(M_\Fcal)$ under $\tilde{a}_p$. By Lemma~\ref{lem:IM_flag}, there exists a complete flag $\Gcal'$ of $\underline{M}$ such that $\gamma_{\Gcal,\vv,p}$ is an affine coordinate chain in $\Pcal_p(M)$ with respect to it. By Proposition~\ref{prop:affine_image}, the image of such an affine chain under $\tilde{a}_p$, which is necessarily in $\Acal_p(M)$, is equal to $\epsilon_{\Gcal,\vv,p}$ (the support of the factors are the same $G_i\setminus G_{i-1}$'s, and their sign patterns are inherited from the same $T_\vv$).
\end{proof}
\subsection{Proof of Theorem~\ref{mainthm:Z_cosheaf}} \label{sec:ThmC}

We begin by defining the cosheaves on the fan of a matroid that are involved in Theorem~\ref{mainthm:Z_cosheaf}. 
Recall that the fan of a matroid $\Sigma_{\underline{M}}$ has cones corresponding to the flags of flats of the matroid, hence to its initial matroids (Definition \ref{def:matfan}). 
For flags of flats $\mathcal{F}, \mathcal{F}'$, there is an inclusion of cones $\sigma_{\mathcal{F}} \subset \sigma_{\mathcal{F}} $ if and only if $\mathcal{F}' \subset \mathcal{F}$. 

Recall that we view $\Sigma_{\underline{M}}$ as a category with objects corresponding to the  faces of the fan  and morphisms corresponding to inclusions of faces. From Definition \ref{def:sheaf}, a cosheaf on $\Sigma_{\underline{M}}$ is a  functor 
$\mathfrak{G}: \Sigma_{\underline{M}}^{\text{op}} \to  {\rm Mod}_{\ZZ}$, where $ {\rm Mod}_{\ZZ}$ is the category of $\ZZ$-modules and $\Sigma_{\underline{M}}^{\text{op}}$  has morphisms in the opposite direction from $\Sigma_{\underline{M}}.$ 

We construct a collection of cosheaves on $\Sigma_{\underline{M}}$ by showing that the dual Varchenko--Gelfand filtration of the $\ZZ$-tope space of $M$, as well as Cordovil algebras, are functorial with respect to initial matroids. We begin with defining a $\ZZ$-variant of the sign cosheaf from \cite{RS}. 

\begin{definition}\label{def:signcosheaf}
The $\ZZ$-sign cosheaf $\Scal$ of an oriented matroid is defined by the assignment 
$\Scal(\sigma_{\mathcal{F}}) = \ZZ[\Tcal(M_\Fcal)]$ for each cone $\sigma_{\Fcal}$ of $\Sigma_{\underline{M}}$ and inclusion maps $\Scal(\sigma_{\mathcal{F}}) \to \Scal(\sigma_{\mathcal{F}'})$ for each pair of flags $\mathcal{F}', \mathcal{F}$ with $\mathcal{F}' \leq \mathcal{F}$.

The $\Ztwo$-variant simply assigns the corresponding $\Ztwo$-tope space to each face again with morphisms being inclusion maps. 
\end{definition}

That there are inclusion maps of the tope spaces of $M_\mathcal{F}$ and $M_{\mathcal{F}'}$ when $\sigma_{\mathcal{F}'} \subset \sigma_{\mathcal{F}}$ follows from Proposition \ref{prop:tope_filtration_incl}.
 
 The $\Ztwo$-sign cosheaf was filtered using Quillen filtration from Section \ref{sec:Quillen} in \cite{RS, RRS}.
Using the dual Varchenko--Gelfand filtration, we can extend this filtration to the integral sign cosheaf.

\begin{proposition}\label{prop:GVcosheaf}
    The map $\Pfrak_p$ that takes $\sigma_\Fcal$ to $\Pcal_p(M_\Fcal)$ and containment of cones $\sigma_{\mathcal{F}'} \subseteq \sigma_{\mathcal{F}}$ to inclusion of $\ZZ$-modules $\Pcal_p(M_\Fcal)\rightarrow \Pcal_p(M_{\Fcal'})$ provided by Proposition \ref{prop:tope_filtration_incl} is a cosheaf on the fan of a matroid.
\end{proposition}

\begin{proof}
    It must be shown that the induced inclusion maps between the $\ZZ$-tope spaces for $\Fcal \leq \Fcal'$ respect compositions. This holds since if there are two flags satisfying $\Fcal \leq \Fcal_1,\Fcal_2 \leq \Fcal'$ then the following diagram commutes: 

   \begin{equation} \label{eq:VGSheaf}
\begin{CD}
\Pcal_p(M_{\Fcal})     @>\iota>>  \Pcal_p(M_{\Fcal_1}) \\
@V\iota VV        @V\iota VV\\
\Pcal_p(M_{\Fcal_2})    @>\iota>> \Pcal_p(M_{\Fcal'}),
\end{CD}
\end{equation}
and $\mathfrak{P}_p$ is a functor. 
\end{proof}

\begin{corollary}\label{cor:cosheafFiltration}
The dual Varchenko--Gelfand filtration provides a filtration of the integral sign cosheaf of an oriented matroid. Namely, there are inclusions of cosheaves, 
$$\Pfrak_d  \subset  \dots \subset \Pfrak_p \subset \Pfrak_{p-1} \subset \dots \subset \Pfrak_1 \subset \mathcal{S},$$ where the inclusions denote the existence of injective cosheaf maps. 
\end{corollary}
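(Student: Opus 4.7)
The approach is to define the cosheaf morphism $\Pfrak_{p+1} \hookrightarrow \Pfrak_p$ at each cone via the Varchenko--Gelfand filtration inclusion, then verify naturality against the cosheaf structure maps. Injectivity of the morphism at each cone is automatic from the definition, and injectivity of a cosheaf map is checked stalkwise.

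First I would verify that each $\Pfrak_p$ is itself a cosheaf. The assignment $\sigma_\Fcal \mapsto \Pcal_p(M_\Fcal)$ is the one given in the definition preceding the corollary. For a cone inclusion $\sigma_\Fcal \subset \sigma_{\Fcal'}$, which corresponds to a refinement of flags $\Fcal \subset \Fcal'$, Proposition~\ref{prop:Z-functorial} provides an inclusion $\iota : \Pcal_p(M_{\Fcal'}) \hookrightarrow \Pcal_p(M_\Fcal)$; I take this as the cosheaf structure map. Because these maps are concretely restrictions of inclusions of subgroups of the common ambient tope space $\ZZ[\Tcal(M)]$, compositions of structure maps are again inclusions, so functoriality on $\Sigma_{\underline{M}}^{\mathrm{op}}$ is immediate.

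For each $p$, I would define the candidate cosheaf map $\Pfrak_{p+1} \to \Pfrak_p$ stalkwise as the Varchenko--Gelfand filtration inclusion $\Pcal_{p+1}(M_\Fcal) \subset \Pcal_p(M_\Fcal)$. Naturality amounts to checking that for every refinement $\Fcal \subset \Fcal'$, the square
\[
\begin{CD}
\Pcal_{p+1}(M_{\Fcal'}) @>>> \Pcal_p(M_{\Fcal'}) \\
@V\iota VV @V\iota VV \\
\Pcal_{p+1}(M_\Fcal) @>>> \Pcal_p(M_\Fcal)
\end{CD}
\]
commutes, where the horizontal maps are filtration inclusions and the vertical maps are the cosheaf structure maps. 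Since all four maps are inclusions of subgroups of $\ZZ[\Tcal(M)]$, both compositions agree with the set-theoretic inclusion $\Pcal_{p+1}(M_{\Fcal'}) \hookrightarrow \Pcal_p(M_\Fcal)$, so the square commutes. The same argument, applied with $\Pfrak_0 = \Scal$, yields the top inclusion $\Pfrak_1 \hookrightarrow \Scal$.

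I do not foresee any substantial obstacles. The genuine content lies in Proposition~\ref{prop:Z-functorial} (and, for the analogous statement with the Cordovil algebra, in Lemma~\ref{lem:CD_for_MF}), which guarantees that the cosheaf structure maps respect the Varchenko--Gelfand filtration. Once that is in hand, the proof reduces to the observation that all maps are identities on elements of $\ZZ[\Tcal(M)]$, so every diagram that needs to commute commutes trivially.
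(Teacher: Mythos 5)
Your proposal is correct and follows essentially the same route as the paper: each $\Pfrak_p$ is checked to be a cosheaf by noting that all structure maps are inclusions of subgroups of the common ambient $\ZZ[\Tcal(M)]$ (the paper verifies this via the commuting square in its equation \eqref{eq:VGSheaf}, exactly as you do), and the filtration maps $\Pfrak_{p+1}\hookrightarrow\Pfrak_p$ are automatically natural for the same reason. The one refinement worth noting is that your explicit naturality square for $\Pfrak_{p+1}\to\Pfrak_p$ is left implicit in the paper's proof, so your write-up is, if anything, slightly more complete.
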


\begin{proposition}\label{prop:CordovilCosheaf}
    The map $\mathfrak{A}_p$ that takes $\sigma_\Fcal$ to $\Acal_p(M_\Fcal)$ and containment of cones $\sigma_{\mathcal{F}'} \subseteq \sigma_{\mathcal{F}}$ to inclusion of $\ZZ$-modules $\Acal_p(M_\Fcal)\rightarrow \Acal_p(M_{\Fcal'})$ provided by Proposition \ref{prop:Cordovil_incl} is a cosheaf on the fan of a matroid.
\end{proposition}

\begin{proof}
    Similar to the case of $\Pfrak_p$, since inclusion maps commute, for flags $\Fcal \leq \Fcal_1,\Fcal_2 \leq \Fcal'$ we have the commutative diagram:
   \begin{equation} \label{eq:CordSheaf}
    \begin{CD}
        \Acal_p(M_{\Fcal})     @>\iota^\Acal>>  \Acal_p(M_{\Fcal_1}) \\
        @V\iota^\Acal VV        @V\iota^\Acal VV\\
        \Acal_p(M_{\Fcal_2})    @>\iota^\Acal>> \Acal_p(M_{\Fcal'}).
    \end{CD}
    \end{equation}
    This completes the proof. 
\end{proof}

We call $\mathfrak{A}_p$ the {\em $p$-th Cordovil cosheaf}.
Using the previous sections, which identified the intermediate quotients of dual Varchenko--Gelfand filtration with the dual of the Cordovil algebra, we can find short exact sequences of cosheaves on $\Sigma_{\underline{M}}$ involving the cosheaves of the filtrations and the Cordovil cosheaves. We do so by first establishing the next lemma.

\begin{lemma} \label{lem:CD_for_MF}
Let $\Fcal'$ and $\Fcal$ be flags of flats of $\underline{M}$ with $\Fcal' \leq \Fcal$.
We have the following commutative diagram: 
\begin{equation} \label{eq:SES}
\begin{CD}
0 @> >> \Pcal_{p+1}(M_{{\Fcal}})     @>\iota >> \Pcal_p(M_{{\Fcal}})     @>\tilde{a}_p>> \Acal_p({M_{\Fcal}}) @> >> 0 \\
@.  @V\iota VV  @V\iota VV        @V\iota^{\Acal} VV\\
 0 @> >>  \Pcal_{p+1}({M_{{\Fcal'}}})     @>\iota >> \Pcal_p(M_{{\Fcal'}})     @>\tilde{a}_p>>   \Acal_p(M_{{\Fcal'}})@> >> 0 
\end{CD},
\end{equation}
where $\iota$'s are inclusions within the tope space $\ZZ[\Tcal]$, $\iota^{\Acal}$ is an inclusion within ${\rm SFSym}(E)$, and $\tilde{a}_p$'s are the respective $\tilde{a}_p$ maps for the two oriented matroids.
\end{lemma}

\begin{proof}
The horizontal inclusion maps $\iota:\Pcal_{p+1}(M)\rightarrow \Pcal_p(M)$ come from the respective filtrations. By Propositions \ref{prop:tope_filtration_incl} and \ref{prop:Cordovil_incl}, the vertical arrows are all inclusion maps. 
Therefore, the leftmost square consisting of inclusion maps commutes trivially.

Since $\Pcal_p(M_\Fcal)$ is generated by prefix chains $\gamma_{\Gcal,\vv,p}$'s of $M_\Fcal$, it suffices to show the rightmost square commutes for these chains. But this is essentially the proof of Proposition~\ref{prop:Cordovil_incl}: the image of $\gamma_{\Gcal,\vv,p}$ under the $\tilde{a}_p$ map of $M_\Fcal$ is $\epsilon_{\Gcal,\vv,p}$, which is sent to the same element in $\Acal_p(M)$ via $\iota^\Acal$, such an element is also the image of the $\gamma_{\Gcal,\vv,p}$, viewed as an affine coordinate chain of $\Pcal_p(M)$, under the $\tilde{a}_p$ map of $M$.
\end{proof}

\begin{corollary} \label{coro:SES_cosheaves}
For an oriented matroid $M$ there is an exact sequence of cosheaves 
$$0 \to \Pfrak_{p+1} \to \Pfrak_p \to \mathfrak{A}_p \to 0$$
for every $p$. 
\end{corollary}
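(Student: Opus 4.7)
The plan is to assemble the stalkwise short exact sequence from Proposition~\ref{prop:gamma_epsilon} into a short exact sequence of cosheaves by verifying that both $\iota$ and $\tilde{a}_p$ are natural transformations with respect to the cosheaf restriction maps.

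First I would recall that for each cone $\sigma_\Fcal$, Proposition~\ref{prop:gamma_epsilon} applied to the oriented matroid $M_\Fcal$ yields a short exact sequence
$$0 \to \Pcal_{p+1}(M_\Fcal) \xrightarrow{\iota} \Pcal_p(M_\Fcal) \xrightarrow{\tilde{a}_p} \Acal_p(M_\Fcal) \to 0.$$
This already gives exactness at each stalk of the sequence of cosheaves we are after.

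Next I would verify naturality with respect to the cone restriction maps. For an inclusion of cones $\sigma_\Fcal \subset \sigma_{\Fcal'}$ (equivalently $\Fcal \subset \Fcal'$), using the identification $(M_\Fcal)_{\Fcal'} = M_{\Fcal'}$, I would apply Lemma~\ref{lem:CD_for_MF} with $M$ replaced by $M_\Fcal$ and the flag of the lemma replaced by $\Fcal'$. This produces a commutative ladder of short exact sequences whose vertical arrows are exactly the restriction maps of the cosheaves $\Pfrak_{p+1}$, $\Pfrak_p$, and $\mathfrak{A}_p$ from the proofs of Corollary~\ref{cor:cosheafFiltration} and the proposition on Cordovil cosheaves. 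Combining these ladders over all pairs $\sigma_\Fcal \subset \sigma_{\Fcal'}$ promotes $\iota$ and $\tilde{a}_p$ to natural transformations of cosheaves.

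Once both observations are in hand, the corollary follows immediately, since exactness of a sequence of cosheaves on a polyhedral fan is checked stalkwise and the stalkwise exactness has already been established. I do not anticipate any real obstacle here: the corollary is essentially a packaging statement, and all the substantive work---the construction of $\tilde{a}_p$, the identification of its kernel with $\Pcal_{p+1}$, and its functoriality with respect to initial matroids---has already been carried out earlier in the section.
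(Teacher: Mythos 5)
Your proposal is correct and matches the paper's intent: the paper leaves this corollary without an explicit proof because Lemma~\ref{lem:CD_for_MF} supplies precisely the commutative ladder you invoke, and your observation that it applies between any two nested flags---by substituting $M_\Fcal$ for $M$, viewing $\Fcal'$ as a flag of flats of $M_\Fcal$, and using $(M_\Fcal)_{\Fcal'}=M_{\Fcal'}$---is the same transitivity implicitly used to make $\Pfrak_p$ and $\mathfrak{A}_p$ into cosheaves in Corollary~\ref{cor:cosheafFiltration} and the surrounding proposition. Stalkwise exactness from Proposition~\ref{prop:gamma_epsilon} plus naturality is exactly what is needed, since exactness of a sequence of cosheaves on a fan (functors on a finite poset) is an objectwise condition.
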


\begin{proof}
   The short exact sequence of cosheaves follows from Lemma \ref{lem:CD_for_MF} and Propositions \ref{prop:GVcosheaf} 
and \ref{prop:CordovilCosheaf}. 
\end{proof}

\begin{proof}[Proof of Theorem \ref{mainthm:Z_cosheaf}]
The proof now follows from Corollaries \ref{cor:cosheafFiltration} and \ref{coro:SES_cosheaves}. 
\end{proof}

\begin{remark} \rm
    We mention some connection between our construction and operad theory as communicated to us by Basile Coron. In the case of the braid arrangement $\{\{x_i=x_j\}:1\leq i\neq j\leq n\}$, the $\ZZ$-tope space can be viewed as the group algebra $\ZZ[\mathfrak{S}_n]$ of the symmetric group, and the {\em associative operad} that relates symmetric groups of different sizes is encoded by the cosheaf structure of $\mathcal{S}$. Under such an identification, prefix chains correspond to (composition of) Poisson brackets, and the cosheaf structure of the Cordovil cosheaves encode the {\em Poisson operad}. Such a connection can be extended to more general (oriented) matroids. We refer the reader to \cite{Coron2025} and the references therein for details on poset operad theory.
\end{remark}

In the original short exact sequence over $\Ztwo$ in \cite{RS}, the Cordovil cosheaf is replaced with  the mod 2 reduction of the {\em $\ZZ$-tropical homology cosheaf} on $\Sigma_{\underline{M}}$ (also known as the $p$-th multi-tangent space) \cite{IKMZ}, which  assigns $\OS_p(\underline{M}_{\Fcal}; \Ztwo)$ to $\sigma_{\Fcal}$ and with cosheaf maps the inclusion maps $\iota: \OS_p(\underline{M}_{\Fcal}; \Ztwo) \to \OS_p(\underline{M}_{\Fcal'}; \Ztwo)$. The mod 2 reduction of the $\mathcal{A}_p$ is the same as the mod 2 reduction of $\OS_p$, and thus the mod 2 reductions of these cosheaves are equal. 

The next example shows that a straight up variant of Corollary \ref{coro:SES_cosheaves} with $\mathfrak{A}_p$ replaced with the cosheaf assigning $\OS_p(\underline{M}_{\Fcal}; \ZZ)$ to all faces and inclusion maps is impossible. More precisely, it is not possible to lift the original short exact sequence in \cite{RS} to $\ZZ$-coefficients in a way that replaces $\Acal_p$ with $\OS_p(\  \cdot \ ; \ZZ)$ in the commutative diagram (\ref{eq:SES})
while maintaining that all of the cosheaf maps as inclusion maps.

\begin{example}
\rm
Consider the oriented matroid $M$ in Figure~\ref{diag:U23}, and its initial matroids corresponding to the three flags $\Fcal_i:\emptyset \subsetneq \{L_i\} \subsetneq  E$. We show that there does not exist a compatible system of $\abv_1^{(i)}:\Pcal_1(M_{\Fcal_i};\ZZ)\rightarrow\OS_1(\underline{M}_{\Fcal_i};\ZZ)$ such that (1) the maps lift the original $\qbv_1$ maps in \cite{RS}, (2) the commutative diagram (\ref{eq:SES}) holds for every $\Fcal_i$ including the vertical inclusion maps (with $\Acal$'s replaced by $\OS$'s in the obvious way). In particular, since $\OS_1(\underline{M}_{\Fcal_i})$ is generated by $\ee_i$ and $\ee_{E\setminus\{i\}}$ by Proposition~\ref{prop:OSZhar}, each prefix chain of $M_{\Fcal_i}$ is sent to $c\ee_i+c'\ee_{E\setminus\{i\}}$ where the parity of $c,c'\in\ZZ$ depends on whether the prefix chain is with respect to $\emptyset\subsetneq \{L_i\}$ or $\emptyset \subsetneq E\setminus\{L_i\}$. In the calculation below, $\alpha$'s stand for odd integers whereas $\beta$'s are even.

Suppose $\abv_1^{(1)}$ sends $T_1-T_2$ to $\alpha\ee_1+\beta\ee_2+\beta\ee_3$ and $\abv_1^{(2)}$ send $T_2-T_3$ to $\beta'\ee_1+\alpha'\ee_2+\beta'\ee_3$. Then $\abv_1^{(3)}$ sends $T_1-T_3=(T_1-T_2)+(T_2-T_3)$ to $(\alpha+\beta')\ee_1+(\alpha'+\beta)\ee_2+(\beta+\beta')\ee_3$. Now $\abv_1^{(3)}$ must send $T_3-T_4$ to $(\alpha''+\beta'-\alpha')\ee_1+(\alpha''+\beta'-\alpha')\ee_2+\alpha''\ee_3$ for some $\alpha''$ just so $\abv_1^{(1)}$ can send $T_2-T_4=(T_2-T_3)+(T_3-T_4)$ to $(\alpha''+2\beta'-\alpha')\ee_1+(\alpha''+\beta')\ee_2+(\alpha''+\beta)\ee_3$ (here we use the constraint that $\ee_2,\ee_3$ must have the same coefficients). Next, since $(T_1-T_2)+(T_4-T_5)$ is in $\Pcal_2(M_{\Fcal_1})$, $\abv_1^{(1)}$ must send it to $0$, and $\abv_1^{(1)}(T_4-T_5)=-\alpha\ee_1-\beta\ee_2-\beta\ee_3$. So $\abv_1^{(2)}$ sends $T_3-T_5=(T_3-T_4)+(T_4-T_5)$ to $(\alpha''+\beta'-\alpha-\alpha')\ee_1+(\alpha''+\beta'-\alpha'-\beta)\ee_2+(\alpha''-\beta)\ee_3$.

Now comparing the coefficients of $\ee_1,\ee_2$ in the image of $T_1-T_3$ yields $\beta-\beta'=\alpha-\alpha''$, whereas comparing the coefficients of $\ee_1,\ee_3$ in the image of $T_3 - T_5$ yields $\beta+\beta'=\alpha+\alpha'$, these imply $\alpha=\beta,\alpha'=\beta'$, a contradiction as they should have different parity.
\end{example}

\subsection{Projectivisation of the Varchenko--Gelfand Filtration}

In \cite{RRS2, RS}, the authors consider not just the $\Ztwo$-tope space of an oriented matroid but also the {\em projective $\Ztwo$-tope space}. The intermediate gradeds of the filtration of the projective $\Ztwo$-tope space are shown to be isomorphic to the stalks of the cosheaves from $\Ztwo$-tropical homology $\mathcal{F}_p(M)$. 
By \cite{Zhar_OS}, this vector space is also isomorphic to the dual of a graded piece of the {\em projective Orlik--Solomon algebra} \cite{OS,Kawahara}, which we denote by $\overline{\OS}_p(\underline{M}; \Ztwo)$.

Here we consider a $\ZZ$-variant of this projectivisation of the tope space and also the dual Cordovil algebra. Consider the collection $\Tcal^\pj$ of projective topes, obtained by identifying antipodal topes in $\Tcal$. We have the canonical projection $\pi:\ZZ[\Tcal]\rightarrow\ZZ[\Tcal^\pj]$; denote by $\Theta$ the kernel of this map. 
We show that the dual Varchenko--Gelfand filtration induces a filtration on $\ZZ[\Tcal^\pj]$. In what follows we shorten 
$\Pcal_p(M)$ and $\Acal_p(M)$ to $\Pcal_p$ and $\Acal_p$, respectively, since we only consider one oriented matroid $M$. 

Consider the following diagram

$$
\begin{CD}
@. 0 @. 0 @. 0\\
@. @VVV @VVV @VVV\\
0@>>>\Theta\cap\Pcal_{p+1}@>>>\Pcal_{p+1}@>>>\Pcal_{p+1}^\pj@>>>0\\
@. @VVV @VVV @VVV\\
0@>>>\Theta\cap\Pcal_p@>>>\Pcal_p @>>>\Pcal_p^\pj@>>>0\\
@. @VV{\tilde{a}_p|_\Theta} V @VV{\tilde{a}_p}V @VV{\tilde{a}_p^\pj}V\\
0@>>> B_p @>>>\Acal_p@>>>\Acal_p^\pj@>>>0\\
@. @VVV @VVV @VVV\\
@. 0 @. 0 @. 0.\\
\end{CD}
$$
Here $\Pcal_{p+1}^\pj=\pi(\Pcal_{p+1}), \Pcal_p^\pj=\pi(\Pcal_p), B_p:=\tilde{a}_p(\Theta\cap\Pcal_p)$, and $\Acal_p^\pj:=\Acal_p/B_p$ ({\em a priori} not $\Pcal_p^\pj/\Pcal_{p+1}^\pj$).

\begin{proposition}
    The diagram is well-defined (the map $\tilde{a}_p^\pj$ exists), and it is commutative and exact.
\end{proposition}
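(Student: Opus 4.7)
The plan is to treat the middle column, which is the short exact sequence $0 \to \Pcal_{p+1} \to \Pcal_p \xrightarrow{\tilde{a}_p} \Acal_p \to 0$ supplied by Lemma~\ref{lem:CD_for_MF}, as the backbone, and deduce everything else by chasing through the projection $\pi \colon \ZZ[\Tcal] \to \ZZ[\Tcal^\pj]$. First I would observe that $\Pcal_{p+1}^\pj = \Pcal_{p+1}/(\Theta \cap \Pcal_{p+1})$ and $\Pcal_p^\pj = \Pcal_p/(\Theta \cap \Pcal_p)$ by the first isomorphism theorem, which makes the top two rows tautologically short exact, while the bottom row is the defining sequence for $\Acal_p^\pj = \Acal_p/B_p$. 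To construct $\tilde{a}_p^\pj$, note that by the definition $B_p = \tilde{a}_p(\Theta \cap \Pcal_p)$, so $\tilde{a}_p$ sends $\Theta \cap \Pcal_p$ into $B_p$ and therefore descends to a well-defined map $\Pcal_p^\pj \to \Acal_p/B_p = \Acal_p^\pj$, surjective because $\tilde{a}_p$ is.

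Next I would verify exactness of the third column. The induced map $\Pcal_{p+1}^\pj \to \Pcal_p^\pj$ is injective because $\Theta \cap \Pcal_{p+1} = \Pcal_{p+1} \cap (\Theta \cap \Pcal_p)$, which is just the statement that $\Pcal_{p+1}\subset \Pcal_p$. For the kernel of $\tilde{a}_p^\pj$: a class $[\gamma] \in \Pcal_p^\pj$ lies in $\ker \tilde{a}_p^\pj$ iff $\tilde{a}_p(\gamma) \in B_p$, iff $\gamma - \gamma' \in \ker \tilde{a}_p = \Pcal_{p+1}$ for some $\gamma' \in \Theta \cap \Pcal_p$, iff $[\gamma]$ comes from $\Pcal_{p+1}^\pj$. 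The first column is even simpler: exactness at $\Theta \cap \Pcal_p$ is $(\Theta \cap \Pcal_p) \cap \Pcal_{p+1} = \Theta \cap \Pcal_{p+1}$, and surjectivity onto $B_p$ is by definition of $B_p$.

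Finally, the four squares commute for formal reasons: the two top squares involve only inclusions and the projection $\pi$; the bottom-left square commutes because both composites equal $\tilde{a}_p$ restricted to $\Theta \cap \Pcal_p$ and regarded as landing in $\Acal_p$; the bottom-right square commutes by construction of $\tilde{a}_p^\pj$ as the descent of $\tilde{a}_p$. I do not expect a genuine obstacle here, as the statement is essentially the $3 \times 3$ lemma once $B_p$ and $\Acal_p^\pj$ have been defined as in the statement; the only non-formal step is the kernel computation for $\tilde{a}_p^\pj$ above, which uses exactness of the middle column provided by Lemma~\ref{lem:CD_for_MF}.
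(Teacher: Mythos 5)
Your proof is correct, and it follows essentially the same skeleton as the paper's: establish exactness of the rows (first isomorphism theorem / definition), exactness of the first two columns (direct check and Lemma~\ref{lem:CD_for_MF} respectively), and commutativity of the squares, then conclude. The one genuine difference in execution is that the paper invokes the snake lemma to produce $\tilde{a}_p^\pj$ together with commutativity of the lower-right square, and then the nine lemma to get exactness of the third column; you instead construct $\tilde{a}_p^\pj$ by hand from the containment $\tilde{a}_p(\Theta\cap\Pcal_p)\subseteq B_p$ (which is the definition of $B_p$) and verify the third-column exactness explicitly via the equivalence $\tilde{a}_p(\gamma)\in B_p \iff \gamma-\gamma'\in\Pcal_{p+1}$ for some $\gamma'\in\Theta\cap\Pcal_p \iff [\gamma]\in\Pcal_{p+1}^\pj$. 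That unpacking is exactly what the nine lemma would give you, so the two are logically equivalent; your version is a bit more self-contained and makes the mechanism visible, while the paper's is shorter by relying on the standard homological-algebra toolkit. No gaps.
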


\begin{proof}
The top-left square commutes since every map is an inclusion map. The commutativity of the top-right square is because the two horizontal maps are both restrictions of $\pi$ whereas the two vertical maps are inclusions; analogous reasoning for the lower-left square.
The existence of $\tilde{a}_p^\pj$ and the commutativity of the lower-right square follow from the snake lemma applied to the first two columns once we have proved the exactness of the first two columns.

The exactness of the first two rows follows the first isomorphism theorem, and that of the third row from definition. The middle column is Lemma~\ref{lem:CD_for_MF}. 
Next we prove the exactness of the first column, i.e., the kernel of $\tilde{a}_p$ restricted to $\Theta\cap\Pcal_p$ is $\Theta\cap\Pcal_{p+1}$. The L.H.S. is contained in $\Theta$ by definition, as well as in the kernel of $\tilde{a}_p$ restricted to $\Pcal_p$, which is $\Pcal_{p+1}$; conversely, $\Theta\cap\Pcal_{p+1}\subset\Theta\cap\Pcal_p$, while $\tilde{a}_p(\Theta\cap\Pcal_{p+1})\subset \tilde{a}_p(\Pcal_{p+1})=\{0\}$. The exactness of the last column follows from the nine lemma.
\end{proof}

\begin{theorem}
    When $p$ is even, $\Acal_p^\pj\cong \Acal_p$. When $p$ is odd, $\Acal_p^\pj\cong\overline{\OS}_p(\underline{M}; \Ztwo)$. 
\end{theorem}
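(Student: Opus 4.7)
The plan hinges on the identity
\[\tilde{a}_p \circ c = (-1)^p\, \tilde{a}_p \quad \text{on } \Pcal_p,\]
where $c$ is the involution $T \mapsto -T$ extended linearly. I will verify this on prefix chains: using $-T_\uu = T_{\uu + \ff_d}$ and reindexing the sum one checks that $c(\gamma_{\Fcal,\vv,p}) = \gamma_{\Fcal,\vv+\ff_d,p}$; then substituting $T_{\vv+\ff_d}(j) = -T_\vv(j)$ into the product formula for $\epsilon_{\Fcal,\vv,p}$ pulls out a factor of $(-1)^p$. Proposition~\ref{prop:gamma_epsilon} extends the identity from prefix chains to all of $\Pcal_p$ by linearity.

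Given this identity, the even case is immediate: for $x \in \Theta \cap \Pcal_p$, the antisymmetry $c(x) = -x$ gives $\tilde{a}_p(x) = -\tilde{a}_p(x)$, so $2\tilde{a}_p(x) = 0$, and torsion-freeness of $\Acal_p$ forces $B_p = 0$ and $\Acal_p^\pj = \Acal_p$.

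For the odd case, I first observe that $\tilde{y}-c(\tilde{y}) \in \Theta\cap\Pcal_p$ maps to $2\tilde{a}_p(\tilde{y})$, so $2\Acal_p\subseteq B_p$ and $\Acal_p^\pj$ becomes a $\Ztwo$-vector space. To identify it, I will compare with the mod-$2$ theory via the natural reduction map
\[\phi \colon \Acal_p^\pj = \Pcal_p^\pj/\Pcal_{p+1}^\pj \longrightarrow \overline{\Pcal}_p^\pj/\overline{\Pcal}_{p+1}^\pj,\]
where $\overline{\Pcal}_k^\pj$ denotes the image of $\Pcal_k^\pj$ in $(\Ztwo)[\Tcal^\pj]$. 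By Theorem~\ref{mainthm:Z2} we have $\overline{\Pcal}_k = \Qcal_k$, and the mod-$2$ projective Quillen analysis in \cite{RS,RRS2} identifies $\overline{\Pcal}_p^\pj/\overline{\Pcal}_{p+1}^\pj$ with $\overline{\OS}_p(\underline{M};\Ztwo)$, so it is enough to show $\phi$ is an isomorphism.

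Surjectivity of $\phi$ is formal. Injectivity reduces to the saturation statement
\[\Pcal_p^\pj \cap 2\ZZ[\Tcal^\pj] \subseteq \Pcal_{p+1}^\pj \qquad (p \text{ odd}),\]
which I plan to prove as follows: given $x = \pi(\tilde{x})$ with every projective-tope coefficient even, the condition translates to $(1+c)\tilde{x} \in 2\ZZ[\Tcal]$, so one can form the symmetric lift $\tilde{x}' := \tfrac{1}{2}(1+c)\tilde{x}$. Since $c$ preserves $\Pcal_p$ and $\Pcal_p$ is saturated in $\ZZ[\Tcal]$ (being the annihilator of the torsion-free $\Pcal^{p-1}$), one deduces $\tilde{x}' \in \Pcal_p$; the key identity then forces its class $\overline{\tilde{x}'} \in \Acal_p$ to satisfy $\overline{\tilde{x}'} = -\overline{\tilde{x}'}$, so $\tilde{x}' \in \Pcal_{p+1}$ and $x = \pi(\tilde{x}') \in \Pcal_{p+1}^\pj$. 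The main technical obstacle is precisely this saturation step: it is the device that converts coefficient information in the projective setting into membership at a deeper stage of the integral filtration, and it is what makes the integral-versus-mod-$2$ comparison in the odd case go through cleanly.
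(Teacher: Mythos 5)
Your proof is correct, and for the crux of the odd case it takes a genuinely different and arguably cleaner route than the paper's.

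You and the paper agree on the preliminary reductions: $B_p = 0$ when $p$ is even (both use the observation that the image of an antisymmetric element is $2$-torsion in a torsion-free group, which you package as the identity $\tilde{a}_p \circ c = (-1)^p\tilde{a}_p$ — a fact the paper uses only implicitly, writing ``for parity reason''); $2\Acal_p\subseteq B_p$ when $p$ is odd, so that $\Acal_p^\pj$ is a $\Ztwo$-vector space; and the identification $\overline{\Pcal}_p^\pj/\overline{\Pcal}_{p+1}^\pj\cong\overline{\OS}_p(\underline{M};\Ztwo)$, which both you and the paper import from Theorem~\ref{mainthm:maps} together with~\cite[Prop.~5.7]{RRS2}. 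Where you diverge is in showing that the surjection $\Acal_p^\pj\twoheadrightarrow\overline{\OS}_p$ is actually injective. The paper argues by a spanning count: after fixing an ordering of $E$, it exhibits, for each NBC-set of size $p$ avoiding the minimum element, a specially constructed prefix chain, and a delicate pairing argument shows that these already span $\Pcal_p^\pj/\Pcal_{p+1}^\pj$; since the number of such NBC-sets equals $\dim\overline{\OS}_p$, the dimensions match. You instead prove the clean saturation statement $\Pcal_p^\pj\cap 2\ZZ[\Tcal^\pj]\subseteq\Pcal_{p+1}^\pj$ by symmetrizing an integral lift: $\tilde{x}'=\tfrac12(1+c)\tilde{x}$ lands in $\Pcal_p$ because $c$ preserves $\Pcal_p$ and $\Pcal_p$ is a saturated submodule of $\ZZ[\Tcal]$ (being the annihilator of $\Pcal^{p-1}$), and then the sign identity applied to the $c$-invariant element $\tilde{x}'$ kills $\tilde{a}_p(\tilde{x}')$ when $p$ is odd. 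Both arguments are valid; yours avoids the NBC bookkeeping entirely and makes the mechanism — the interaction of the antipodal involution with the grading — explicit, which is the conceptual reason the odd and even cases behave differently. One small presentational note: it is worth stating explicitly that the $(-1)^p$ identity extends from prefix chains to all of $\Pcal_p$ because $c$ permutes the prefix chains (via $c(\gamma_{\Fcal,\vv,p})=\gamma_{\Fcal,\vv+\ff_d,p}$) and $\tilde{a}_p$ is well-defined on their span by Proposition~\ref{prop:gamma_epsilon}; you gesture at this but it deserves a full sentence.
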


\begin{proof}
    For $p$ even, given $\gamma=\sum \gamma_{\Fcal,\vv,p}\in\Theta\cap\Pcal_p$ (possibly with repeated summands), we have $2\gamma=\sum [\gamma_{\Fcal,\vv,p}-\gamma_{\Fcal,-\vv,p}]$ by the anti-symmetry of elements in $\Theta$. Hence $2\cdot \tilde{a}_p(\gamma)=\sum [\tilde{a}_p(\gamma_{\Fcal,\vv,p})-\tilde{a}_p(\gamma_{\Fcal,-\vv,p})]=0$, but as $B_p$ is a subgroup of the free abelian group $\Acal_p$, $\tilde{a}_p(\gamma)=0$. In particular, $B_p=\{0\}$ and $\Acal_p^\pj = \Acal_p/B_p\cong \Acal_p$.

    For $p$ odd (which is the assumption for the rest of this proof), we also have that $\gamma_{\Fcal,\vv,p}-\gamma_{\Fcal,-\vv,p}\in\Theta\cap\Pcal_p$ for any prefix chain $\gamma_{\Fcal,\vv,p}\in\Pcal$, except that $\tilde{a}_p(\gamma_{\Fcal,\vv,p}-\gamma_{\Fcal,-\vv,p})=2\cdot \tilde{a}_p(\gamma_{\Fcal,\vv,p})\in B_p$ for parity reason in the definition of $\tilde{a}_p$. Since the prefix chains generate $\Pcal_p$, $2\cdot \Acal_p\subset B_p$ and $\Acal_p^\pj$ is a quotient of $\Acal_p/2\Acal_p$, in particular a vector space over $\Ztwo$.
  
    The dimension of $\Acal_p^\pj$ over $\Ztwo$ is at least that of $\overline{\OS}_p(\underline{M}; \Ztwo)$: tensoring the short exact sequence $0\rightarrow \Pcal_{p+1}^\pj\rightarrow \Pcal_p^\pj\rightarrow \Acal_p^\pj\rightarrow 0$ with $\Ztwo$ gives $\overline{\Pcal_{p+1}^\pj}\rightarrow \overline{\Pcal_p^\pj} \rightarrow \Acal_p^\pj\rightarrow 0$. Meanwhile, by combining Theorem \ref{mainthm:maps} with \cite[Proposition 5.7]{RRS2}, we find that $\overline{\Pcal_p^\pj}/\overline{\Pcal_{p+1}^\pj} \cong \overline{\OS}_p(\underline{M}; \Ztwo)$.

    To see that this is exactly the dimension of $\Acal_p^{\pj}$, fix an arbitrary ordering of $E$, we claim that the projection of NBC-chains indexed by NBC-sets that do not contain the minimum element $i_1\in E$ are sufficient to span $\Pcal_p^\pj/\Pcal_{p+1}^\pj$. Since the number of such NBC-sets of size $p$ is the dimension of $\overline{\OS}_p(\underline{M}; \Ztwo)$, the tensor product above is actually exact. Let $\{i_1,i_2,\ldots,i_p\}$ be a NBC-set ($i_2,\ldots,i_p$ are not necessarily the next $p-1$ smallest elements) and $\Fcal$ be the respective NBC-flag. By reorientation if necessary, without loss of generality we consider $\gamma_{\Fcal,{\bf 0},p}=\gamma_{\Fcal,{\bf 0},p-1}-\gamma_{\Fcal,\dd_p,p-1}$, whose image in $\ZZ[\Tcal^\pj]$ is equal to that of $\gamma':=\gamma_{\Fcal,{\bf 0},p-1}-\gamma_{\Fcal,{\bf 1}-\ff_p,p-1}$.
    We have $\gamma'\in\Pcal_p$: let $h_{j_1}\ldots h_{j_{p-1}}\in\Pcal^{p-1}$, we may assume $j_k\in F_k\setminus F_{k-1}$ for every $k$, or otherwise $h_{j_1}\ldots h_{j_{p-1}}(\gamma_{\Fcal,{\bf 0},p-1})=h_{j_1}\ldots h_{j_{p-1}}(\gamma_{\Fcal,{\bf 1}-\ff_p,p-1})=0$; now each tope $T_\vv$ in the support of $\gamma_{\Fcal,{\bf 0},p-1}$ can be paired up with $T_{\vv+{\bf 1}-\ff_p}$ of $\gamma_{\Fcal,{\bf 1}-\ff_p,p-1}$, which have the same evaluation under $h_{j_1}\ldots h_{j_{p-1}}$ and of same coefficients in the respective chains, hence $h_{j_1}\ldots h_{j_{p-1}}(\gamma')=0$ as well. Therefore, we may write $\gamma'$ uniquely as a linear combination of NBC-chains, which we claim those indexed by NBC-sets that contains $i_1$ do not appear: let $S\ni i_1$ be such a NBC-set, consider $h^S:=\prod_{i\in S} h_i$, the same tope pairing argument shows that $h^S(\gamma')=0$, which concludes the argument by Corollary~\ref{coro:NBC_Pp_basis}.4
\end{proof}

\bibliographystyle{plain}
\bibliography{FF}

\begin{thebibliography}{10}

\bibitem{ambrosimanzaroli}
Emiliano Ambrosi and Matilde Manzaroli.
\newblock Betti numbers of real semistable degenerations via real logarithmic
  geometry.
\newblock {\em arXiv:2211.12134}, 2022.

\bibitem{AK06}
Federico Ardila and Caroline~J. Klivans.
\newblock The {B}ergman complex of a matroid and phylogenetic trees.
\newblock {\em J. Combin. Theory Ser. B}, 96(1):38--49, 2006.

\bibitem{Bjo92}
Anders Bj\"{o}rner.
\newblock The homology and shellability of matroids and geometric lattices.
\newblock In {\em Matroid applications}, volume~40 of {\em Encyclopedia Math.
  Appl.}, pages 226--283. Cambridge Univ. Press, Cambridge, 1992.

\bibitem{BLSWZ}
Anders Bj\"{o}rner, Michel Las~Vergnas, Bernd Sturmfels, Neil White, and
  G\"{u}nter~M. Ziegler.
\newblock {\em Oriented matroids}, volume~46 of {\em Encyclopedia of
  Mathematics and its Applications}.
\newblock Cambridge University Press, Cambridge, second edition, 1999.

\bibitem{BZ92}
Anders Bj\"{o}rner and G\"{u}nter~M. Ziegler.
\newblock Combinatorial stratification of complex arrangements.
\newblock {\em J. Amer. Math. Soc.}, 5(1):105--149, 1992.

\bibitem{Cord02}
R.~Cordovil.
\newblock A commutative algebra for oriented matroids.
\newblock volume~27, pages 73--84. 2002.
\newblock Geometric combinatorics (San Francisco, CA/Davis, CA, 2000).

\bibitem{Coron2025}
Basile Coron.
\newblock Operadic {K}azhdan-{L}usztig-{S}tanley theory.
\newblock {\em Int. Math. Res. Not. IMRN}, (3):Paper No. rnaf009, 28, 2025.

\bibitem{Degtyarev}
A~I Degtyarev and V~M Kharlamov.
\newblock Topological properties of real algebraic varieties: du cot{\'{e}} de
  chez rokhlin.
\newblock {\em Russian Mathematical Surveys}, 55(4):735--814, aug 2000.

\bibitem{Denham_OS}
Graham Denham.
\newblock The {O}rlik-{S}olomon complex and {M}ilnor fibre homology.
\newblock volume 118, pages 45--63. 2002.
\newblock Arrangements in Boston: a Conference on Hyperplane Arrangements
  (1999).

\bibitem{DDP_Sal}
Galen Dorpalen-Barry, Dan Dugger, and Nicholas Proudfoot.
\newblock Salvetti complexes for conditional oriented matroids.
\newblock {\em arXiv:2507.06365}, 2025.

\bibitem{DPW_COM}
Galen Dorpalen-Barry, Nicholas Proudfoot, and Jidong Wang.
\newblock Equivariant cohomology and conditional oriented matroids.
\newblock {\em Int. Math. Res. Not. IMRN}, (11):9292--9322, 2024.

\bibitem{FolkmanLawrence}
Jon Folkman and Jim Lawrence.
\newblock Oriented matroids.
\newblock {\em J. Combin. Theory Ser. B}, 25(2):199--236, 1978.

\bibitem{GZ83}
Curtis Greene and Thomas Zaslavsky.
\newblock On the interpretation of {W}hitney numbers through arrangements of
  hyperplanes, zonotopes, non-{R}adon partitions, and orientations of graphs.
\newblock {\em Trans. Amer. Math. Soc.}, 280(1):97--126, 1983.

\bibitem{IKMZ}
Ilia Itenberg, Ludmil Katzarkov, Grigory Mikhalkin, and Ilia Zharkov.
\newblock Tropical homology.
\newblock {\em Math. Ann.}, 374(1-2):963--1006, 2019.

\bibitem{Kalinin}
IO~Kalinin.
\newblock Cohomology of real algebraic varieties.
\newblock {\em Journal of Mathematical Sciences}, 131(1):5323--5344, 2005.

\bibitem{Kawahara}
Yukihito Kawahara.
\newblock On matroids and {O}rlik-{S}olomon algebras.
\newblock {\em Ann. Comb.}, 8(1):63--80, 2004.

\bibitem{Mose17}
Daniel Moseley.
\newblock Equivariant cohomology and the {V}archenko-{G}elfand filtration.
\newblock {\em J. Algebra}, 472:95--114, 2017.

\bibitem{OS}
Peter Orlik and Louis Solomon.
\newblock Combinatorics and topology of complements of hyperplanes.
\newblock {\em Invent. Math.}, 56(2):167--189, 1980.

\bibitem{OT_book}
Peter Orlik and Hiroaki Terao.
\newblock {\em Arrangements of hyperplanes}, volume 300 of {\em Grundlehren der
  mathematischen Wissenschaften [Fundamental Principles of Mathematical
  Sciences]}.
\newblock Springer-Verlag, Berlin, 1992.

\bibitem{Paris93}
Luis Paris.
\newblock Universal cover of {S}alvetti's complex and topology of simplicial
  arrangements of hyperplanes.
\newblock {\em Trans. Amer. Math. Soc.}, 340(1):149--178, 1993.

\bibitem{Pro06}
Nicholas Proudfoot.
\newblock The equivariant {O}rlik-{S}olomon algebra.
\newblock {\em J. Algebra}, 305(2):1186--1196, 2006.

\bibitem{Quillen}
D.~Quillen.
\newblock On the associated graded ring of a group ring.
\newblock {\em J. Algebra}, 10:411--418, 1968.

\bibitem{RRS}
Johannes Rau, Arthur Renaudineau, and Kris Shaw.
\newblock Real phase structures on matroid fans and matroid orientations.
\newblock {\em Journal of the London Mathematical Society}, 106(4):3687--3710,
  2022.

\bibitem{RRS2}
Johannes Rau, Arthur Renaudineau, and Kris Shaw.
\newblock Real phase structures on tropical varieties and patchworks in higher
  codimension.
\newblock {\em Int. Math. Res. Not. IMRN}, (20):Paper No. rnaf312, 36, 2025.

\bibitem{RS}
Arthur Renaudineau and Kris Shaw.
\newblock Bounding the {B}etti numbers of real hypersurfaces near the tropical
  limit.
\newblock {\em Ann. Sci. \'{E}c. Norm. Sup\'{e}r. (4)}, 56(3):945--980, 2023.

\bibitem{Sal87}
M.~Salvetti.
\newblock Topology of the complement of real hyperplanes in {${\bf C}^N$}.
\newblock {\em Invent. Math.}, 88(3):603--618, 1987.

\bibitem{GV87}
A.~N. Varchenko and I.~M. Gelfand.
\newblock Heaviside functions of a configuration of hyperplanes.
\newblock {\em Funktsional. Anal. i Prilozhen.}, 21(4):1--18, 96, 1987.

\bibitem{Var93}
Alexandre Varchenko.
\newblock Bilinear form of real configuration of hyperplanes.
\newblock {\em Adv. Math.}, 97(1):110--144, 1993.

\bibitem{Wil78}
George Wilson.
\newblock Hilbert's sixteenth problem.
\newblock {\em Topology}, 17(1):53--73, 1978.

\bibitem{Zhar_OS}
Ilia Zharkov.
\newblock The {O}rlik-{S}olomon algebra and the {B}ergman fan of a matroid.
\newblock {\em J. G\"{o}kova Geom. Topol. GGT}, 7:25--31, 2013.

\end{thebibliography}

\medskip

Department of Mathematics, University of Oslo, Norway

Email address: \url{krisshaw@math.uio.no}\\

Department of Applied Mathematics, National Yang Ming Chiao Tung University, Taiwan

Email address: \url{chyuen@math.nctu.edu.tw}\\

\end{document}